\title{Freiman's theorem in an arbitrary nilpotent group}
\date{}
\author{Matthew Tointon}
\numberwithin{equation}{section}
\newtheorem{prop}{Proposition}[section]
\newtheorem{theorem}[prop]{Theorem}
\newtheorem{lemma}[prop]{Lemma}
\newtheorem{corollary}[prop]{Corollary}
\theoremstyle{definition}
\newtheorem{definition}[prop]{Definition}
\theoremstyle{remark}
\newtheorem{remark}[prop]{Remark}
\newtheorem{example}[prop]{Example}
\newtheorem*{remark*}{Remark}
\newtheorem*{remarks*}{Remarks}
\newcommand{\N}{\mathbb{N}}
\newcommand{\Z}{\mathbb{Z}}
\newcommand{\F}{\mathbb{F}}
\newcommand{\rank}{\text{rank }}
\begin{document}
\maketitle
\begin{center}
\scriptsize{\textit{Department of Pure Mathematics and Mathematical Statistics, Centre for Mathematical Sciences, University of Cambridge, Wilberforce Road, Cambridge CB3 0WB, United Kingdom}}\\
\scriptsize{\textit{email: M.Tointon@dpmms.cam.ac.uk}}
\end{center}
\begin{abstract}We prove a Freiman--Ruzsa-type theorem valid in an arbitrary nilpotent group. Specifically, we show that a $K$-approximate group $A$ in an $s$-step nilpotent group $G$ is contained in a coset nilprogression of rank at most $K^{O_s(1)}$ and cardinality at most $\exp(K^{O_s(1)})|A|$. To motivate this, we give a direct proof of Breuillard and Green's analogous result for torsion-free nilpotent groups, avoiding the use of Mal'cev's embedding theorem.
\end{abstract}
\tableofcontents
\section{Introduction}
In various situations in mathematics, considering approximate analogues of precise algebraic properties can lead to the objects they define becoming far more widely applicable. An area that has received considerable attention in that regard in recent years is the study of \emph{approximate groups}. Originating in additive combinatorics, these objects have increasingly been applied to obtain results in a diverse array of other fields, such as number theory, differential geometry and theoretical computer science; see, for example, \cite{arb.group,app.grps} and the references therein for details on some of these applications.

Approximate group theory really began with the celebrated theorem of Freiman \cite{freiman} describing the structure of sets of integers that are approximately closed under addition. Specifically, writing
\[
A+B:=\{a+b:a\in A,b\in B\},
\]
Freiman proved that if $A$ is a finite subset of $\Z$ satisfying
\begin{equation}\label{eq:sm.doub}
|A+A|\le K|A|
\end{equation}
then $A$ is contained in a \emph{progression} $P$ of \emph{rank} $r\ll_K1$ and cardinality $|P|\ll_K|A|$, which is to say a set of the form
\begin{equation}\label{eq:prog}
P=\{x_0+l_1x_1+\ldots+l_rx_r:0\le l_i\le L_i\}.
\end{equation}
Ruzsa \cite{ruzsa.Z} subsequently gave a simplified proof of this result.

In a more general abelian group $G$ it still makes sense to ask for a classification of sets satisfying (\ref{eq:sm.doub}). However, Freiman's theorem cannot hold exactly as stated for $\Z$, since if $G$ is a finite abelian group of sufficiently high rank then $G$ itself satisfies (\ref{eq:sm.doub}) for every $K\ge1$, but it cannot be contained in any set $P$ of the form (\ref{eq:prog}) with $r\ll1$.

Nonetheless, Green and Ruzsa \cite{green-ruzsa} showed that this is essentially the only way in which Freiman's theorem can fail in an abelian group. Indeed, they showed that if $G$ is an arbitrary abelian group and $A\subset G$ satisfies (\ref{eq:sm.doub}) then $A$ is contained inside a set $H+P$, where $H$ is a subgroup, $P$ is of the form (\ref{eq:prog}) and $|H+P|\ll_K|A|$. A set of this form is called a \emph{coset progression}. Their result is stated precisely, with explicit bounds, as Theorem~\ref{thm:ab.freiman} in the present work. The best bounds currently available for a result of this type in an arbitrary abelian group are due to Sanders \cite{sanders}.

It is natural to ask whether similar results hold in non-abelian groups. To do so requires some more notation. If $A$ and $B$ are subsets of a multiplicative group then we denote by $AB$ the set $\{ab:a\in A,b\in B\}$ and by $A^{-1}$ the set $\{a^{-1}:a\in A\}$. We define $A^1:=A$ and $A^n:=A(A^{n-1})$. Similarly, if $A$ is a subset of an (additive) abelian group then we define $-A:=\{-a:a\in A\}$, $1A:=A$ and $nA:=A+(n-1)A$.

The most na\"ive analogue of condition (\ref{eq:sm.doub}) for a non-abelian group would be the condition $|AA|\le K|A|$. However, it turns out (see \cite{tao.product.set} for details) that the appropriate analogue in the non-abelian setting is to require that $A$ is an \emph{approximate group}, defined as follows.
\begin{definition}[Approximate group]\label{def:app.grp}
Let $G$ be a (multiplicative) group and let $K\ge1$. A finite set $A\subset G$ is called a \emph{$K$-approximate (sub)group} if
\begin{enumerate}
\renewcommand{\labelenumi}{(\roman{enumi})}
\item it is symmetric, which is to say that $A^{-1}=A$, and contains the identity;
\item there is a symmetric subset $X$ with $|X|\le K$ such that $A^2\subset XA$.
\end{enumerate}
If $A$ generates a nilpotent subgroup of step $s$ then we say that $A$ is an \emph{$s$-step $K$-approximate group}.
\end{definition}
\begin{remark}We may assume, without loss of generality, that $X\subset A^3$. Indeed, we may certainly assume that $X$ is a minimal symmetric set satisfying $A^2\subset XA$, and in that case for each $x\in X$ there exist $a_1,a_2,a_3$ such that $a_1a_2=xa_3$, and so $x=a_1a_2a_3^{-1}\in A^3$.
\end{remark}
\begin{remark}
When considering a $K$-approximate group we may (and do) assume that $K$ is an integer. Moreover, we may (and do) assume that $K\ge2$, since if $K=1$ then $A$ is a genuine finite subgroup and our results are trivial. One consequence of this that is convenient from a notational perspective is that if $p$ is a polynomial then $p(K)\le K^{O_p(1)}$, and so, for example, bounds of the form $O(K^{O(1)})$ may be rewritten as simply $K^{O(1)}$.
\end{remark}
Remarkably, Breuillard, Green and Tao \cite{arb.group} have shown that, with a suitable modification of the notion of a progression, if $A$ is a $K$-approximate group inside an arbitrary group\footnote{Their result is also valid in the more general setting of \emph{local} groups.} then $A$ can still be described in terms of a subgroup and a progression. Specifically, they show that $A^4$ contains a set $HP$ satisfying $|HP|\gg_K|A|$, where $H$ is a normal subgroup of a subgroup $G_0$ of $G$ and the image of $P$ in the quotient $G_0/H$ is a \emph{nilprogression} of rank and step $O_K(1)$, defined as follows.
\begin{definition}[Nilprogression]\label{def:nilprog}
Let $x_1,\ldots,x_r$ be elements that generate an $s$-step nilpotent group and let $L=(L_1,\ldots,L_r)$ be a vector of positive integers. Then the set of all products in the $x_i$ and their inverses in which each $x_i$ and its inverse appear at most $L_i$ times between them is called a \emph{nilprogression} of rank $r$, step $s$ and side lengths $L_1,\ldots,L_r$, and is denoted $P^*(x_1,\ldots,x_r;L)$.
\end{definition}
The proof of this result is long and far from elementary, and the bounds it gives are ineffective, and so it is of interest to have shorter proofs and better bounds for particular classes of $G$.

A fair amount is known in the case in which $G$ is a nilpotent group. For example, an old argument of Ruzsa \cite{ruzsa.torsion} can easily be adapted to show that if $G$ is $s$-step nilpotent and every element of $G$ has order at most $m$ then a $K$-approximate group $A$ must be contained inside a genuine subgroup of cardinality at most $O_{K,m,s}(|A|)$. On the other hand, Breuillard and Green \cite{tor.free.nilp} have shown that if $A$ is a subset of a torsion-free nilpotent group then $A$ is `controlled'\footnote{The notion of control is standard in approximate group theory and gives a way of describing one set in terms of another. See \cite{tor.free.nilp,tao.product.set} for details.} by a set $P(x_1,\ldots,x_r;L)$ called a \emph{nilpotent progression}, which we define in Definition \ref{def:nilp.prog}. The notion of a nilpotent progression is not equivalent to that of a nilprogression, but for the purposes of this paper they can be thought of as roughly equivalent; see Appendix \ref{sec:rough.equiv.prog} for details.

When a nilpotent group $G$ contains elements of finite but unbounded order, however, the question has remained open. The present work resolves this question. Specifically, we prove the following result.
\begin{theorem}\label{thm:freiman.p-group}
Let $G$ be an $s$-step nilpotent group. Suppose that $A\subset G$ is a $K$-approximate group. Then there exist a subgroup $H$ of $G$, normalised by $A$; an integer $k\le K^{O_s(1)}$; elements $x_1,\ldots,x_k$ of $G$; and lengths $L=(L_1,\ldots,L_k)$ such that the nilprogression $P^*=P^*(x_1,\ldots,x_k;L)$ and the nilpotent progression $P=P(x_1,\ldots,x_k;L)$ satisfy
\[
A\subset HP^*\subset HP\subset A^{K^{O_s(1)}}.
\]
\end{theorem}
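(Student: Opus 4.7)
My plan is to proceed by induction on the nilpotent step $s$. The base case $s=1$ is precisely the Green--Ruzsa theorem in an arbitrary abelian group (stated above as Theorem~\ref{thm:ab.freiman}), because in the abelian setting a coset progression is the same object as a step-$1$ coset nilprogression.

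For the inductive step, set $Z := \gamma_s(G)$. Since $G$ has step $s$, the subgroup $Z$ lies in the centre of $G$ and is in particular abelian. Writing $\pi \colon G \to G/Z$ for the quotient map, the image $\pi(A)$ is a $K$-approximate subgroup of the $(s-1)$-step nilpotent group $G/Z$, to which the inductive hypothesis applies. This produces a subgroup $\bar H \le G/Z$ normalised by $\pi(A)$, generators $\bar x_1,\ldots,\bar x_{k_0}$ with $k_0 \le K^{O_s(1)}$, and side lengths $L^{(0)}$ satisfying
\[
\pi(A) \subset \bar H\, \bar P^*(\bar x_1,\ldots,\bar x_{k_0};L^{(0)}) \subset \pi(A)^{K^{O_s(1)}}.
\]
I then lift each $\bar x_i$ to some $x_i \in A^{K^{O_s(1)}}$. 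To handle the central ``fibre'', I pick a suitable $M = K^{O_s(1)}$ (large enough to absorb the central discrepancies produced by the lifting) and examine $B := A^M \cap Z$, which is a $K^{O(1)}$-approximate subgroup of the abelian group $Z$. Applying Theorem~\ref{thm:ab.freiman} to $B$ yields a subgroup $H_Z \le Z$, generators $y_1,\ldots,y_{k_1}$ with $k_1 \le K^{O_s(1)}$, and lengths $L^{(1)}$ such that $B \subset H_Z\, P^*(y_1,\ldots,y_{k_1};L^{(1)}) \subset A^{K^{O_s(1)}}$. Since $H_Z \subset Z$ is central, it is automatically normalised by $A$, so I take $H := H_Z$ and declare $P^*$ to be the nilprogression generated by the combined list $x_1,\ldots,x_{k_0},y_1,\ldots,y_{k_1}$ with the concatenated side lengths.

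The main obstacle, and what I expect to dominate the technical work of the paper, is establishing the reverse containment $HP^* \subset A^{K^{O_s(1)}}$, together with sandwiching the nilpotent progression $P$ between $HP^*$ and $A^{K^{O_s(1)}}$. The delicate point is that the commutators $[x_i,x_j]$ lie only in $\gamma_2(G)$, not in $Z$, so when a general word in the generators of $P^*$ is rearranged into a normal form, the resulting correction terms are spread across \emph{every} factor $\gamma_j(G)/\gamma_{j+1}(G)$ of the lower central series, and not only in the abelian layer $Z$ that my construction targets directly. Each such correction must be absorbed into either the progression or $H_Z$ without inflating the rank or the side lengths beyond $K^{O_s(1)}$. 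My approach would be to interleave the induction with commutator calculus---since the lower central series has length only $s$, the cumulative blow-up from moving corrections into $Z$ is controlled by an $s$-fold iteration of polynomial-in-$K$ growth---and then to appeal to the rough equivalence between $P^*$ and $P$ from Appendix~\ref{sec:rough.equiv.prog} to pass between the nilprogression and nilpotent progression forms demanded by the theorem statement without further loss.
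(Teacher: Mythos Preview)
Your outline has two gaps that are not merely technical.

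First, you discard $\bar H$. The inductive hypothesis produces a possibly large subgroup $\bar H\le G/Z$, yet you set $H:=H_Z\subset Z$ and never mention $\bar H$ again. With that choice $A\subset HP^*$ already fails: take $G$ to be the Heisenberg group over $\Z/p\Z$ and $A=G$; then Green--Ruzsa on $G/Z\cong(\Z/p\Z)^2$ returns $\bar H=G/Z$ with trivial progression, your $H_Z=Z$, and $HP^*=Z\subsetneq A$. At a minimum $H$ has to absorb $\pi^{-1}(\bar H)$.

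Second, even after repairing that, the reverse containment breaks. A word $w$ in the lifts $x_i$ with each $x_i$ used at most $L_i^{(0)}$ times satisfies only $\pi(w)\in\pi(A)^{K^{O_s(1)}}$, so $w=az$ with $a\in A^{K^{O_s(1)}}$ and $z\in Z$. Nothing forces $z\in A^M\cap Z$ for any $M=K^{O_s(1)}$: the word $w$ itself lies only in $A^{K^{O_s(1)}\cdot\sum L_i^{(0)}}$, and $\sum L_i^{(0)}$ is not bounded in terms of $K$. Your proposal to ``absorb the corrections into $H_Z$ or the progression'' is precisely the missing argument; commutator calculus alone does not bound these central errors, because they are not commutators of short elements but discrepancies accumulated over words of length $\sum L_i$.

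The paper avoids this altogether by inducting differently. It quotients by $\Gamma_2(G)$ rather than $\gamma_s(G)$, and instead of lifting a single nilprogression it uses the splitting lemma and its converse (Lemmas~\ref{lem:splitting} and~\ref{lem:splitting.converse}) to place $A$ inside a bounded product $A_1\cdots A_r$ of $K^{O(1)}$-approximate groups each of step at most $s-1$, modulo a normal subgroup $N\subset A^{K^{O_s(1)}}$ (Proposition~\ref{prop:p.grp.inside.prod.of.lower.step}). The genuinely new work---Sections~\ref{sec:img.of.multi.hom} and~\ref{sec:p.group.proof}---goes into constructing this $N$: the finite subgroup in the Green--Ruzsa decomposition of $G/\Gamma_2(G)$ produces a piece $X_0$ that is \emph{not} of lower step, and one must show that $\langle[X_0,\ldots,X_0]_s\rangle$ sits inside a normal subgroup contained in $A^{K^{O_s(1)}}$. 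Your scheme never isolates an object playing the role of $X_0$ and so has no mechanism for manufacturing $N$; the vague appeal to ``$s$-fold iteration of polynomial-in-$K$ growth'' does not substitute for this.
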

\begin{remark}
In particular, by Lemma~\ref{lem:app.grp.fundamental}, $|HP^*|\le|HP|\le\exp(K^{O_s(1)})|A|$.
\end{remark}
An immediate corollary of Theorem \ref{thm:freiman.p-group} is the following refinement of the Breuillard--Green theorem.
\begin{theorem}\label{thm:freiman.tor.free.nilp}
Let $G$ be torsion-free $s$-step nilpotent group. Suppose that $A\subset G$ is a $K$-approximate group. Then there exist $k\le K^{O_s(1)}$, elements $x_1,\ldots,x_k$ of $G$ and lengths $L=(L_1,\ldots,L_k)$ such that the nilprogression $P^*=P^*(x_1,\ldots,x_k;L)$ and the nilpotent progression $P=P(x_1,\ldots,x_k;L)$ satisfy
\[
A\subset P^*\subset P\subset A^{K^{O_s(1)}}.
\]
\end{theorem}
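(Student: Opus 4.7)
The plan is to derive Theorem~\ref{thm:freiman.tor.free.nilp} directly from Theorem~\ref{thm:freiman.p-group}, since the two statements differ only by the presence of the subgroup $H$. Applying Theorem~\ref{thm:freiman.p-group} to the $K$-approximate group $A\subset G$ produces a subgroup $H$ of $G$ normalised by $A$, an integer $k\le K^{O_s(1)}$, elements $x_1,\ldots,x_k\in G$, and a vector of lengths $L$ such that
\[
A\subset HP^*\subset HP\subset A^{K^{O_s(1)}},
\]
where $P^*=P^*(x_1,\ldots,x_k;L)$ and $P=P(x_1,\ldots,x_k;L)$.

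The main step would be to show that the subgroup $H$ must be trivial. Since the empty product belongs to the nilprogression $P^*$, the identity lies in $P^*$, and consequently $H=He\subset HP^*\subset A^{K^{O_s(1)}}$. Because $A$ is finite, so is $A^{K^{O_s(1)}}$, and therefore $H$ is a finite subgroup of the torsion-free nilpotent group $G$; the only such subgroup is $\{e\}$. Substituting $H=\{e\}$ into the chain of inclusions yields the desired conclusion $A\subset P^*\subset P\subset A^{K^{O_s(1)}}$.

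There is essentially no obstacle in this deduction beyond invoking Theorem~\ref{thm:freiman.p-group}: the heavy lifting has already been done there. The only ingredient specific to the torsion-free setting is the elementary observation that a finite subgroup of a torsion-free group must be trivial, together with the trivial remark that $e\in P^*$ so that $H$ is indeed trapped inside the finite set $A^{K^{O_s(1)}}$.
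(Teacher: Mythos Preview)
Your deduction is correct, and the paper itself acknowledges immediately after stating Theorem~\ref{thm:freiman.tor.free.nilp} that it is ``an immediate corollary of Theorem~\ref{thm:freiman.p-group}''. Your argument spells out precisely this corollary: the identity lies in $P^*$, so $H\subset A^{K^{O_s(1)}}$ is finite, hence trivial in a torsion-free group.

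That said, the paper's written-out proof of Theorem~\ref{thm:freiman.tor.free.nilp} (at the end of Section~\ref{sec:ind.tor.free}) takes a genuinely different, direct route that does \emph{not} invoke Theorem~\ref{thm:freiman.p-group}. It first uses Corollary~\ref{cor:tor.free.inside.prod.of.ab} to place $A$ inside a product $A_1\cdots A_r$ of abelian $K^{O_s(1)}$-approximate groups with $r\le K^{O_s(1)}$; then applies the Green--Ruzsa theorem (Theorem~\ref{thm:ab.freiman}) to each $A_j$ to obtain abelian progressions (no subgroup $H$ appears, since $G$ is torsion free); concatenates the generators and lengths to form a single ordered progression containing $A$; and finally invokes the rough equivalence (\ref{eq:equiv.of.progs}) of ordered, nil-, and nilpotent progressions. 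The paper is explicit that this direct proof is presented for motivational reasons: it isolates the torsion-free argument in a clean form before the complications of finite subgroups arise in the general case. Your route is logically shorter once Theorem~\ref{thm:freiman.p-group} is available, but it defers all the work to that harder theorem; the paper's direct proof is self-contained in the torsion-free setting and serves as a template for the general argument.
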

\begin{remark}
In particular, by Lemma~\ref{lem:app.grp.fundamental}, $|P^*|\le|P|\le\exp(K^{O_s(1)})|A|$.
\end{remark}
\begin{remark}It follows from \cite{nilp.dim.lemma} that the group generated by $A$ in Theorem \ref{thm:freiman.tor.free.nilp} is of nilpotency class at most $O_K(1)$, and so the bounds in Theorem \ref{thm:freiman.tor.free.nilp} can be taken to be independent of $s$ (though they will no longer be polynomial in $K$).
\end{remark}
\begin{remark}In principle it should be possible to reduce the rank $k$ of the nilprogression in Theorems \ref{thm:freiman.p-group} and \ref{thm:freiman.tor.free.nilp} to $6\log_2K$, as in \cite[Theorem 2.12]{arb.group}, although this would be at the expense of having $A$ covered by $O_K(1)$ translates of $HP^*$ rather than just one. We do not pursue this matter in the present work.
\end{remark}
Many of the arguments we use to prove Theorem \ref{thm:freiman.p-group} are considerably simpler under the assumption that $G$ is torsion free, and in that case they yield a very different proof of Theorem \ref{thm:freiman.tor.free.nilp} from the argument of Breuillard and Green. Therefore, although Theorem \ref{thm:freiman.tor.free.nilp} can be deduced directly from Theorem \ref{thm:freiman.p-group}, before proving Theorem \ref{thm:freiman.p-group} we present a direct proof of Theorem \ref{thm:freiman.tor.free.nilp}. This will motivate much of what comes later, as well as making the arguments easier to digest initially.

The starting point for the proofs of Theorems \ref{thm:freiman.p-group} and \ref{thm:freiman.tor.free.nilp} is the so-called \emph{splitting lemma} of Tao \cite[Lemma 7.7]{tao.product.set}, which we present in Section~\ref{sec:tools}. If an approximate group $A$ lies inside a group $G$ with normal subgroup $N$, this lemma essentially allows one to express $A$ in terms of an approximate subgroup $C$ of $G/N$ and an approximate subgroup $B$ of $N$.

As was pointed out in \cite{tao.product.set}, this suggests an inductive approach to the study of nilpotent approximate groups, since in the case of a nilpotent group $G$, taking $N$ to be the commutator subgroup of $G$ would mean that both $B$ and $C$ were nilpotent of lower step. However, a difficulty in applying the splitting lemma, also identified in \cite{tao.product.set}, is that it gives $A\subset\phi(C)B$, where $\phi$ is a certain `approximate homomorphism' from $G/N$ to $G$, and such approximate homomorphisms seem to be difficult to classify.

With this in mind, the principal combinatorial idea contained in the present work is to be found in Proposition \ref{prop:prod.of.approx.grps.tor.free}. In this proposition, in the torsion-free case we are able to use the properties of $\phi$, as well as a converse to the splitting lemma that is also found in \cite{tao.product.set}, to express $A$ in terms of a product of boundedly many approximate groups of lower step, without ever having to describe $\phi$ in detail. A fairly standard application of an argument of Chang then allows us to place $A$ inside a product $A_1\cdots A_r$ of $K^{O(1)}$-approximate groups of lower step, in which $A_i\subset A^{O(1)}$ and $r\le K^{O(1)}$.

Once we have this statement the problem basically becomes an algebraic one. In particular, in the torsion-free case it is straightforward to proceed by induction to a situation in which the $A_i$ are all abelian. The Green--Ruzsa theorem then allows us to assume that $A$ is contained inside a product of abelian progressions, and it is then a conceptually easy, if technically tedious, matter to place $A$ inside a nilpotent progression. The details are given in Section~\ref{sec:ind.tor.free} and Appendix \ref{sec:rough.equiv.prog}.

Once we introduce the possibility of torsion into the problem the need to deal with finite subgroups makes things somewhat trickier. In Section \ref{sec:strategy} we describe our strategy for dealing with the resultant issues, before concluding the argument in Sections \ref{sec:img.of.multi.hom}-\ref{sec:conclusion}.

The arguments of Section \ref{sec:img.of.multi.hom} in particular rely on certain fundamental properties of $p$-groups; we present these properties in Appendix \ref{sec:p.groups}.
\subsection*{Acknowledgements}While this work was carried out the author was supported by an EPSRC doctoral training grant, awarded by the Department of Pure Mathematics and Mathematical Statistics in Cambridge. For much of that time he was also supported by a Bye-Fellowship from Magdalene College, Cambridge, and he is grateful to the college for their generosity. 

It is a pleasure to thank Ben Green for suggesting the problem and for numerous helpful conversations since, and Emmanuel Breuillard, Tom Sanders and an anonymous referee for a number of helpful comments on previous versions of this paper.
\section{Tools from the literature}\label{sec:tools}
In this section we collect together some tools from the literature on approximate groups. Let us first, however, record the following trivial but repeatedly useful observation.
\begin{lemma}\label{lem:app.grp.fundamental}
If $A$ is a $K$-approximate group then $|A^n|\le K^{n-1}|A|$ for every positive integer $n$.
\end{lemma}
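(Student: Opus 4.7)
The plan is to induct on $n$, using the covering property from the definition of a $K$-approximate group iteratively to show that $A^n$ lies inside $X^{n-1}A$ for a fixed symmetric covering set $X$ of size at most $K$.

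Fix a symmetric set $X$ with $|X|\le K$ and $A^2\subset XA$, as furnished by the definition. The base case $n=1$ is immediate, as $|A^1|=|A|=K^0|A|$, and the case $n=2$ follows at once from $|A^2|\le|XA|\le|X||A|\le K|A|$. For the inductive step, assuming $A^n\subset X^{n-1}A$, I would write $A^{n+1}=A^n\cdot A\supset A$ the wrong way and instead multiply on the left by $A$: namely $A^{n+1}=A\cdot A^n$ is not directly useful, so I would proceed by
\[
A^{n+1}=A^2\cdot A^{n-1}\subset XA\cdot A^{n-1}=X\cdot A^n\subset X\cdot X^{n-1}A=X^nA,
\]
where the second-to-last inclusion is the inductive hypothesis. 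Hence $|A^{n+1}|\le|X^n||A|\le K^n|A|$, completing the induction.

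There is no real obstacle here: the whole content is that the symmetric covering set $X$ in the definition is a \emph{single} set (not one depending on the power of $A$ being considered), so iterating the covering relation simply stacks copies of $X$ on the left. The symmetry of $X$ is not even used for this particular statement; it will matter elsewhere in the paper but is unnecessary for the cardinality bound.
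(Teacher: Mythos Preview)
Your proof is correct and is exactly the standard argument; the paper itself does not even write out a proof, merely calling the lemma a ``trivial but repeatedly useful observation''. The inductive chain $A^{n+1}=A^2\cdot A^{n-1}\subset XA\cdot A^{n-1}=X\cdot A^n\subset X^nA$ is precisely what the author has in mind.
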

We now review some elements of abelian approximate group theory. Given a set $A$ in an abelian group, define the \emph{doubling constant} of $A$ to be the quantity $|A+A|/|A|$. The key abelian result is the following generalisation by Green and Ruzsa of Freiman's theorem.
\begin{sloppypar}
\begin{theorem}[Green--Ruzsa \cite{green-ruzsa}]\label{thm:green-ruzsa}Suppose that $A$ is a symmetric subset of an abelian group with doubling constant at most $K$. Then $4A$ contains a coset progression $H+P$ of rank at most $O(K^{O(1)})$ and cardinality at least $\exp(-O(K^{O(1)}))|A|$.
\end{theorem}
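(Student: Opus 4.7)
The plan is to combine four standard ingredients: Plünnecke--Ruzsa sumset estimates, the Ruzsa modelling lemma, Bogolyubov's Fourier-analytic argument and a geometry of numbers reduction via Minkowski's second theorem. Throughout I would exploit symmetry, which gives $4A=2A-2A$, so it suffices to locate a coset progression inside $2A-2A$.

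First, I would use Plünnecke--Ruzsa to convert the hypothesis $|A+A|\le K|A|$ into bounds of the form $|mA-nA|\le K^{m+n}|A|$ for all fixed $m,n$. Next, I would invoke Ruzsa's modelling lemma to produce a subset $A'\subset A$ with $|A'|\ge K^{-O(1)}|A|$ that is Freiman $8$-isomorphic to a subset $B$ of some finite abelian group $\Gamma$ of order $|\Gamma|\le K^{O(1)}|A|$. The point of moving to $\Gamma$ is that a $4$-fold sumset on the model side will pull back to a $4$-fold sumset inside $A$, while Fourier analysis becomes available on the model side.

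Then I would run Bogolyubov's argument in $\Gamma$: the $L^2$-mass of $\widehat{1_B}$ is concentrated on a set $\Lambda\subset\widehat\Gamma$ of size $|\Lambda|\le K^{O(1)}$ consisting of large Fourier coefficients (either by a direct Parseval estimate or, for sharper bounds, by Chang's theorem), and this forces $2B-2B$ to contain the Bohr set $\{x\in\Gamma:|\chi(x)-1|\le\rho\text{ for all }\chi\in\Lambda\}$ for some $\rho\gg K^{-O(1)}$. Pulling this Bohr set back through the Freiman $8$-isomorphism places it inside $2A-2A=4A$, and passing to the group generated by $A'-A'$ produces a genuine subgroup $H$ inside $4A$ about which any eventual progression will be cosetted.

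The main obstacle, and the step that occupies the bulk of the work in \cite{green-ruzsa,sanders}, is extracting an honest coset progression of the required rank and density from this Bohr set. For this I would apply a geometry-of-numbers argument: view the characters in $\Lambda$ as defining a homomorphism from the ambient finite group into a torus of dimension $|\Lambda|$, consider the lattice they cut out, and use Minkowski's second theorem to find successive minima whose associated generalized progression is contained in the Bohr set and has density at least $\exp(-O(K^{O(1)}))|A|$. Tracking constants through the modelling step and checking that the final coset structure survives the pullback is routine but technically delicate; this is where one ultimately pays the exponential factor $\exp(-O(K^{O(1)}))$ in the conclusion.
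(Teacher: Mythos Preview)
The paper does not prove Theorem~\ref{thm:green-ruzsa}; it is quoted as a black box from \cite{green-ruzsa} in Section~\ref{sec:tools} (``Tools from the literature'') and used repeatedly thereafter. So there is no ``paper's own proof'' to compare against: your sketch is a sketch of the original Green--Ruzsa argument, not of anything the present paper does.

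That said, your outline is essentially the Green--Ruzsa strategy (Pl\"unnecke--Ruzsa, Ruzsa modelling into a finite group, Bogolyubov to find a Bohr set in $2B-2B$, then geometry of numbers to extract a coset progression). One point is slightly garbled in your ordering: you cannot ``pull the Bohr set back through the Freiman $8$-isomorphism'' directly, since Bohr sets are defined by character constraints and Freiman isomorphisms need not respect characters. The correct sequence is to stay in the model group $\Gamma$, apply the geometry-of-numbers step there to convert the Bohr set into a genuine coset progression $H+P$ inside $2B-2B$, and only then pull $H+P$ back via the Freiman isomorphism (coset progressions of bounded rank do transfer, since sums and differences of bounded length are preserved). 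Your remark about ``passing to the group generated by $A'-A'$'' to produce $H$ is also off: the subgroup $H$ comes from the structure of the Bohr set in $\Gamma$ (it is the intersection of the kernels of the characters in $\Lambda$, or equivalently the annihilator), not from anything on the $A$ side.
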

\end{sloppypar}
\begin{remark}We assume symmetry here, and throughout this section, for purely notational reasons, so as to avoid the need to distinguish between $A$ and $-A$.
\end{remark}
The following covering argument of Chang \cite{chang} shows that the approximate group $A$ is covered by a few translates of the coset progression $H+P$ given by Theorem~\ref{thm:green-ruzsa}.
\begin{prop}[Chang]\label{prop:chang}
Let $G$ be a group and suppose that $A\subset G$ is a $K$-approximate subgroup. Let $B\subset A^{M}$ be a set with $|B|\ge|A|/M'$. Then there exist sets $S_1,\ldots,S_t\subset A$ with $t\ll\log M'+M\log K$ such that $|S_i|\le2K$ and
\begin{equation}\label{eq:chang.0}
A\subset S_{t-1}^{-1}\cdots S_1^{-1}B^{-1}BS_1\cdots S_t.
\end{equation}
\end{prop}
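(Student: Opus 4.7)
The approach is an iterative doubling argument, a non-commutative adaptation of Chang's classical covering strategy. Set $A_0 := B$ and build $A_i := B S_1 \cdots S_i$ inductively, where at stage $i$ one chooses $S_i \subset A$ of cardinality at most $2K$ that is maximal subject to the constraint that the right translates $\{A_{i-1}s : s \in S_i\}$ be pairwise disjoint.

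At each stage one of two things happens. If a disjoint packing of size exactly $2K$ is available in $A$, take one; then $|A_i| = |A_{i-1}S_i| = 2K|A_{i-1}|$ and the construction continues. Otherwise $|S_i| < 2K$, and $S_i$ must then be an absolutely maximal disjoint packing inside $A$, so that for every $a \in A$ there exists $s \in S_i$ with $A_{i-1}a \cap A_{i-1}s \ne \emptyset$, i.e.\ $a \in A_{i-1}^{-1} A_{i-1} S_i$. Setting $t := i$ and expanding $A_{t-1}^{-1}A_{t-1} = S_{t-1}^{-1} \cdots S_1^{-1} B^{-1} B S_1 \cdots S_{t-1}$ gives precisely the containment (\ref{eq:chang.0}).

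It remains to bound $t$. From $A_i \subset B A^i \subset A^{M+i}$ and Lemma~\ref{lem:app.grp.fundamental} we obtain $|A_i| \le K^{M+i-1}|A|$; from the $t-1$ non-terminal doubling steps we have $|A_{t-1}| = (2K)^{t-1}|B| \ge (2K)^{t-1}|A|/M'$. Combining these estimates yields $2^{t-1} \le M' K^{M-1}$, i.e.\ $t \ll \log M' + M \log K$, as desired. The main point to watch is the alignment of the left/right conventions between the maximality-based covering step and the form of (\ref{eq:chang.0}); the symmetry of $A$ makes this routine, and there is no serious obstacle beyond spotting the right greedy selection rule.
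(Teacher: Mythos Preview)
Your proof is correct and follows essentially the same greedy-packing argument as the paper's: build $B_i = BS_1\cdots S_i$ by choosing disjoint right-translates, terminate when the maximal packing has size below $2K$, and bound $t$ by comparing the geometric growth $(2K)^{t-1}|B|$ against the upper bound $K^{M+t-2}|A|$ from Lemma~\ref{lem:app.grp.fundamental}. The only cosmetic differences are your termination threshold ($|S_t|<2K$ versus the paper's $|S_t|\le 2K$) and your closing remark about symmetry of $A$, which is not actually needed here since the expansion $A_{t-1}^{-1}A_{t-1}S_t = S_{t-1}^{-1}\cdots S_1^{-1}B^{-1}BS_1\cdots S_t$ is purely formal.
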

\begin{proof}This is essentially identical to the argument given in \cite{chang}, with the notation abstracted to allow for applications in non-abelian groups. Set $B_0:=B$. Now, given $B_i$, let $R_{i+1}$ be a maximal subset of $A$ with respect to the property that the translates $B_ix$ with $x\in R_{i+1}$ are disjoint. If $|R_{i+1}|>2K$ then let $S_{i+1}$ be an arbitrary subset of $R_{i+1}$ of cardinality $2K$ and set $B_{i+1}:=B_iS_{i+1}$. If $|R_{i+1}|\le2K$ then set $S_{i+1}=R_{i+1}$ and stop.

The fact that the translates $B_ix$ with $x\in S_{i+1}$ are disjoint implies that $|B_{i+1}|=|B_i||S_{i+1}|$, which in turn implies that that
\begin{equation}\label{eq:chang.1}
|B_i|=|B||S_1|\cdots|S_i|\ge (2K)^i|A|/M'.
\end{equation}
On the other hand, we have
\[
B_i\subset BS_1\cdots S_i\subset A^{M+i},
\]
and so
\begin{equation}\label{eq:chang.2}
|B_i|\le K^{M+i-1}|A|
\end{equation}
by Lemma \ref{lem:app.grp.fundamental}. Combining (\ref{eq:chang.1}) and (\ref{eq:chang.2}) we see that
\[
2^iK^i\le M'K^{M+i-1},
\]
which in turn implies that $i\ll\log M'+M\log K$. The algorithm therefore terminates after at most $O(\log M'+M\log K)$ steps, and so we have defined $S_1,\ldots,S_t$ with $|S_i|\le2K$ and $t\ll\log M'+M\log K$, as required. To prove (\ref{eq:chang.0}), note that $S_t\subset A$ is maximal with the property that the translates $B_{t-1}x$ with $x\in S_t$ are disjoint, where of course $B_{t-1}=BS_1\cdots S_{t-1}$. This implies in particular that for every $a\in A$ there exists $x\in S_t$ such that $B_{t-1}x\cap B_{t-1}a\ne\varnothing$, and so $A\subset B_{t-1}^{-1}B_{t-1}S_t$, which is precisely (\ref{eq:chang.0}).
\end{proof}
By viewing the sets $S_i$ given by Chang's covering argument as extra dimensions in the progression $P$ given by Theorem~\ref{thm:green-ruzsa}, we may in fact assume that $A$ is \emph{contained} in $H+P$. The details of that argument are given in \cite{green-ruzsa}. The resulting statement is as follows.
\begin{theorem}[Green--Ruzsa \cite{green-ruzsa}]\label{thm:ab.freiman}Suppose that $A$ is a symmetric subset of an abelian group with doubling constant at most $K$. Then there is a subgroup $H\subset4A$ and a progression $P=P(x_1,\ldots,x_k;L)$ of rank at most $O(K^{O(1)})$ such that
\[
x_i\in4A\qquad(i=1,\ldots,k)
\]
and
\[
A\subset H+P\subset O(K^{O(1)})A.
\]
\end{theorem}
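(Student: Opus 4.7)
The plan is to combine Theorem \ref{thm:green-ruzsa} with Chang's covering argument (Proposition \ref{prop:chang}) to upgrade the Green--Ruzsa conclusion (a large coset progression sitting inside $4A$) into the containment $A \subset H+P$ required here, at the cost of only a polynomial-in-$K$ increase in rank. The key observation is that Chang's argument produces only $O(K^{O(1)})$ small sets, each contributing at most $O(K)$ new generators.

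First I would apply Theorem \ref{thm:green-ruzsa} to obtain a coset progression $B := H + P_0 \subset 4A$, where $H \subset 4A$, $P_0 = P(x_1,\ldots,x_r;L)$ has rank $r \le O(K^{O(1)})$, and $|B| \ge \exp(-O(K^{O(1)}))|A|$. I would then apply Proposition \ref{prop:chang} to this $B$ with $M=4$ and $M' = |A|/|B| \le \exp(O(K^{O(1)}))$, giving sets $S_1,\ldots,S_t \subset A$ with $|S_i| \le 2K$ and $t \le O(K^{O(1)})$ such that $A \subset S_{t-1}^{-1}\cdots S_1^{-1}B^{-1}BS_1\cdots S_t$. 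Since the group is abelian, this rearranges to
\[
A \subset (B-B) + (S_1 - S_1) + \cdots + (S_{t-1}-S_{t-1}) + S_t.
\]

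To assemble the right-hand side into a single coset progression I would first note that $B-B = H + (P_0 - P_0)$, using that $H$ is a subgroup, and then adjoin $-x_1,\ldots,-x_r$ to the generator list so that $P_0 - P_0$ fits into a progression with the standard non-negative coefficient convention on these $2r$ generators. I would further adjoin, as new generators of side length $1$, the elements of each $S_i$ together with their negatives, absorbing all of the $S_i - S_i$ and $S_t$ terms. The resulting coset progression $H+P$ has rank at most $2r + 4Kt \le O(K^{O(1)})$; every generator lies in $B\cup A \subset 4A$; and since $B-B \subset 8A$, each $S_i - S_i \subset 2A$, and $S_t \subset A$, we have $H+P \subset (8 + 2(t-1) + 1)A \subset O(K^{O(1)})A$, as required. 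The main bookkeeping subtlety is converting $P_0 - P_0$ into a progression with non-negative coefficients, which is handled cleanly by the doubling-of-generators trick just described.
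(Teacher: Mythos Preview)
Your proposal is correct and follows exactly the route the paper sketches in the paragraph preceding the theorem: apply Theorem~\ref{thm:green-ruzsa} to get a large coset progression inside $4A$, run Chang's covering argument (Proposition~\ref{prop:chang}), and then absorb the elements of the $S_i$ as additional generators of side length $1$. The only small wrinkle is that Proposition~\ref{prop:chang} is stated for $K$-approximate groups rather than symmetric sets of doubling at most $K$, but its proof uses only the bound $|nA|\le K^{n-1}|A|$, which in the abelian symmetric setting follows from Pl\"unnecke's inequality, so the argument goes through unchanged with at worst a harmless loss in the exponent.
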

We now move on to the more general theory of approximate groups, beginning with the following straightforward result.
\begin{lemma}\label{lem:pullback}
Let $G$ be a group and let $\rho:G\to H$ be a homomorphism with finite kernel. If $A\subset H$ is a $K$-approximate group then the pullback $\rho^{-1}(A)$ is a $2K$-approximate group.
\end{lemma}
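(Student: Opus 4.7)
The plan is to write $B := \rho^{-1}(A)$ and verify the three properties of Definition~\ref{def:app.grp} in turn, the main point being to exhibit an appropriate covering set for $B^2$ of size at most $2K$.

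First I would check finiteness and the easy conditions. Since $\ker\rho$ is finite and every fibre of $\rho$ either is empty or has cardinality $|\ker\rho|$, we get $|B|\le|A|\cdot|\ker\rho|<\infty$. The identity of $G$ lies in $B$ because $\rho(e_G)=e_H\in A$, and $B$ is symmetric because $\rho(b^{-1})=\rho(b)^{-1}$ and $A^{-1}=A$.

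The main step is the covering condition. By hypothesis there is a symmetric $X\subset H$ with $|X|\le K$ and $A^2\subset XA$. The naive attempt $B^2\subset\rho^{-1}(XA)\subset\rho^{-1}(X)B$ fails because $|\rho^{-1}(X)|$ could be as large as $K|\ker\rho|$, so the key trick is to select a single preimage for each element of $X$. Concretely, pick a set $Y_0\subset G$ such that $\rho|_{Y_0}$ is a bijection onto $X$, so $|Y_0|=|X|\le K$. Given $g\in B^2$, write $\rho(g)=xa$ with $x\in X$ and $a\in A$, and let $y\in Y_0$ be the chosen preimage of $x$; then $\rho(y^{-1}g)=x^{-1}xa=a\in A$, so $y^{-1}g\in B$ and hence $g\in Y_0B$. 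Setting $Y:=Y_0\cup Y_0^{-1}$ gives a symmetric set with $|Y|\le2K$ satisfying $B^2\subset YB$, which is precisely the covering property with constant $2K$.

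There is essentially no obstacle beyond remembering to pass from the full preimage $\rho^{-1}(X)$ to a transversal $Y_0$; the factor of $2$ in $2K$ comes solely from the symmetrisation step and cannot in general be avoided if $X$ is chosen asymmetrically in $G$. (One could alternatively absorb it by choosing $Y_0$ already symmetric, provided $\rho$ admits such a transversal, but the argument above works without any such assumption.)
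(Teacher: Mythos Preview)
Your proof is correct and essentially identical to the paper's own argument: both pick a transversal $Y_0$ (the paper calls it $\{\omega(x):x\in X\}$) of the covering set $X$, symmetrise to obtain a set of size at most $2K$, and verify the covering condition $B^2\subset YB$ directly. Your closing remark about avoiding the factor of $2$ via a symmetric choice of transversal also mirrors the paper's remark following the proof.
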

\begin{proof}The set $\rho^{-1}(A)$ is certainly symmetric and contains the identity, and the fact that $\ker\rho$ is finite implies that $\rho^{-1}(A)$ is finite. It remains to check the existence of a symmetric set $\tilde{X}$ of cardinality at most $2K$ such that $\rho^{-1}(A)^2\subset\tilde{X}\rho^{-1}(A)$.

By definition there is a symmetric set $X$ of cardinality at most $K$ such that $A^2\subset XA$. For each $x\in X$ select an element $\omega(x)\in\rho^{-1}(x)$. Set $\tilde X=\{\omega(x):x\in X\}\cup\{\omega(x)^{-1}:x\in X\}$, so that $\tilde X$ is symmetric and of cardinality at most $2K$. Now given $a_1,a_2\in\rho^{-1}(A)$, note that by definition of $X$ there exist $x\in X$ and $a\in\rho^{-1}(A)$ such that $\rho(a_1)\rho(a_2)=x\rho(a)$. This implies that there exists $b\in\ker\rho$ such that $a_1a_2=\omega(x)ab$. However, $ab\in\rho^{-1}(A)$, and so $a_1a_2\in\tilde X\rho^{-1}(A)$, as desired.
\end{proof}
\begin{remark}If we were to assume additionally that $H$ had no elements of order 2 then we could insist that $\omega(x^{-1})=\omega(x)^{-1}$ so that $\{\omega(x):x\in X\}$ was symmetric, and hence conclude that $\rho^{-1}(A)$ was a $K$-approximate group.
\end{remark}
\begin{remark}\label{rem:image.app.grp}It is also trivially the case that if $\tilde A$ is a $K$-approximate subgroup of $G$ then $\rho(\tilde A)$ is a $K$-approximate subgroup of $H$.
\end{remark}
\begin{lemma}[{\cite[\S3]{tao.product.set}}]\label{lem:sm.trip.to.approx.grp}
Let $A$ be a symmetric subset of a group, and suppose that $A$ contains the identity and satisfies $|A^3|\le K|A|$. Then $A^3$ is an $O(K^{O(1)})$-approximate group.
\end{lemma}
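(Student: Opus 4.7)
The plan is to first use the non-commutative Plünnecke–Ruzsa machinery to control $|A^n|$ for every fixed $n$ by a polynomial in $K$, and then use a Ruzsa-style covering argument to exhibit the required small symmetric cover of $(A^3)^2$ by $A^3$.

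\textbf{Step 1: bounding iterated products.} The starting point is the non-commutative Plünnecke--Ruzsa inequality from \cite{tao.product.set}. Given that $A$ is symmetric, contains the identity, and satisfies $|A^3|\le K|A|$, one iterates Ruzsa-triangle-type inequalities to deduce $|A^n|\le K^{O(n)}|A|$ for every fixed positive integer $n$. In particular we record the special case $|A^7|\le K^{O(1)}|A|$.

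\textbf{Step 2: Ruzsa covering.} We now cover $A^6=(A^3)^2$ by a few translates of $A^3$. Let $X$ be a maximal subset of $A^6$ with the property that the left-translates $xA$, $x\in X$, are pairwise disjoint. On the one hand the disjoint union $\bigsqcup_{x\in X}xA$ is contained in $A^6\cdot A=A^7$, so
\[
|X|\,|A|\le|A^7|\le K^{O(1)}|A|,
\]
which gives $|X|\le K^{O(1)}$. On the other hand, by maximality, for every $a\in A^6$ there exists $x\in X$ with $xA\cap aA\ne\varnothing$; since $A=A^{-1}$ this gives $a\in xAA^{-1}=xA^2\subset xA^3$, and hence $A^6\subset XA^3$.

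\textbf{Step 3: symmetry.} The set $A^3$ is symmetric and contains the identity because $A$ is. To meet Definition~\ref{def:app.grp} we need a symmetric covering set, so we replace $X$ by $\tilde X:=X\cup X^{-1}$, which still has cardinality at most $2K^{O(1)}=K^{O(1)}$ and still satisfies $A^6\subset\tilde XA^3$. Thus $A^3$ is an $O(K^{O(1)})$-approximate group, as required.

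\textbf{Main obstacle.} The substantive content lies in Step~1, namely the non-commutative Plünnecke--Ruzsa inequality relating $|A^3|/|A|$ to $|A^n|/|A|$. Steps~2 and~3 are then a straightforward covering-plus-symmetrisation argument; crucially, the same argument will be reused later in the paper whenever we wish to pass from a small-tripling hypothesis to a genuine approximate group.
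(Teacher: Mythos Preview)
Your proof is correct and follows the standard argument from \cite[\S3]{tao.product.set}, which is precisely what the paper cites (without giving its own proof). The paper simply states the lemma with the citation, so there is nothing further to compare.
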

\begin{lemma}\label{lem:approx.grp.in.[g,g]}
Let $G$ be a group with subgroup $H$ and suppose that $A\subset G$ is a $K$-approximate subgroup. Then for each $m\ge2$
\begin{enumerate}
\renewcommand{\labelenumi}{(\roman{enumi})}
\item $A^m\cap H$ is a $2K^{2m-1}$-approximate subgroup; and
\item $A^m\cap H$ can be covered by at most $K^{m-1}$ left-translates of $A^2\cap H$.
\end{enumerate}
\end{lemma}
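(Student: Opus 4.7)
My plan is to deduce both parts from a single covering observation. Iterating the defining inclusion $A^2\subset XA$ with $|X|\le K$ shows that $A^n\subset X^{n-1}A$, so $A^n$ is covered by at most $K^{n-1}$ left-translates of $A$. For such a translate $xA$, if it meets $H$ I fix some $h\in xA\cap H$; for any other $h'\in xA\cap H$, the element $h^{-1}h'$ lies in $A^{-1}A\cap H=A^2\cap H$ (using symmetry of $A$), and hence $xA\cap H\subset h(A^2\cap H)$ with $h\in H$.

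Part (ii) is immediate: covering $A^m$ by $K^{m-1}$ left-translates of $A$ and intersecting with $H$ yields, via the previous paragraph, a cover of $A^m\cap H$ by at most $K^{m-1}$ left-translates of $A^2\cap H$. For part (i) I would first observe that $(A^m\cap H)^2\subset A^{2m}\cap H$, and then apply (ii) with $2m$ in place of $m$ to cover $A^{2m}\cap H$ by at most $K^{2m-1}$ left-translates of $A^2\cap H$, which is contained in $A^m\cap H$ since $m\ge2$. This produces a (possibly asymmetric) covering set $Y'\subset H$ of size at most $K^{2m-1}$ with $(A^m\cap H)^2\subset Y'(A^m\cap H)$; setting $Y:=Y'\cup(Y')^{-1}$ gives the required symmetric cover of size at most $2K^{2m-1}$, exhibiting $A^m\cap H$ as a $2K^{2m-1}$-approximate subgroup.

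I do not expect any real obstacle. The only bookkeeping point is that each covering element must be chosen to lie in $H$ (so that the translates genuinely sit inside $H$ and $A^m\cap H$ qualifies as an approximate subgroup), which is automatic from the choice $h\in xA\cap H$. The factor of $2$ in part (i) is simply the cost of the final symmetrisation step; there is no need for a more delicate cover since both bounds are stated with room to spare.
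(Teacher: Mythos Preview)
Your proof is correct and follows essentially the same approach as the paper: cover $A^m$ by at most $K^{m-1}$ left-translates of $A$, observe that each translate $xA$ meeting $H$ satisfies $xA\cap H\subset h(A^2\cap H)$ for any $h\in xA\cap H$, and then deduce (i) from (ii) by applying (ii) with $2m$ in place of $m$ and symmetrising the covering set. The paper's write-up differs only cosmetically, introducing a function $\nu(y)$ for the chosen element of $yA\cap H$.
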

\begin{proof}This is essentially found in \cite[Lemma 10.3]{arb.group}. Since $A^2\cap H\subset A^r\cap H$ and $(A^r\cap H)^2\subset A^{2r}\cap H$, applying (ii) with $m=2r$ would imply that $(A^r\cap H)^2$ could be covered by $K^{2r-1}$ left-translates of $A^r\cap H$. More explicitly, there would be a set $Z$ of cardinality at most $K^{2r-1}$ such that $(A^r\cap H)^2\subset Z(A^r\cap H)$. In particular, $X:=Z\cup Z^{-1}$ would be a symmetric set of cardinality at most $2K^{2r-1}$ such that $(A^r\cap H)^2\subset X(A^r\cap H)$. This would be sufficient to satisfy condition (i) with $m=r$. It therefore suffices to prove (ii) for arbitrary $m$.

The fact that $A$ is a $K$-approximate group implies that there is a subset $Y$ of cardinality at most $K^{m-1}$ such that $A^m\subset YA$, and in particular that
\begin{equation}\label{eq:slicing.1}
A^m\cap H\subset YA.
\end{equation}
We may assume that $Y$ is minimal such that (\ref{eq:slicing.1}) holds, and hence that for each element $y\in Y$ there exists an element
\begin{equation}\label{eq:slicing.2}
\nu(y)\in H\cap yA.
\end{equation}
Fix an arbitrary $y\in Y$ and note that (\ref{eq:slicing.2}) implies that there exist $a\in A$ such that
\begin{equation}\label{eq:slicing.3}
\nu(y)=ya.
\end{equation}
Now note that for arbitrary $a'\in A$ we have $ya'=\nu(y)a^{-1}a'$ by (\ref{eq:slicing.3}), and so
\begin{equation}\label{eq:slicing.4}
yA\subset\nu(y)A^2.
\end{equation}
Moreover, (\ref{eq:slicing.2}) implies that $\nu(y)\in H$, which certainly implies that $H\subset\nu(y)H$. Combined with (\ref{eq:slicing.4}) this gives
\begin{equation}\label{eq:slicing.5}
yA\cap H\subset\nu(y)(A^2\cap H).
\end{equation}
Since $y$ was arbitrary, combining (\ref{eq:slicing.1}) and (\ref{eq:slicing.5}) yields,
\[
A^m\cap H\subset\bigcup_{y\in Y}\nu(y)(A^2\cap H),
\]
from which the result is immediate.
\end{proof}
The rough strategy of the present work is to induct on the step $s$ of $G$ by splitting $G$ into $[G,G]$ and $G/[G,G]$, each of which has step less than $s$. The behaviour of $A$ under this splitting is described by Lemma \ref{lem:splitting} below. Before we state the lemma, let us note a standard definition.
\begin{definition}[Right inverse]
Suppose $\pi:X\to Y$ is some function. Then a function $\phi:\pi(X)\to X$ is called a \emph{right inverse} to $\pi$ if $\pi\circ\phi$ is equal to the identity on $\pi(X)$.
\end{definition}
\begin{lemma}[Tao's splitting lemma]\label{lem:splitting}Let $G$ be a group and let $H$ be a normal subgroup. Write $\pi:G\to G/H$ for the canonical homomorphism. Suppose that $A\subset G$ is a finite symmetric set. Let $m\in\N$. Then there exists a right inverse $\phi:G/H\to G$ to $\pi$ satisfying the following conditions.
\begin{enumerate}
\renewcommand{\labelenumi}{(\roman{enumi})}
\item For every $r\in\N$ we have $\phi(\pi(A^r))\subset A^r$.
\item We have $\phi(1)=1$.
\item For every $x_1,\ldots,x_n\in\pi(A^r)$ with $x_1\cdots x_n=1$, and every $\varepsilon_1,\ldots,\varepsilon_n=\pm1$, we have $\phi(x_1^{\varepsilon_1})^{\varepsilon_1}\cdots\phi(x_n^{\varepsilon_n})^{\varepsilon_n}\in A^{rn}\cap H$ for every $r\in\N$.
\item We have the containment $A\subset\phi(\pi(A))(A^2\cap H)$.
\end{enumerate}
\end{lemma}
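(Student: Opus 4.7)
The plan is to build $\phi$ greedily with respect to word-length in $A$. For each $g \in G/H$ lying in $\pi(A^r)$ for some $r \geq 1$, let $r(g)$ denote the smallest such $r$, and choose $\phi(g)$ to be any element of $\pi^{-1}(g) \cap A^{r(g)}$, which is nonempty by the definition of $r(g)$. Take $\phi(1) = 1$ (consistent with (i) in the setting where $1 \in A$), and for any remaining $g \in G/H$ pick an arbitrary preimage under $\pi$; the conclusions place no constraint on those values. By construction $\pi \circ \phi = \id$ on $G/H$, so $\phi$ is a right inverse and (ii) holds by fiat.

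Condition (i) is then immediate: if $g \in \pi(A^r)$ then $r(g) \le r$, so $\phi(g) \in A^{r(g)} \subseteq A^r$. The greedy choice is essential here: drawing $\phi(g)$ from the \emph{smallest} $A^{r(g)}$ whose image contains $g$ simultaneously witnesses membership in $A^r$ for every larger $r$, yielding a single global function that handles all word-lengths at once.

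Conditions (iii) and (iv) then follow formally from (i), the right-inverse property, and the symmetry of $A$. For (iii), symmetry of $A^r$ implies that $\pi(A^r)$ is closed under inversion, so each $x_i^{\varepsilon_i}$ lies in $\pi(A^r)$; thus $\phi(x_i^{\varepsilon_i}) \in A^r$ by (i), and then $\phi(x_i^{\varepsilon_i})^{\varepsilon_i} \in A^r$ by symmetry again. The product of $n$ such elements therefore lies in $A^{rn}$, and applying $\pi$ and using $\pi \circ \phi = \id$ shows it maps to $x_1 \cdots x_n = 1$, hence lies in $H$. For (iv), given $a \in A$ we have $\pi(a) \in \pi(A)$, so $\phi(\pi(a)) \in A$ by (i); then $h := \phi(\pi(a))^{-1}a$ satisfies $\pi(h) = 1$ and $h \in A^{-1} A = A^2$ by symmetry, so $h \in A^2 \cap H$ and $a = \phi(\pi(a)) h \in \phi(\pi(A))(A^2 \cap H)$.

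The main, and really the only, conceptual step is producing a single globally defined $\phi$ that satisfies (i) for every $r$ at once; the greedy construction handles this at a stroke, and the remaining conditions are bookkeeping. I do not anticipate any genuine obstacle; the only subtlety is resisting the temptation to construct $\phi$ separately for each $r$, which would fail to give a single function.
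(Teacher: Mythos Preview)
Your proposal is correct and matches the paper's proof essentially line for line: the paper declares the existence of a $\phi$ satisfying (i) and (ii) to be ``trivial'' (your greedy minimal-$r$ construction is exactly the intended unpacking of that word), and then derives (iii) and (iv) from (i), the right-inverse property, and the symmetry of $A$ by the same computations you give. Your aside about needing $1\in A$ for (ii) to be compatible with (i) at $r=1$ is a fair observation; in the paper's applications $A$ is always an approximate group and so contains the identity by definition.
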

\begin{remark}\label{rem:after.splitting}In light of Remark \ref{rem:image.app.grp} and Lemma \ref{lem:approx.grp.in.[g,g]}, in the case that $A$ is a $K$-approximate group the splitting lemma gives a $K$-approximate group $C=\pi(A)\subset G/H$ and a $2K^3$-approximate group $B_1=A^2\cap H\subset H$ such that $A\subset\phi(C)B_1$.
\end{remark}
\begin{proof}[Proof of Lemma \ref{lem:splitting}]
The proof is all essentially contained in \cite[Lemma 7.7]{tao.product.set}. It is trivial that there exists a right inverse $\phi$ to $\pi$ satisfying conditions (i) and (ii); we claim that $\phi$ must additionally satisfy conditions (iii) and (iv).

To see that $\phi$ satisfies condition (iii), note that if $x_1\cdots x_n=1$ in $G/H$ then this implies by definition that $\phi(x_1^{\varepsilon_1})^{\varepsilon_1}\cdots\phi(x_n^{\varepsilon_n})^{\varepsilon_n}$ lies in $H$. The fact that it also lies in $A^{rn}$ is immediate from the symmetry of $A$ and condition (i).

To verify condition (iv), note that for every $a\in A$ it follows from condition (i) and the symmetry of $A$ that $\phi(\pi(a))^{-1}a\in A^2$, and hence that $a\in \phi(\pi(a))A^2$. Furthermore, the fact that $a$ and $\phi(\pi(a))$ lie in the same fibre of $\pi$ implies that $a\in\phi(\pi(a))H$. Thus $A\subset\phi(\pi(A))(A^2\cap H)$, as required.
\end{proof}
\begin{remark}If $G$ has no 2-torsion then we may insist, as was done in \cite{tao.product.set}, that $\phi(x^{-1})=\phi(x)^{-1}$ for every $x$, in which case the exponents $\varepsilon_i$ featuring in condition (iii) will be superfluous.
\end{remark}
When applying the splitting lemma we shall repeatedly use the following observation; it is essentially trivial, but we record it as a lemma for ease of later reference.
\begin{lemma}\label{lem:size.phi(C)B}
Let $G$ be a group with a normal subgroup $H$. Write $\pi:G\to G/H$ for the canonical projection and suppose that $\phi:G/H\to G$ is a right inverse to $\pi$. Let $C\subset G/H$ and let $B\subset H$. Then $|\phi(C)B|=|C||B|$.
\end{lemma}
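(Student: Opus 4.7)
The plan is to exhibit an explicit bijection between $C \times B$ and $\phi(C)B$, which immediately yields the claimed equality $|\phi(C)B| = |C| |B|$. Since $\phi(C)B$ is by definition $\{\phi(c)b : c \in C, b \in B\}$, there is a natural surjection
\[
\Psi : C \times B \to \phi(C)B, \qquad (c,b) \mapsto \phi(c)b,
\]
and it suffices to show $\Psi$ is injective.

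So suppose $\phi(c_1)b_1 = \phi(c_2)b_2$ for some $c_1,c_2 \in C$ and $b_1,b_2 \in B \subset H$. Applying $\pi$ and using that $\pi$ is a homomorphism with $B \subset H = \ker\pi$, we get $\pi(\phi(c_1)) = \pi(\phi(c_2))$. Since $\phi$ is a right inverse to $\pi$, the left-hand side equals $c_1$ and the right-hand side equals $c_2$, so $c_1 = c_2$. Cancelling $\phi(c_1) = \phi(c_2)$ then gives $b_1 = b_2$, establishing injectivity.

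The underlying principle is just that distinct elements of $\phi(C)$ lie in distinct cosets of $H$ (a direct consequence of the right inverse property), so that the factorisation of any element of $\phi(C)B$ as a product of an element of $\phi(C)$ and an element of $B \subset H$ is unique. There is no real obstacle here: the statement is essentially bookkeeping about right inverses, and the one-line verification above should suffice.
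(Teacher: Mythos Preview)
Your proof is correct and is essentially the same as the paper's: the paper simply observes that the cosets $\phi(c)H$ for $c\in C$ are disjoint (which is exactly your observation that $\pi(\phi(c))=c$ recovers $c$), so the sets $\phi(c)B$ are disjoint, giving $|\phi(C)B|=|C||B|$. Your version just spells out the bijection explicitly.
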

\begin{proof}The cosets $\phi(c)H$ for $c\in C$ are disjoint, and so in particular the sets $\phi(c)B$ are disjoint.
\end{proof}
\begin{corollary}\label{cor:size.of.B}Let $G$ be a group with a normal subgroup $H$ and write $\pi:G\to G/H$ for the canonical homomorphism. Suppose that $A\subset G$ is a finite symmetric set. Then $|\pi(A)||A^2\cap H|\ge|A|$.
\end{corollary}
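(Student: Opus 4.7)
The plan is to combine Tao's splitting lemma (Lemma~\ref{lem:splitting}) with the product-size identity in Lemma~\ref{lem:size.phi(C)B}, both of which have already been established. Concretely, I would apply Lemma~\ref{lem:splitting} to the symmetric set $A$ and the normal subgroup $H$ to obtain a right inverse $\phi:G/H\to G$ to $\pi$ satisfying conditions (i)--(iv). The key output I need is condition (iv), which gives the containment
\[
A\subset\phi(\pi(A))(A^2\cap H).
\]

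Next I would take cardinalities. Setting $C:=\pi(A)\subset G/H$ and $B:=A^2\cap H\subset H$, Lemma~\ref{lem:size.phi(C)B} applies verbatim and yields $|\phi(C)B|=|C||B|=|\pi(A)|\,|A^2\cap H|$. Combining this with the containment above gives
\[
|A|\le|\phi(\pi(A))(A^2\cap H)|=|\pi(A)|\,|A^2\cap H|,
\]
which is exactly the claimed inequality.

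There is essentially no obstacle here: the content of the corollary is already packaged into the two preceding results, and I just need to invoke them in the right order. The only mild subtlety is that Lemma~\ref{lem:splitting} is stated for a finite symmetric $A$ (without requiring it to be an approximate group or to contain the identity), which matches the hypothesis of the corollary, so no additional assumptions on $A$ need to be introduced.
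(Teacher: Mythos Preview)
Your proposal is correct and matches the paper's approach exactly: the paper's proof simply states that the result is immediate from Lemmas~\ref{lem:splitting} and~\ref{lem:size.phi(C)B}, which is precisely the combination you spell out.
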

\begin{proof}
This is immediate from Lemmas~\ref{lem:splitting} and \ref{lem:size.phi(C)B}.
\end{proof}
%
%
%\begin{lemma}\label{lem:approx.grp.in.[g,g]}
%Let $G$ be a discrete group with normal subgroup $H$ and suppose that $A\subset G$ is a $K$-approximate subgroup. Let $m\ge2$ and $n\ge1$. Then $|(A^m\cap H)^n|\le K^{mn}|A^m\cap H|$. Furthermore, $|A^m\cap H|\le K^m|A^2\cap H|$.
%\end{lemma}
%\begin{proof}
%This is essentially found in \cite[Lemma 7.7]{tao.product.set}. Let $\pi$ and $\phi$ be as in Lemma~\ref{lem:splitting} and let $m'\ge2$. The fact that $\phi(\pi(A))(A^m\cap H)^n\subset A^{mn+1}$ implies, by Lemma~\ref{lem:app.grp.fundamental}, that
%\begin{equation}\label{eq:approx.grp.in.[g,g].1}
%|\phi(\pi(A))(A^m\cap H)^n|\le K^{mn}|A|,
%\end{equation}
%whilst condition (iv) of Lemma~\ref{lem:splitting} implies that
%\begin{equation}\label{eq:approx.grp.in.[g,g].2}
%|\phi(\pi(A))(A^{m'}\cap H)|\ge|A|
%\end{equation}
%Combining (\ref{eq:approx.grp.in.[g,g].1}) and (\ref{eq:approx.grp.in.[g,g].2}) with Lemma~\ref{lem:size.phi(C)B} implies that
%\[
%|\pi(A)||(A^m\cap H)^n|\le K^{mn}|\pi(A)||(A^{m'}\cap H)|,
%\]
%and then dividing through by $|\pi(A)|$ gives the first desired conclusion if $m=m'$, and the second desired conclusion if $n=1$ and $m'=2$.
%\end{proof}
We close this section with the following converse to the splitting lemma.
\begin{lemma}[Converse to the splitting lemma]\label{lem:splitting.converse}
Let $G$ be a group with normal subgroup $H$, and write $\pi:G\to G/H$ for the canonical projection. Suppose that $C\subset G/H$ and $B_1\subset B_2\subset B_3\subset B_4\subset H$ are symmetric sets containing 1 such that
\begin{equation}\label{eq:splitting.converse.1}
|C^3|\le K_1|C|;
\end{equation}
\begin{equation}\label{eq:splitting.converse.2}
|B_4|\le K_2|B_1|;
\end{equation}
\begin{equation}\label{eq:splitting.converse.3}
|B_4^4|\le K_3|B_4|.
\end{equation}
Suppose further that $\phi:C^3\to G$ is a right inverse to $\pi$ such that
\begin{equation}\label{eq:splitting.converse.4}
\phi(x)B_i\phi(x)^{-1}\subset B_{i+1};\quad\phi(x)^{-1}B_i\phi(x)\subset B_{i+1}\quad\text{for all $x\in C$}
\end{equation}
and such that whenever $\varepsilon_i=\pm1$ we have
\begin{equation}\label{eq:splitting.converse.5}
\phi(x_1^{\varepsilon_1})^{\varepsilon_1}\phi(x_2^{\varepsilon_2})^{\varepsilon_2}\phi(x_3^{\varepsilon_3})^{\varepsilon_3}\phi(x_4^{\varepsilon_4})^{\varepsilon_4}\in B_4\qquad\text{for all $x_i\in C^3$ with $x_1x_2x_3x_4=1$.}
\end{equation}
Then we have
\[
|(\phi(C)B_1\cup B_1\phi(C)^{-1})^3|\le K_1K_2K_3|(\phi(C)B_1\cup B_1\phi(C)^{-1})|.
\]
In particular, by Lemma~\ref{lem:sm.trip.to.approx.grp}, $(\phi(C)B_1\cup B_1\phi(C)^{-1})^3$ is an $O((K_1K_2K_3)^{O(1)})$-approximate group.
\end{lemma}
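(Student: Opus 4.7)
The plan is to exploit the conjugation hypothesis (\ref{eq:splitting.converse.4}) to write every element of $S := \phi(C)B_1 \cup B_1\phi(C)^{-1}$ in the uniform shape $\phi(c)^{\varepsilon}\beta$ with $c \in C$, $\varepsilon \in \{\pm 1\}$ and $\beta \in B_2$. The element $\phi(c)b$ already has this shape with $\varepsilon = +1$ and $\beta = b \in B_1 \subset B_2$, while $b\phi(c)^{-1} = \phi(c)^{-1}\bigl(\phi(c)b\phi(c)^{-1}\bigr)$ has it with $\varepsilon = -1$ and $\beta \in B_2$ thanks to (\ref{eq:splitting.converse.4}). This reduces bounding $|S^3|$ to controlling products of three such normal-form elements.

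Next I would take a generic product $s_1 s_2 s_3 \in S^3$ with $s_i = \phi(c_i)^{\varepsilon_i}\beta_i$ and sweep the three $\beta$-factors rightwards past the $\phi$-factors, using (\ref{eq:splitting.converse.4}) at each conjugation to keep track of how far up the chain $B_2 \subset B_3 \subset B_4$ they have been pushed. Moving $\beta_1$ past $\phi(c_2)^{\varepsilon_2}$ promotes it to $B_3$; then moving the resulting element and $\beta_2$ past $\phi(c_3)^{\varepsilon_3}$ promotes them into $B_4$ and $B_3$ respectively. The outcome is
\[
s_1 s_2 s_3 = \phi(c_1)^{\varepsilon_1}\phi(c_2)^{\varepsilon_2}\phi(c_3)^{\varepsilon_3}\cdot\beta\qquad\text{with }\beta \in B_4^3.
\]
Setting $y := c_1^{\varepsilon_1}c_2^{\varepsilon_2}c_3^{\varepsilon_3} \in C^3$ I would then rewrite this further as $\phi(y)\cdot h\cdot\beta$ where $h := \phi(y)^{-1}\phi(c_1)^{\varepsilon_1}\phi(c_2)^{\varepsilon_2}\phi(c_3)^{\varepsilon_3}$ lies in $H$ (it projects to $1$ under $\pi$). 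The crucial input is then hypothesis (\ref{eq:splitting.converse.5}) applied to the tuple $(y^{-1}, c_1^{\varepsilon_1}, c_2^{\varepsilon_2}, c_3^{\varepsilon_3})$ in $C^3$ (whose product telescopes to $1$), which places $h$ in $B_4$ and hence shows $s_1 s_2 s_3 \in \phi(y) B_4^4$.

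To finish, I would use that the cosets $\phi(y)H$ are pairwise disjoint for distinct $y \in C^3$ to deduce
\[
|S^3| \le |C^3|\,|B_4^4| \le K_1 K_3\,|C|\,|B_4| \le K_1 K_2 K_3\,|C|\,|B_1|,
\]
while Lemma \ref{lem:size.phi(C)B} gives $|S| \ge |\phi(C)B_1| = |C|\,|B_1|$, yielding $|S^3| \le K_1 K_2 K_3 |S|$; the "in particular" statement is then immediate from Lemma~\ref{lem:sm.trip.to.approx.grp}. The main subtlety will be matching the mildly awkward form $\phi(x_i^{\varepsilon_i})^{\varepsilon_i}$ of (\ref{eq:splitting.converse.5}) to the product one actually needs to estimate, together with the bookkeeping required to verify that the promoted $\beta$-factors never escape $B_4$; apart from that the calculation is essentially formal.
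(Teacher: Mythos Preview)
Your proposal is correct and follows essentially the same approach as the paper: sweep the $B$-factors to the right using (\ref{eq:splitting.converse.4}), collapse the resulting word of $\phi$-factors into a single $\phi(y)$ times an element of $B_4$ via (\ref{eq:splitting.converse.5}), and then count using Lemma~\ref{lem:size.phi(C)B}. The only difference is organizational---you normalise each factor into the shape $\phi(c)^{\varepsilon}\beta$ up front, whereas the paper leaves the two cases $\phi(x_i)B_1$ and $B_1\phi(x_i^{-1})^{-1}$ implicit and does the same sweep in one line; the bookkeeping and the final bound $S^3\subset\phi(C^3)B_4^4$ are identical.
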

\begin{remark}
Let us sketch why this can be thought of as a converse to Lemma \ref{lem:splitting}, at least in the case in which the set $A$ appearing in Lemma \ref{lem:splitting} is an approximate group. Set $C=\pi(A)$ and $B_1=A^2\cap H$ as in Remark \ref{rem:after.splitting}, and set $B_2=A^4\cap H$; $B_3=A^6\cap H$; and $B_4=A^{12}\cap H$. Let $\phi$ be the function given by Lemma \ref{lem:splitting}.

Lemma \ref{lem:splitting.converse} says that if conditions (\ref{eq:splitting.converse.1}), (\ref{eq:splitting.converse.2}), (\ref{eq:splitting.converse.3}), (\ref{eq:splitting.converse.4}) and (\ref{eq:splitting.converse.5}) hold then $\phi(C)(B_1)$ is close to being an approximate group. On the other hand, if $A$ is an approximate group, which by condition (iv) of Lemma \ref{lem:splitting} roughly means that $\phi(C)(B_1)$ is close to being an approximate group, then we leave it to the reader as a simple exercise to verify that conditions (\ref{eq:splitting.converse.1}), (\ref{eq:splitting.converse.2}), (\ref{eq:splitting.converse.3}), (\ref{eq:splitting.converse.4}) and (\ref{eq:splitting.converse.5}) hold.
\end{remark}
\begin{proof}[Proof of Lemma \ref{lem:splitting.converse}]
This, again, is essentially found in \cite[Lemma 7.7]{tao.product.set}. Let $y_1,y_2,y_3\in(\phi(C)B_1\cup B_1\phi(C)^{-1})$ and write $x_i:=\pi(y_i)$; then either $y_i\in\phi(x_i)B_1$ or $y_i\in B_1\phi(x_i^{-1})^{-1}$. In each case, repeated application of (\ref{eq:splitting.converse.4}) implies that there exist $\varepsilon_i=\pm1$ such that
\[
y_1y_2y_3\in\phi(x_1^{\varepsilon_1})^{\varepsilon_1}\phi(x_2^{\varepsilon_2})^{\varepsilon_2}\phi(x_3^{\varepsilon_3})^{\varepsilon_3}B_4^3.
\]
An application of (\ref{eq:splitting.converse.5}) then implies that
\begin{equation}\label{eq:splitting.converse.6}
y_1y_2y_3\in\phi(x_1x_2x_3)B_4^4\subset\phi(C^3)B_4^4.
\end{equation}
Lemma~\ref{lem:size.phi(C)B} combines with (\ref{eq:splitting.converse.1}), (\ref{eq:splitting.converse.2}) and (\ref{eq:splitting.converse.3}) to imply that $|\phi(C^3)B_4^4|\le K_1K_2K_3|\phi(C)B_1|$, and in conjunction with (\ref{eq:splitting.converse.6}) this gives the result immediately.
\end{proof}
\section{Commutators}\label{sec:comm.defs}
In this section we introduce \emph{commutators}, which are central to the theory of nilpotent groups. Following a set up in \cite[\S1]{tor.free.nilp}, which was in turn based on \cite[\S11.1]{hall}, we introduce the following definition.
\begin{definition}[Commutators and weights]
We define \emph{(formal) commutators} in the letters $x_1,\ldots,x_r$ recursively by defining each $x_i$ to be a formal commutator and, given two formal commutators $\alpha,\alpha'$ in the $x_j$, defining $[\alpha,\alpha']$ also to be a formal commutator.

To each commutator $\alpha$ we assign a \emph{weight vector} $\chi(\alpha)\in\N_0^r$, defined recursively by setting $\chi(x_i):=e_i$ and, given two formal commutators $\alpha,\alpha'$ in the $x_j$, defining $\chi([\alpha,\alpha'])=\chi(\alpha)+\chi(\alpha')$. We define the \emph{total weight} $|\chi(\alpha)|$ of a commutator $\alpha$ to be $\|\chi(\alpha)\|_1$. Given a weight vector $\chi\in\N_0^r$ and a vector $L=(L_1,\ldots,L_r)$ of positive integers we write $L^\chi$ to denote the quantity $L_1^{\chi_1}\cdots L_r^{\chi_r}$.

Noting that this results in at most finitely many commutators of any given weight vector, we assign a fixed total ordering $\prec$ to the commutators, chosen arbitrarily subject to the conditions that $x_1\prec\ldots\prec x_r$, that commutators of the same weight vector are consecutive, and that commutators of lower total weight come before commutators of higher total weight.

Finally, for each commutator $\alpha$ we define the \emph{(formal) inverse commutator} $\alpha^{-1}$. 
We extend $\prec$ to a partial ordering of commutators and their formal inverses, defining $\alpha^{\pm1}\prec\beta^{\pm1}$ when $\alpha\prec\beta$.
\end{definition}
Of course, for elements $u,v$ in a group the commutator $[u,v]$ is interpreted as the group element $u^{-1}v^{-1}uv$, and the formal inverse of a commutator $\alpha$ is interpreted as the inverse of the interpretation of $\alpha$. Thus if $x_1,\ldots,x_r$ are group elements then the formal commutators in the $x_i$, and their inverses, have interpretations as group elements.\footnote{Of course, when a commutator $[\alpha,\beta]$ is interpreted as a group element then the inverse of that group element is equal to the interpretation of $[\beta,\alpha]$. As formal commutators, however, $[\beta,\alpha]$ and $[\alpha,\beta]^{-1}$ are considered distinct.} In the case where the $x_i$ are elements of a \emph{nilpotent} group there are only finitely many commutators with non-trivial interpretations as group elements.

It will be useful to distinguish between, for example, the commutator $[x_1,x_2]$ and the function $(x_1,x_2)\mapsto[x_1,x_2]$, which we shall call a \emph{commutator form}. We therefore also introduce the following definition.
\begin{definition}[Commutator form]\label{def:com.form}
A \emph{commutator form} of weight $n$ on the letters $x_1,\ldots,x_r$ will be a function from $\{x_1,\ldots,x_r\}^n$ to the set of formal commutators in the $x_i$, defined recursively as follows. The identity function $\iota$ given by $\iota(x_i):=x_i$ is the unique commutator form of weight 1. The commutator forms of weight $n$ will consist of all functions $\gamma$ for which there exist commutator forms $\gamma_1,\gamma_2$ with respective weights $m_1,m_2$ summing to $n$, and a permutation $\sigma\in S_n$, such that
\[
\gamma(x_{i_1},\ldots,x_{i_n})=[\gamma_1(x_{i_{\sigma(1)}},\ldots,x_{i_{\sigma(m_1)}}),\gamma_2(x_{i_{\sigma(m_1+1)}},\ldots,x_{i_{\sigma(n)}})]
\]
for all $i_j\in[1,r]$. We will denote by $|\chi(\gamma)|$ the weight of a commutator form $\gamma$.
\end{definition}
\begin{remark}
The permutation $\sigma$ is necessary in the definition of a commutator form only to ensure, for example, that the function $\gamma:(x_1,x_2,x_3)\mapsto[[x_1,x_3],x_2]$ is not excluded for the rather artificial reason that the elements $x_1,x_2,x_3$ appear out of order in the definition of the function.
\end{remark}
\begin{definition}[Simple commutators]\label{def:simp.coms}
Given elements $x_1,x_2,\ldots$ of a group $G$ we define the \emph{simple commutators} $[x_1,\ldots,x_k]$ inductively by
\[
[x_1,x_2]:=x_1^{-1}x_2^{-2}x_1x_2;\qquad[x_1,\ldots,x_k]:=[[x_1,\ldots,x_{k-1}],x_k]\quad(k\ge2).
\]
If $X_1,\ldots,X_s$ are subsets of $G$ then we set
\[
[X_1,\ldots,X_s]:=\{[x_1,\ldots,x_s]:x_i\in X_i\}.
\]
If the total weight of a simple commutator is not clear from context then we indicate it by a subscript to the commutator bracket; thus, for example, if $X$ is a subset of $G$ then we write
\[
[X,\ldots,X]_s:=\{[x_1,\ldots,x_s]:x_i\in X\}.
\]
\end{definition}
\begin{remark}
The notation $[X_1,\ldots,X_s]$ defined in Definition \ref{def:simp.coms} is unfortunately inconsistent with the convention often used in the literature whereby the commutator $[H_1,\ldots,H_s]$ of \emph{subgroups} $H_1,\ldots,H_s$ of $G$ is defined to be the subgroup \emph{generated by} $\{[h_1,\ldots,h_s]:h_i\in H_i\}$. We trust that this will not be too confusing for the reader, and note in particular that whenever we use this notation the sets $X_i$ will not, in general, be subgroups, and so the aforementioned convention would in any case not apply.
\end{remark}
\begin{definition}[Lower central series]
Given a group $G$ we define the \emph{lower central series} $\Gamma_1(G),\Gamma_2(G),\ldots$ of $G$ by
\[
\Gamma_1(G):=G;\qquad\Gamma_{i+1}(G):=\langle[a,b]\text{ }|\text{ }a\in\Gamma_i(G),b\in G\rangle.
\]
\end{definition}
A group $G$ is of course nilpotent of step at most $s$ if and only if $\Gamma_{s+1}(G)=\{1\}$, and if $\alpha$ is a commutator of total weight $n$ in the elements of $G$ then $\alpha\in\Gamma_n(G)$.
%We record the following observation as a lemma for ease of later reference.
%\begin{lemma}\label{lem:g.in.Zn}
%Let $G$ be a group with generating set $A$, and let $g\in G$. Then $g\in Z_n(G)$ if and only if $[g,a]\in Z_{n-1}(G)$ for every $a\in A$, and if and only if $[g,a_1,\ldots,a_n]=1$ for every $a_1,\ldots,a_n\in A$.
%\end{lemma}
We note the following characterisation of subsets of $G$ that generate subgroups of step $\tilde s$, which follows, for example, from \cite[Theorem 10.2.3]{hall}.
\begin{lemma}\label{lem:nilp.gens}
Let $G$ be a nilpotent group, and suppose that $X$ is a finite subset of $G$. Then $X$ generates a subgroup of step at most $\tilde s$ if and only if $[X,\ldots,X]_{\tilde s+1}=\{1\}$.
\end{lemma}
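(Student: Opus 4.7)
My plan is to prove the two directions separately, with the forward direction being essentially immediate from the definitions and the reverse direction requiring the standard commutator-collecting machinery of Hall.

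For the forward direction, assume $\langle X\rangle$ is nilpotent of step at most $\tilde s$, so that $\Gamma_{\tilde s+1}(\langle X\rangle)=\{1\}$. A simple commutator $[x_1,\ldots,x_{\tilde s+1}]$ with each $x_i\in X$ has total weight $\tilde s+1$ in elements of $\langle X\rangle$, so it lies in $\Gamma_{\tilde s+1}(\langle X\rangle)=\{1\}$ by the observation recorded just before the lemma statement.

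For the reverse direction, set $H:=\langle X\rangle$ and assume $[X,\ldots,X]_{\tilde s+1}=\{1\}$. Since $H$ is a subgroup of the nilpotent group $G$, it is itself nilpotent, so $\Gamma_k(H)=\{1\}$ for some sufficiently large $k$. The key input (this is exactly what Hall \cite[Theorem~10.2.3]{hall} or the basic commutator collection process provides) is that for any group generated by a set $X$, the abelian quotient $\Gamma_k(H)/\Gamma_{k+1}(H)$ is generated by the images of simple (indeed, basic) commutators of weight exactly $k$ in elements of $X$. Two points need checking here: first, that a general weight-$k$ commutator in elements of $X$ reduces modulo $\Gamma_{k+1}(H)$ to a product of simple ones, which follows from iterating the identities $[ab,c]\equiv[a,c][b,c]$ and $[a,bc]\equiv[a,b][a,c]$ modulo $\Gamma_{k+1}(H)$; second, that conjugates by elements of $H$ can be ignored, which holds because $\Gamma_k(H)/\Gamma_{k+1}(H)$ is central in $H/\Gamma_{k+1}(H)$.

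Applying this with $k=\tilde s+1$, the hypothesis $[X,\ldots,X]_{\tilde s+1}=\{1\}$ gives $\Gamma_{\tilde s+1}(H)=\Gamma_{\tilde s+2}(H)$. Iterating, $\Gamma_{\tilde s+1}(H)=\Gamma_{\tilde s+j}(H)$ for all $j\ge 1$, and since $H$ is nilpotent the right-hand side is trivial for $j$ large, forcing $\Gamma_{\tilde s+1}(H)=\{1\}$. The main (and only) obstacle is the invocation of the generation statement for $\Gamma_{\tilde s+1}(H)/\Gamma_{\tilde s+2}(H)$; since this is a classical result cited in the statement via Hall, I would quote it rather than reprove the collection process.
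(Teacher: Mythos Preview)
Your proof is correct and matches the paper's approach: the paper does not give a proof either but simply notes that the lemma follows from \cite[Theorem~10.2.3]{hall}, exactly the result you invoke for the substantive direction. One small remark on your sketch of that result: the bilinearity identities $[ab,c]\equiv[a,c][b,c]$ and $[a,bc]\equiv[a,b][a,c]$ alone do not reduce a non-simple weight-$k$ commutator such as $[[x_1,x_2],[x_3,x_4]]$ to a product of simple ones modulo $\Gamma_{k+1}$ --- that step requires a Jacobi/Hall--Witt identity or, equivalently, the inductive argument via $\Gamma_k=[\Gamma_{k-1},H]$ --- but since you ultimately quote Hall's theorem rather than reproving it, this does not affect the validity of your argument.
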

\section{The torsion-free case}\label{sec:ind.tor.free}
Our aim is to use the splitting lemma (Lemma \ref{lem:splitting}) to express $A$ in terms of approximate groups of lower nilpotency class. To motivate matters, in this section we consider the torsion-free case, where the details are cleanest. The key result is the following.
\begin{prop}\label{prop:tor.free.inside.prod.of.lower.step}
Let $G$ be a torsion-free $s$-step nilpotent group with $s\ge2$ and suppose that $A\subset G$ is a $K$-approximate subgroup. Then there exist an integer $r\le K^{O(1)}$ and $K^{O(1)}$-approximate groups $A_1,\ldots,A_r\subset A^{O(1)}$, each of which generates a subgroup of step less than $s$, such that
\[
A\subset A_1\cdots A_r.
\]
\end{prop}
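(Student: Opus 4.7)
The strategy has three main ingredients: Tao's splitting lemma (Lemma~\ref{lem:splitting}), the Green--Ruzsa theorem (Theorem~\ref{thm:ab.freiman}) applied in the abelianisation of $G$, and Chang's covering argument (Proposition~\ref{prop:chang}).

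First I apply Lemma~\ref{lem:splitting} with the normal subgroup $H := [G,G]$. By Remark~\ref{rem:after.splitting} this yields a right inverse $\phi$ to the projection $\pi : G \to G/H$ and the containment $A \subset \phi(C)B_1$, where $C := \pi(A)$ is a $K$-approximate group in the torsion-free abelian quotient $G/H$ and $B_1 := A^2 \cap H$ is a $2K^3$-approximate group in $H$. The standard commutator inequality $\Gamma_i([G,G]) \subset \Gamma_{i+1}(G)$ gives that $[G,G]$ has step at most $s-1 < s$, so $B_1$ generates a subgroup of step strictly less than $s$ and may already serve as one of the factors $A_i$.

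Next I apply Theorem~\ref{thm:ab.freiman} to $C$; since $G/H$ is torsion-free, the ``coset'' part of the Green--Ruzsa conclusion is trivial, and we obtain a genuine progression $P = P(y_1, \ldots, y_k; L)$ with rank $k \le K^{O(1)}$ and each $y_i \in \pi(A^4)$, satisfying $C \subset P \subset K^{O(1)} C$. Lifting each generator by $\tilde y_i := \phi(y_i) \in A^4$ via property~(i) of the splitting lemma, each cyclic subgroup $\langle \tilde y_i \rangle$ is abelian and hence of step $1 < s$; cyclic pieces of the form $\{\tilde y_i^j : |j| \le M\}$ are thus the natural candidates for the remaining factors. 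For any $c = \sum l_i y_i \in C$, property~(iii) of the splitting lemma applied to the balanced identity $y_1^{l_1}\cdots y_k^{l_k} \cdot c^{-1} = 1$ in $G/H$ expresses $\phi(c)$ as $\tilde y_1^{l_1} \cdots \tilde y_k^{l_k} h$ for some $h$ in an explicit approximate subgroup of $H$.

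The main obstacle is that the exponents $l_i$ can be as large as $L_i$, so the direct choice $A_i := \{\tilde y_i^j : |j| \le L_i\}$ fails the condition $A_i \subset A^{O(1)}$. To circumvent this, I apply Chang's argument (Proposition~\ref{prop:chang}) to $A$ with a seed built from the decomposition $\phi(C)B_1$; the necessary approximate-group property of an enlargement of this seed is verified via the converse splitting lemma (Lemma~\ref{lem:splitting.converse}), whose hypotheses \eqref{eq:splitting.converse.1}--\eqref{eq:splitting.converse.5} follow from $C$ being a $K$-approximate group, from $B_1$ and its dilates being controlled by Lemma~\ref{lem:approx.grp.in.[g,g]}, and from the cocycle-type conditions (iii) on $\phi$. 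Chang then produces $A \subset S_{t-1}^{-1} \cdots S_1^{-1} B^{-1}B \, S_1 \cdots S_t$ with $t \le K^{O(1)}$ and each $S_j \subset A$ of size at most $2K$. Each $S_j$ is finally reorganised, via property~(iii) of $\phi$, into a bounded product of cyclic pieces in the $\tilde y_i$ (each generating an abelian subgroup, so of step $1$) together with a factor inside $B_1$; after collecting, this yields a product of $K^{O(1)}$ approximate groups of step less than $s$, each contained in $A^{O(1)}$, as required.
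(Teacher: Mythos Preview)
Your proposal assembles the right ingredients (splitting lemma, Green--Ruzsa, converse splitting lemma, Chang) but the way you wire them together has two genuine gaps.

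\textbf{The seed for Chang.} You say you apply Proposition~\ref{prop:chang} ``with a seed built from the decomposition $\phi(C)B_1$''. But $\phi(C)B_1$ already \emph{contains} $A$ by condition~(iv) of Lemma~\ref{lem:splitting}, so Chang is vacuous there. What Chang needs is a set $B\subset A^M$ that is \emph{dense} in $A$ (so $|B|\ge|A|/M'$) and that is \emph{already} a product of step-$<s$ approximate groups lying in $A^{O(1)}$. The paper obtains this by applying the \emph{dense} version of Green--Ruzsa (Theorem~\ref{thm:green-ruzsa}) rather than the \emph{containing} version (Theorem~\ref{thm:ab.freiman}), so that each one-dimensional piece $P_i=\{lx_i:|l|\le L_i\}$ lies in $\pi(A^4)$ and hence $\phi(P_i)\subset A^4$. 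The converse splitting lemma is then applied \emph{separately to each} $\phi(P_i)$ (with $B_1=A^{20}\cap\Gamma_2(G)$, etc.) to produce approximate groups $X_i\subset A^{O(1)}$; the point is that $X_i\subset\langle\phi(x_i)\rangle\Gamma_2(G)$, which is generated by a single element modulo $\Gamma_2$ and hence has step $\le s-1$. The seed for Chang is $X_1\cdots X_r$, which is dense but does \emph{not} contain $A$. Your version applies the converse splitting lemma to all of $C$ at once, and the resulting approximate group has no reason to have step $<s$.

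\textbf{The reorganisation of $S_j$.} Your final step, writing $\phi(c)=\tilde y_1^{l_1}\cdots\tilde y_k^{l_k}h$ via property~(iii), does not give $h\in A^{O(1)}\cap H$: property~(iii) applied to the length-$(\sum|l_i|+1)$ relation $y_1^{l_1}\cdots y_k^{l_k}c^{-1}=1$ only places $h$ in $A^{4(\sum|l_i|+1)}\cap H$, which is unbounded --- exactly the obstacle you correctly identified two sentences earlier. The paper's treatment of the $S_j$ is in fact much simpler and does not use $\phi$ at all: enumerate the elements of $\bigcup_jS_j$ as $s_1,\ldots,s_m$ (with $m\le K^{O(1)}$), and observe that each $P(s_i;1)=\{s_i^{-1},1,s_i\}\subset A$ is a $2$-approximate \emph{abelian} group, so automatically of step $1<s$.
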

We deduce Proposition~\ref{prop:tor.free.inside.prod.of.lower.step} from the following result.
\begin{prop}\label{prop:prod.of.approx.grps.tor.free}
Let $G$ be a torsion-free $s$-step nilpotent group with $s\ge2$ and suppose that $A\subset G$ is a $K$-approximate subgroup. Then there exist an integer $r\le K^{O(1)}$ and $K^{O(1)}$-approximate groups $X_0,\ldots,X_r\subset A^{O(1)}$ of step less than $s$, such that
\[
|X_0\cdots X_r|\ge\exp(-K^{O(1)})|A|.
\]
\end{prop}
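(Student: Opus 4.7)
The plan is to induct on the nilpotency step $s$, with $s=2$ as the base case; the inductive step uses Tao's splitting lemma to reduce to a structural statement about the quotient $G/\Gamma_s(G)$, invokes Proposition \ref{prop:tor.free.inside.prod.of.lower.step} at step $s-1$ (which the paper separately deduces from Proposition \ref{prop:prod.of.approx.grps.tor.free} via Chang's argument), and lifts back using the converse splitting lemma.

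First I apply Lemma \ref{lem:splitting} with $H := \Gamma_s(G)$. Since $G$ is torsion-free and $s$-step, $H$ is central (hence abelian) and $G/H$ is $(s-1)$-step. Writing $\phi$ for the right inverse produced, $C := \pi(A)$ is a $K$-approximate group in $G/H$, $B_1 := A^2 \cap H$ is an abelian $2K^3$-approximate group, and condition (iv) together with Lemma \ref{lem:size.phi(C)B} gives $A \subset \phi(C)B_1$ with $|\phi(C)B_1| = |C||B_1| \ge |A|$. Inductively applying Proposition \ref{prop:tor.free.inside.prod.of.lower.step} to $C \subset G/H$ then furnishes $r' \le K^{O(1)}$ and $K^{O(1)}$-approximate groups $C_1, \ldots, C_{r'} \subset C^{O(1)}$, each generating a subgroup of step less than $s-1$ in $G/H$, with $C \subset C_1 \cdots C_{r'}$.

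The crucial algebraic point is that, because $H$ is central in $G$, any $Y \subset G$ whose image $\pi(Y)$ generates a step-$\le(s-2)$ subgroup of $G/H$ automatically generates a step-$\le(s-1)$ subgroup of $G$ (quotienting by a central subgroup drops the step by at most one). So, choosing a nested family $B_1 \subset B_2 \subset B_3 \subset B_4$ of abelian approximate subgroups of $H$ of the form $A^{m_i} \cap H$ for a suitable increasing sequence $m_i$ (their approximate-group structure and the required size bounds follow from Lemma \ref{lem:approx.grp.in.[g,g]}), observing that conjugation by $\phi(x)$ acts trivially on the central $H$ so that (\ref{eq:splitting.converse.4}) is automatic, and verifying (\ref{eq:splitting.converse.1})--(\ref{eq:splitting.converse.3}) and (\ref{eq:splitting.converse.5}) via property (iii) of the splitting lemma and routine bookkeeping, Lemma \ref{lem:splitting.converse} yields $K^{O(1)}$-approximate groups $X_j := (\phi(C_j)B_1 \cup B_1\phi(C_j)^{-1})^3 \subset A^{O(1)}$, each of step less than $s$ in $G$.

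To assemble $A$, I use property (iii) of the splitting lemma with $n = r'+1$ applied to the trivial relation $(c_1 \cdots c_{r'})^{-1} \cdot c_1 \cdots c_{r'} = 1$ in $G/H$; this shows that $\phi(c_1) \cdots \phi(c_{r'}) \in \phi(c_1 \cdots c_{r'}) \cdot (A^{O(r')} \cap H)$, so $\phi(C) \subset \phi(C_1) \cdots \phi(C_{r'}) \cdot B'$ with $B' := A^{O(r')} \cap H$ (abelian and approximate by Lemma \ref{lem:approx.grp.in.[g,g]}). Since $\phi(C_j) \subset X_j$ and $H$ is central, setting $X_0 := B'B_1$ gives $A \subset X_0 X_1 \cdots X_{r'}$ and hence the required size bound. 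The main obstacle is the base case $s = 2$, where ``step less than $s-1$'' is vacuous and the induction is unavailable; here I would apply Theorem \ref{thm:ab.freiman} to the abelian $C$ directly and construct the abelian $X_i$'s in $A^{O(1)}$ from cyclic lifts of the generators of the resulting progression (adjusted by central elements to remain in $A^{O(1)}$) together with $B_1$ and a lift of the subgroup part. A secondary subtlety is ensuring that the error sets arising from the non-multiplicativity of $\phi$ can be absorbed into a single abelian $X_0$ without inflating the rank or cardinality bounds beyond $K^{O(1)}$ and $\exp(K^{O(1)})|A|$.
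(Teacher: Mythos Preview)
Your approach differs substantially from the paper's and contains a genuine gap. The paper does \emph{not} induct on $s$ here; it splits at $\Gamma_2(G)$ (not $\Gamma_s(G)$), applies Green--Ruzsa directly to the abelian quotient $\pi(A)$ to obtain a coset progression $\tilde H+P$, lifts each one-dimensional factor $P_i$ of $P$ to an approximate group $X_i$ via the converse splitting lemma, and then---this is the key idea you are missing---shows that the lift $X_0$ of the finite subgroup $\tilde H$ also has step at most $s-1$. That last step uses Lemma~\ref{lem:comms.are.homs}: the weight-$s$ commutator form is a homomorphism in each variable, so $[X_0,\ldots,X_0]_s$ equals $\psi(\tilde H,\ldots,\tilde H)$ for a multihomomorphism $\psi$, which is therefore a union of finite subgroups of $G$; torsion-freeness forces these to be trivial.

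Your inductive scheme breaks because $G/\Gamma_s(G)$ need \emph{not} be torsion-free even when $G$ is. For instance, let $G$ be the subgroup of the rational Heisenberg group generated by the standard generators $x,y$ together with a central square root $w$ of $[x,y]$; then $G$ is torsion-free and $2$-step, yet $\Gamma_2(G)=\langle w^2\rangle$, so the image of $w$ in $G/\Gamma_2(G)$ has order $2$. Hence Proposition~\ref{prop:tor.free.inside.prod.of.lower.step} cannot be invoked on $C$. (Replacing $\Gamma_s(G)$ by $Z(G)$ would repair this particular point, since $G/Z(G)$ \emph{is} torsion-free for torsion-free nilpotent $G$.) There is a second issue: your error set $B'=A^{O(r')}\cap H$ has $r'\le K^{O(1)}$, so $X_0\subset A^{K^{O(1)}}$ rather than the required $A^{O(1)}$. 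Finally, your base case $s=2$ still has to explain why the lift of the subgroup part of the coset progression is abelian---and that is exactly the content of the paper's argument via Lemma~\ref{lem:comms.are.homs}, which your proposal never invokes.
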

A significant portion of the proof of Proposition \ref{prop:prod.of.approx.grps.tor.free} is in fact valid without the assumption that $G$ is torsion free. We isolate this portion as follows.
\begin{prop}\label{prop:syros}
Let $G$ be an $s$-step nilpotent group with $s\ge2$ and write $\pi:G\to G/\Gamma_2(G)$ for the canonical projection. Suppose that $A\subset G$ is a $K$-approximate subgroup. Then there exist an integer $r\le K^{O(1)}$ and $K^{O(1)}$-approximate groups $X_1,\ldots,X_r\subset A^{O(1)}$ of step less than $s$, and a $K^{O(1)}$-approximate group $X_0\subset A^{O(1)}$ with the property that $\pi(X_0)$ is a finite subgroup $\tilde H$ inside $\pi(A^4)$, such that
\[
|X_0\cdots X_r|\ge\exp(-K^{O(1)})|A|.
\]
\end{prop}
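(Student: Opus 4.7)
The plan is to apply Tao's splitting lemma (Lemma \ref{lem:splitting}) to the normal subgroup $\Gamma_2(G)$, invoke the Green--Ruzsa theorem on the abelian quotient $C := \pi(A)$, and then lift the resulting coset progression back to $G$ using the converse splitting lemma (Lemma \ref{lem:splitting.converse}).

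First, writing $B_1 := A^2 \cap \Gamma_2(G)$ and letting $\phi$ be the right inverse of $\pi$ supplied by Lemma \ref{lem:splitting}, Corollary \ref{cor:size.of.B} gives $|C| \cdot |B_1| \ge |A|$. Applying Theorem \ref{thm:green-ruzsa} to the abelian $K$-approximate group $C$ (which has doubling at most $K$) yields a coset progression $\tilde H + P \subset 4C = \pi(A^4)$, with $\tilde H$ a finite subgroup of $G/\Gamma_2(G)$, $P = P(\bar a_1, \ldots, \bar a_k; L)$ a progression of rank $k \le K^{O(1)}$, and $|\tilde H + P| \ge \exp(-K^{O(1)}) |C|$. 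The crucial observation is that both $\tilde H$ and each one-dimensional sub-progression $Q_i := \{l \bar a_i : |l| \le L_i\}$ of $P$ lie inside $\pi(A^4)$, so condition (i) of Lemma \ref{lem:splitting} gives $\phi(\tilde H) \cup \phi(Q_i) \subset A^4$ \emph{regardless} of the side lengths $L_i$ (which may be enormous). Similarly $\phi(Q_i^3) \subset A^{12}$ since $Q_i^3 \subset 12C$.

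Next, for each $i \ge 1$ I apply Lemma \ref{lem:splitting.converse} with the ``$C$'' there taken to be $Q_i$, together with a chain $B'_1 \subset B'_2 \subset B'_3 \subset B'_4$ of sets of the form $A^m \cap \Gamma_2(G)$ with $m$ ranging over universal constants (large enough both to absorb conjugation by $\phi(Q_i) \subset A^4$ and to contain the four-term products from Lemma \ref{lem:splitting}(iii), which lie in $A^{48} \cap \Gamma_2(G)$). Lemma \ref{lem:approx.grp.in.[g,g]} then bounds $|B'_4|/|B'_1|$ and $|(B'_4)^4|/|B'_4|$ by $K^{O(1)}$, while $|Q_i^3|/|Q_i| \le 3$. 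The conclusion is that $X_i := (\phi(Q_i) B'_1 \cup B'_1 \phi(Q_i)^{-1})^3$ is a $K^{O(1)}$-approximate group sitting inside $A^{O(1)}$. To see that $X_i$ has step less than $s$, note that its elements all lie in $\langle a_i, \Gamma_2(G)\rangle$ (with $a_i \in A^4$ any lift of $\bar a_i$), a subgroup whose quotient modulo $\Gamma_2(G)$ is cyclic and therefore abelian; its derived subgroup is thus contained in $\Gamma_3(G)$, so it is nilpotent of step at most $s-1$. An identical construction with $\tilde H$ in place of $Q_i$ (using $\tilde H^3 = \tilde H$ to take $K_1 = 1$ in Lemma \ref{lem:splitting.converse}) yields the distinguished approximate group $X_0$ with $\pi(X_0) = \tilde H$, and we set $X_{k+1} := B_1$, which is already a $K^{O(1)}$-approximate group of step less than $s$ by Lemma \ref{lem:approx.grp.in.[g,g]}.

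For the size bound, observe that $X_0 X_1 \cdots X_{k+1}$ contains $\phi(\tilde H) \phi(Q_1) \cdots \phi(Q_k) \cdot B_1$, since each $X_i$ contains the corresponding $\phi$-image (as $B'_1$ contains the identity). The $\pi$-image of the latter set is $\tilde H + Q_1 + \cdots + Q_k \supset \tilde H + P$, and each fibre over this image contains a translate of $B_1$, so the same bookkeeping as in Lemma \ref{lem:size.phi(C)B} gives
\[
|X_0 \cdots X_{k+1}| \ge |\tilde H + P| \cdot |B_1| \ge \exp(-K^{O(1)}) |C| \cdot |B_1| \ge \exp(-K^{O(1)}) |A|,
\]
and setting $r := k+1 \le K^{O(1)}$ completes the argument. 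The main technical hurdle is the careful bookkeeping of word-lengths needed to verify the hypotheses of Lemma \ref{lem:splitting.converse} with \emph{universal} exponents, which is what keeps each $X_i$ inside $A^{O(1)}$ rather than merely $A^{K^{O(1)}}$.
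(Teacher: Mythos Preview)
Your proof is correct and follows essentially the same approach as the paper: apply Green--Ruzsa to $\pi(A)$, then lift the subgroup $\tilde H$ and each one-dimensional piece $Q_i$ of the progression separately via the converse splitting lemma, observing that $\langle a_i,\Gamma_2(G)\rangle$ has step at most $s-1$. The only cosmetic differences are your extra factor $X_{k+1}=B_1$ (harmless but unnecessary, since $B'_1$ is already built into $X_k$) and your fibre-counting size bound in place of the paper's explicit containment $\phi(\tilde H+P)Y_{20}\subset X_0\cdots X_r$.
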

\begin{proof}
The set $\pi(A)$ is trivially a $K$-approximate group, and so by Theorem~\ref{thm:green-ruzsa} and the symmetry of $A$ there is an abelian progression $P=P(x_1,\ldots,x_r;L)$ of dimension
\begin{equation}\label{eq:key.1}
r\le K^{O(1)}
\end{equation}
and a subgroup $\tilde H$ of $G/\Gamma_2(G)$ such that
\begin{equation}\label{eq:key.1.5}
\tilde H+P\subset\pi(A^4)
\end{equation}
and
\begin{equation}\label{eq:key.2}
|\tilde H+P|\ge\exp(-K^{O(1)})|\pi(A)|.
\end{equation}
Set
\[
Y_n:=A^n\cap\Gamma_2(G),
\]
and for each $i=1,\ldots,r$ write
\[
P_i:=\{lx_i:|l|\le L_i\}
\]
so that $P=P_1+\ldots+P_r$. Let $\phi$ be a right inverse $\phi:G/\Gamma_2(G)\to G$ to $\pi$ given by applying Lemma~\ref{lem:splitting}. By conditions (i) and (iii) of that lemma and (\ref{eq:key.1.5}), for any $l_1,\ldots,l_r$ satisfying $|l_i|\le L_i$ and $h\in\tilde H$ we have
%testhere
%\begin{align}\label{eq:key.2.2}
%\phi(l_1x_1+\ldots+l_rx_r) &\in\phi(l_1x_1+\ldots+l_{r-1}x_{r-1})\phi(l_rx_r)X_{12}\\
%                           &\subset\phi(l_1x_1+\ldots+l_{r-2}x_{r-2})X_{12}\phi(l_{r-1}x_{r-1})X_{12}\phi(l_rx_r)X_{12}\\
%                           &\qquad\vdots\\
%                           &\subset\phi(l_1x_1)\phi(l_2x_2)X_{12}\cdots\phi(l_{r-1}x_{r-1})X_{12}\phi(l_rx_r)X_{12}.
%\end{align}
\begin{align*}
\phi(h+l_1x_1+\ldots+l_rx_r) &\in\phi(h+l_1x_1+\ldots+l_{r-1}x_{r-1})\phi(l_rx_r)Y_{12}\\
                             &\subset\phi(h+l_1x_1+\ldots+l_{r-2}x_{r-2})Y_{12}\phi(l_{r-1}x_{r-1})Y_{12}\phi(l_rx_r)Y_{12}\\
                             &\qquad\vdots\\
                             &\subset\phi(h)\phi(l_1x_1)Y_{12}\cdots\phi(l_{r-1}x_{r-1})Y_{12}\phi(l_rx_r)Y_{12},
\end{align*}
and hence
\begin{equation}\label{eq:key.2.2}
\phi(\tilde H+P)\subset\phi(\tilde H)\phi(P_1)Y_{12}\cdots\phi(P_{r-1})Y_{12}\phi(P_r)Y_{12}
\end{equation}
Now (\ref{eq:key.1.5}), condition (i) of Lemma~\ref{lem:splitting} and the symmetry of $A$ imply that
\begin{equation}\label{eq:key.2.3}
\phi(y)^{-1}Y_n\phi(y)\subset Y_{n+8};\quad\phi(y)Y_n\phi(y)^{-1}\subset Y_{n+8}\quad\text{for all $y\in P$},
\end{equation}
and so from (\ref{eq:key.2.2}) we can conclude that
\[
\phi(\tilde H+P)\subset\phi(\tilde H)Y_{20}\phi(P_1)Y_{20}\cdots\phi(P_{r-1})Y_{20}\phi(P_r),
\]
and hence that
\begin{equation}\label{eq:key.3}
\phi(\tilde H+P)Y_{20}\subset\phi(\tilde H)Y_{20}\phi(P_1)Y_{20}\cdots\phi(P_r)Y_{20}.
\end{equation}
Now Lemma~\ref{lem:approx.grp.in.[g,g]} implies that $|Y_n^4|\le K^{O(n)}|Y_n|$ and that $|Y_n|\le K^n|Y_{n'}|$ for every $n'\ge2$, and of course we have $|3P_i|\le3|P_i|$ and $|3\tilde H|=|\tilde H|$. Combined with (\ref{eq:key.2.3}) and condition (iii) of Lemma~\ref{lem:splitting}, this means that we may apply Lemma~\ref{lem:splitting.converse} with $C=\tilde H$ or $P_i$, $B_1=Y_{20}$, $B_2=Y_{28}$, $B_3=Y_{36}$ and $B_4=Y_{48}$ to conclude that
\[
X_0:=(\phi(\tilde H)Y_{20}\cup Y_{20}\phi(\tilde H)^{-1})^3
\]
and
\[
X_i:=(\phi(P_i)Y_{20}\cup Y_{20}\phi(P_i)^{-1})^3
\]
are $K^{O(1)}$-approximate groups.

By the definitions of $\phi$ and $Y_n$ we have $X_i\subset A^{O(1)}$, as desired. To see that $|X_0\cdots X_r|\ge\exp(-K^{O(1)})|A|$, note first that by (\ref{eq:key.3}) we have $|X_0\cdots X_r|\ge|\phi(\tilde H+P)Y_{20}|$, which is equal to $|\tilde H+P||Y_{20}|$ by Lemma~\ref{lem:size.phi(C)B}. This in turn is at least $\exp(-K^{O(1)})|\pi(A)||Y_{20}|$ by (\ref{eq:key.2}), which is, as desired, at least $\exp(-K^{O(1)})|A|$ by Corollary~\ref{cor:size.of.B}.

Since $\phi$ is a right inverse to $\pi$, the image $\pi(X_0)$ is equal to the subgroup $\tilde H$ of $G/\Gamma_2(G)$, and by (\ref{eq:key.1.5}) it is finite and contained in $\pi(A^4)$, as desired. The fact that $\phi$ is a right inverse to $\pi$ also implies that for $|l|\le L_i$ we have $\phi(lx_i)\in\phi(x_i)^l\Gamma_2(G)$. In particular, $\phi(P_i)Y_{20}$, and hence $X_i$, is a subset of the group $\langle\phi(x_i)\rangle\Gamma_2(G)$, which is of step at most $s-1$. This completes the proof.
\end{proof}
Given Proposition \ref{prop:syros}, in order to prove Proposition \ref{prop:prod.of.approx.grps.tor.free} it will suffice to show that if $X_0$ is a subset of a torsion-free $s$-step nilpotent group, and if the image $\pi(X_0)$ of $X_0$ is a finite subgroup of $G/\Gamma_2(G)$, then $X_0$ generates a subgroup of step at most $s-1$. Equivalently, thanks to Lemma \ref{lem:nilp.gens}, it will suffice to show that the set $[X_0,\ldots,X_0]_s$ contains only the identity.

The key to showing this turns out to be the following statement, which is an immediate corollary of Proposition \ref{prop:com.components}. 
\begin{lemma}\label{lem:comms.are.homs}
If $G$ is an $s$-step nilpotent group then the map $\alpha:G^s\to G$ defined by $\alpha(x_1,\ldots,x_s)=[x_1,\ldots,x_s]$ is a homomorphism in each variable. Furthermore, $[x_1,\ldots,x_s]$ is trivial whenever there is some $i$ with $x_i\in\Gamma_2(G)$.
\end{lemma}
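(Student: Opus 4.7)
The plan is to exploit the fact that $\Gamma_s(G)$ is central in $G$. Indeed, since $G$ is $s$-step nilpotent, $\Gamma_{s+1}(G)=[\Gamma_s(G),G]=\{1\}$, so every element of $\Gamma_s(G)$ commutes with every element of $G$. The simple commutator $[x_1,\ldots,x_s]$ has total weight $s$ and so lies in $\Gamma_s(G)$, meaning $\alpha$ takes values in a central, hence abelian, subgroup. In particular, showing that $\alpha$ is a homomorphism in each variable is equivalent to showing that it is additive in each slot, so the order of the factors in any identity we produce will not matter.

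For the second claim (triviality when some $x_i\in\Gamma_2(G)$), I would do a weight count. The partial commutator $[x_1,\ldots,x_{i-1}]$ lies in $\Gamma_{i-1}(G)$; commutating once with $x_i\in\Gamma_2(G)$ places the result in $[\Gamma_{i-1}(G),\Gamma_2(G)]\subset\Gamma_{i+1}(G)$ by the standard inclusion $[\Gamma_a,\Gamma_b]\subset\Gamma_{a+b}$; and each of the remaining $s-i$ outer brackets with $x_{i+1},\ldots,x_s$ adds at least one to the weight, giving $[x_1,\ldots,x_s]\in\Gamma_{s+1}(G)=\{1\}$. (The edge case $i=1$ works similarly: $x_1\in\Gamma_2(G)$ and $s-1$ further commutators land us in $\Gamma_{s+1}(G)$.)

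For the multilinearity claim, I would induct on $s$ using the standard identities
\[
[uv,w]=[u,w]^v[v,w],\qquad [u,vw]=[u,w][u,v]^w,
\]
together with $a^b=a\cdot[a,b]$. To split in the final slot, set $c=[x_1,\ldots,x_{s-1}]\in\Gamma_{s-1}(G)$ and compute $[c,yz]=[c,z]\cdot[c,y]\cdot[[c,y],z]$; the last factor lies in $[\Gamma_s(G),G]=\{1\}$, and the remaining two lie in the central subgroup $\Gamma_s(G)$ and so commute, yielding the desired additivity. To split in an intermediate slot $1\le i<s$, apply the same identity at depth $i$ inside the nested bracket, producing a main term and conjugation corrections; then apply the outer $s-i$ brackets. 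At each outer level the correction term either (i) has accumulated enough weight to be forced into $\Gamma_{s+1}(G)=\{1\}$, or (ii) is a commutator one of whose entries already lies in $\Gamma_{\ge 2}(G)$ and so vanishes by the triviality statement just established.

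The main obstacle is the bookkeeping for these intermediate slots: one has to track how weights propagate through each of the nested brackets, checking that every conjugation correction is absorbed either into $\Gamma_{s+1}(G)$ or into a simple commutator containing a $\Gamma_2$-element. Once this routine propagation is in place, the two claims fit together cleanly, and the lemma follows.
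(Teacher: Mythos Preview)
Your approach is correct and is the standard direct argument: use $[\Gamma_a(G),\Gamma_b(G)]\subset\Gamma_{a+b}(G)$ for the triviality claim, and for multilinearity track error terms modulo successive $\Gamma_{j+1}(G)$ as the outer brackets are applied. One small point: your ``case~(ii)'' is not really a separate mechanism---the correction terms always die simply because they carry excess weight (your case~(i)), so the argument is cleaner if you phrase it as an induction showing
\[
[x_1,\ldots,x_ix_i',\ldots,x_j]\equiv[x_1,\ldots,x_i,\ldots,x_j]\,[x_1,\ldots,x_i',\ldots,x_j]\pmod{\Gamma_{j+1}(G)}
\]
for $j=i,i+1,\ldots,s$, rather than invoking the triviality statement mid-stream.

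The paper, however, does not argue this way at all. It simply records the lemma as an immediate corollary of Proposition~\ref{prop:com.components}, a general decomposition result stating that for any commutator form $\alpha$ of weight $r$, the element $\alpha\bigl(\prod_i x_i^{(1)},\ldots,\prod_i x_i^{(r)}\bigr)$ is a product of distinct formal commutators $\eta_l$, each containing at least one argument from every slot, and each either of the shape $\alpha(x_{i_1}^{(1)},\ldots,x_{i_r}^{(r)})$ or of weight strictly greater than $r$. Specialising to $r=s$ in an $s$-step group kills all the higher-weight $\eta_l$, giving multilinearity at once; and writing an element of $\Gamma_2(G)$ as a product of commutators, the same proposition forces every surviving $\eta_l$ to have weight $>s$, giving the triviality claim.

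Your route is shorter and more elementary for this particular lemma. The paper's route costs more up front (Proposition~\ref{prop:com.components} is proved by a somewhat intricate double induction in Appendix~\ref{sec:commutators.2}) but yields a tool that is reused several times elsewhere---notably in Proposition~\ref{prop:grp.in.normal'} and in Proposition~\ref{prop:com.powers}---so the investment is amortised.
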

The final conclusion of Lemma \ref{lem:comms.are.homs} is particularly useful in conjunction with the following result, the proof of which is left as a simple exercise.
\begin{lemma}\label{lem:induced.multi.hom}
Let $k>0$ be an integer and let $\Gamma,\Gamma_1,\ldots,\Gamma_k$ be groups. For each $i=1,\ldots,k$ let $\Gamma_i'$ be a normal subgroup of $\Gamma_i$, and write $\rho_i:\Gamma_i\to\Gamma_i/\Gamma'_i$ for the canonical homomorphism. Suppose that $\varphi:\Gamma_1\times\cdots\times\Gamma_k\to\Gamma$ is a map that is a homomorphism in each variable, and suppose that $\varphi$ is trivial whenever there is an $i$ such that the $i$th argument of $\varphi$ belongs to $\Gamma_i'$. Then the function
\[
\begin{array}{ccccl}
\psi&:&\Gamma_1/\Gamma_1'\times\cdots\times\Gamma_k/\Gamma_k' &\to     &\Gamma\vspace{5pt}\\
    & &(\rho_1(x_1),\ldots,\rho_k(x_k))                       &\mapsto &\varphi(x_1,\ldots,x_k)
\end{array}
\]
is well defined and a homomorphism in each variable.

Moreover, if for each $i$ there is a subgroup $H_i$ of $\Gamma_i/\Gamma_i'$ and a set $V_i\subset\Gamma_i$ satisfying $\rho_i(V_i)=H_i$ then we have
\[
\varphi(V_1,\ldots,V_k)=\psi(H_1,\ldots,H_k).
\]
\end{lemma}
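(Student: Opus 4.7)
The plan is to verify in turn that $\psi$ is well defined, that it is a homomorphism in each variable, and that its image on $(H_1, \dots, H_k)$ coincides with $\varphi(V_1, \dots, V_k)$. All three follow in a routine way from the two hypotheses on $\varphi$, namely separate multiplicativity in each coordinate and vanishing as soon as one coordinate lies in the corresponding $\Gamma_i'$.

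For well-definedness, I would argue one coordinate at a time. Fix an index $i$ and elements $x_1, \dots, x_k$, and suppose $g \in \Gamma_i'$. Using that $\varphi$ is a homomorphism in the $i$th slot, I can write
\[
\varphi(x_1, \dots, x_i g, \dots, x_k) = \varphi(x_1, \dots, x_i, \dots, x_k)\,\varphi(x_1, \dots, g, \dots, x_k),
\]
and the second factor is trivial by the assumption that $\varphi$ is killed whenever one of its arguments lies in $\Gamma_i'$. Applying this successively in each coordinate shows that $\varphi$ depends only on the cosets $\rho_1(x_1), \dots, \rho_k(x_k)$, so that the formula defining $\psi$ really does give a function on $\Gamma_1/\Gamma_1' \times \cdots \times \Gamma_k/\Gamma_k'$.

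For the multi-homomorphism property of $\psi$, I would just check the $i$th slot: given $\rho_i(x_i)$ and $\rho_i(y_i)$, we have $\rho_i(x_i)\rho_i(y_i) = \rho_i(x_i y_i)$, so by definition of $\psi$ and the homomorphism property of $\varphi$ in the $i$th variable,
\[
\psi(\dots, \rho_i(x_i)\rho_i(y_i), \dots) = \varphi(\dots, x_i y_i, \dots) = \varphi(\dots, x_i, \dots)\,\varphi(\dots, y_i, \dots),
\]
which equals $\psi(\dots, \rho_i(x_i), \dots)\,\psi(\dots, \rho_i(y_i), \dots)$.

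Finally, the image identity $\varphi(V_1, \dots, V_k) = \psi(H_1, \dots, H_k)$ is a direct unpacking: for any $v_i \in V_i$, we have $\rho_i(v_i) \in \rho_i(V_i) = H_i$ and $\varphi(v_1, \dots, v_k) = \psi(\rho_1(v_1), \dots, \rho_k(v_k))$, giving one inclusion; conversely, given $h_i \in H_i$, choose any $v_i \in V_i$ with $\rho_i(v_i) = h_i$ (possible since $\rho_i(V_i) = H_i$), and then $\psi(h_1, \dots, h_k) = \varphi(v_1, \dots, v_k) \in \varphi(V_1, \dots, V_k)$. The only conceptually substantive step here is the one-coordinate-at-a-time well-definedness argument, and even that is essentially forced on us by the hypotheses; I do not expect any serious obstacle.
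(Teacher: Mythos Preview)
Your proposal is correct and is exactly the routine verification one would expect; the paper in fact leaves this lemma as ``a simple exercise'' and gives no proof of its own, so there is nothing further to compare against.
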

\begin{proof}[Proof of Proposition \ref{prop:prod.of.approx.grps.tor.free}]
Let $X_0,\ldots,X_r$ be the subsets of $G$, and $\tilde H$ the finite subgroup of $G/\Gamma_2(G)$, given by Proposition \ref{prop:syros}. By Proposition \ref{prop:syros} it suffices to show that $[X_0,\ldots,X_0]_s=\{1\}$. However, it follows from Lemmas \ref{lem:comms.are.homs} and \ref{lem:induced.multi.hom} that there is some function
\[
\psi:G/\Gamma_2(G)\times\cdots\times G/\Gamma_2(G)\to G
\]
that is a homomorphism in each variable and such that
\begin{equation}\label{eq:syros}
[X_0,\ldots,X_0]_s=\psi(\tilde H,\ldots,\tilde H)=\bigcup_{h_i\in\tilde H}\psi(\tilde H,h_2,\ldots,h_s).
\end{equation}
Since $\psi$ is a homomorphism in the first variable, each set $\psi(\tilde H,h_2,\ldots,h_s)$ is a finite subgroup, and the fact that $G$ is torsion free implies that each of these finite subgroups is trivial. Given (\ref{eq:syros}), this implies that $[X_0,\ldots,X_0]_s=\{1\}$, as desired, and so the proposition is proved.
\end{proof}
\begin{proof}[Proof of Proposition~\ref{prop:tor.free.inside.prod.of.lower.step}]
Proposition~\ref{prop:prod.of.approx.grps.tor.free} gives an integer
\begin{equation}\label{eq:tor.free.inside.prod.of.lower.step.1}
k\le K^{O(1)}
\end{equation}
and $K^{O(1)}$-approximate groups
\begin{equation}\label{eq:tor.free.inside.prod.of.lower.step.2}
X_0,\ldots,X_k\subset A^{O(1)},
\end{equation}
each of which generates a subgroup of step less than $s$, such that
\begin{equation}\label{eq:tor.free.inside.prod.of.lower.step.3}
|X_0\cdots X_k|\ge\exp(-K^{O(1)})|A|.
\end{equation}
By (\ref{eq:tor.free.inside.prod.of.lower.step.1}) and (\ref{eq:tor.free.inside.prod.of.lower.step.2}) we also have
\begin{equation}\label{eq:tor.free.inside.prod.of.lower.step.4}
X_0\cdots X_k\subset A^{K^{O(1)}}.
\end{equation}
In light of (\ref{eq:tor.free.inside.prod.of.lower.step.3}) and (\ref{eq:tor.free.inside.prod.of.lower.step.4}), applying Proposition~\ref{prop:chang} to $X_0\cdots X_k$ with $M=K^{O(1)}$ and $M'=\exp(K^{O(1)})$ then yields sets
\begin{equation}\label{eq:tor.free.inside.prod.of.lower.step.4.5}
S_1,\ldots,S_t\subset A
\end{equation}
with
\begin{equation}\label{eq:tor.free.inside.prod.of.lower.step.5}
t\le K^{O(1)}
\end{equation}
and
\begin{equation}\label{eq:tor.free.inside.prod.of.lower.step.6}
|S_i|\le2K
\end{equation}
such that
\begin{equation}\label{eq:tor.free.inside.prod.of.lower.step.7}
A\subset S_{t-1}^{-1}\cdots S_1^{-1}X_k\cdots X_0X_0\cdots X_kS_1\cdots S_t.
\end{equation}
Let $s_1,s_2,\ldots,s_m$ be a list of all elements of $S_1$, followed by those of $S_2$, then those of $S_3$ and so on all the way up to the elements of $S_t$, noting that if an element belongs to more than one set $S_i$ then it will appear on the list more than once. Note that (\ref{eq:tor.free.inside.prod.of.lower.step.5}) and (\ref{eq:tor.free.inside.prod.of.lower.step.6}) imply that
\begin{equation}\label{eq:tor.free.inside.prod.of.lower.step.8}
m\le K^{O(1)}.
\end{equation}
It follows from (\ref{eq:tor.free.inside.prod.of.lower.step.7}) that
\begin{equation}\label{eq:tor.free.inside.prod.of.lower.step.9}
A\subset P(s_m;1)\cdots P(s_1;1)X_k\cdots X_0X_0\cdots X_kP(s_1;1)\cdots P(s_m;1).
\end{equation}
For each $i$ the set $P(s_i;1)$ is an abelian 2-approximate group, and by (\ref{eq:tor.free.inside.prod.of.lower.step.4.5}) it lies inside $A$, and so the desired result follows from (\ref{eq:tor.free.inside.prod.of.lower.step.1}), (\ref{eq:tor.free.inside.prod.of.lower.step.2}), (\ref{eq:tor.free.inside.prod.of.lower.step.8}) and (\ref{eq:tor.free.inside.prod.of.lower.step.9}).
\end{proof}
\begin{corollary}\label{cor:tor.free.inside.prod.of.ab}
Let $G$ be a torsion-free $s$-step nilpotent group and suppose that $A\subset G$ is a $K$-approximate subgroup. Then there exist an integer $r\le K^{O_s(1)}$ and abelian $K^{O_s(1)}$-approximate groups $A_1,\ldots,A_r\subset A^{O_s(1)}$ such that
\[
A\subset A_1\cdots A_r.
\]
\end{corollary}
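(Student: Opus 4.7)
The plan is to prove this by induction on $s$, using Proposition \ref{prop:tor.free.inside.prod.of.lower.step} as the engine at each step.

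For the base case $s=1$ the group $G$ is abelian, so $A$ itself is already an abelian $K$-approximate group and we take $r=1$ and $A_1=A$. The interesting work is the inductive step. Suppose the corollary holds for torsion-free nilpotent groups of step at most $s-1$, and let $A$ be a $K$-approximate subgroup of a torsion-free $s$-step nilpotent group $G$. First I apply Proposition \ref{prop:tor.free.inside.prod.of.lower.step} to produce an integer $r'\le K^{O(1)}$ and $K^{O(1)}$-approximate groups $B_1,\ldots,B_{r'}\subset A^{O(1)}$ such that $A\subset B_1\cdots B_{r'}$ and each $B_j$ generates a subgroup of step at most $s-1$.

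Each $B_j$ is a $K^{O(1)}$-approximate subgroup of the subgroup $\langle B_j\rangle$, which is a torsion-free nilpotent group of step at most $s-1$ (torsion-freeness is inherited from $G$). I then apply the inductive hypothesis to each $B_j$ in turn, obtaining an integer $r_j\le (K^{O(1)})^{O_{s-1}(1)}=K^{O_s(1)}$ and abelian $K^{O_s(1)}$-approximate groups $A_{j,1},\ldots,A_{j,r_j}\subset B_j^{O_{s-1}(1)}$ such that $B_j\subset A_{j,1}\cdots A_{j,r_j}$. Concatenating these factorisations gives
\[
A\subset A_{1,1}\cdots A_{1,r_1}\cdot A_{2,1}\cdots A_{2,r_2}\cdots A_{r',1}\cdots A_{r',r_{r'}},
\]
which is a product of $r:=r_1+\cdots+r_{r'}\le K^{O(1)}\cdot K^{O_{s-1}(1)}=K^{O_s(1)}$ abelian $K^{O_s(1)}$-approximate groups. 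Since $B_j\subset A^{O(1)}$ and each $A_{j,k}\subset B_j^{O_{s-1}(1)}$, we have $A_{j,k}\subset A^{O_s(1)}$, as required.

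There is essentially no obstacle here beyond bookkeeping: Proposition \ref{prop:tor.free.inside.prod.of.lower.step} already supplies the single non-trivial reduction from step $s$ to step $s-1$, and the inductive step is a transparent repeated application. The only thing to check carefully is that the constants compose correctly, namely that a polynomial-in-$K$ bound raised to a polynomial-in-$K$ power remains polynomial in $K$ (with an exponent that now depends on $s$), and that the exponent in $A^{O_s(1)}$ grows only multiplicatively in the recursion depth. Both follow from the fact that the $O$-constants at each level of the recursion are independent of $K$.
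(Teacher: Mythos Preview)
Your proof is correct and is exactly the argument the paper has in mind: the paper's own proof is the single line ``This follows from Proposition~\ref{prop:tor.free.inside.prod.of.lower.step} and induction on $s$,'' and you have simply written out that induction explicitly, with the correct bookkeeping of constants.
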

\begin{proof}
This follows from Proposition~\ref{prop:tor.free.inside.prod.of.lower.step} and induction on $s$.
\end{proof}
By Theorem~\ref{thm:ab.freiman}, of course, we can place each abelian approximate group $A_i$ given by Corollary~\ref{cor:tor.free.inside.prod.of.ab} inside an abelian progression, and hence place $A$ itself inside a product of abelian progressions. We now define a structure that encompasses such a product.
\begin{definition}[Ordered progression]\label{def:ord.prog}Let $x_1,\ldots,x_r\in G$ and let $L_1,\ldots,L_r\in\N$. Then the \emph{ordered progression} on generators $x_1,\ldots,x_r$ with lengths $L_1,\ldots,L_r$ is defined to be
\[
P_\textup{ord}(x_1,\ldots,x_r;L):=\{x_1^{l_1}\cdots x_r^{l_r}:|l_i|\le L_i\}.
\]
Define $r$ to be the \emph{rank} of $P_\textup{ord}(x_1,\ldots,x_r;L)$.
\end{definition}
The nilprogressions, nilpotent progressions and ordered progressions defined in Definitions~\ref{def:nilprog}, \ref{def:nilp.prog} and \ref{def:ord.prog} are not, in general, the same objects. Proposition \ref{prop:prim.prog.to.nilp.prog} in Appendix \ref{sec:rough.equiv.prog}, however, shows that if $G$ is an $s$-step nilpotent group, and if $x_1,\ldots,x_r\in G$ and $L_1,\ldots,L_r\in\N$, then
\begin{equation}\label{eq:equiv.of.progs}
P_\textup{ord}(x_1,\ldots,x_r;L)\subset P^*(x_1,\ldots,x_r;L)\subset P(x_1,\ldots,x_r;L)\subset P_\textup{ord}(x_1,\ldots,x_r;L)^{r^{O_s(1)}},
\end{equation}
and so we can at least pass from one type of progression to another fairly efficiently.
\begin{proof}[Proof of Theorem~\ref{thm:freiman.tor.free.nilp}]By Corollary~\ref{cor:tor.free.inside.prod.of.ab} there exist an integer
\begin{equation}\label{eq:freiman.tor.free.nilp.0}
r\le K^{O_s(1)}
\end{equation}
and abelian $K^{O_s(1)}$-approximate groups
\begin{equation}\label{eq:freiman.tor.free.nilp.1}
A_1,\ldots,A_r\subset A^{O_s(1)}
\end{equation}
satisfying
\begin{equation}\label{eq:freiman.tor.free.nilp.2}
A\subset A_1\cdots A_r.
\end{equation}
Theorem~\ref{thm:ab.freiman} and the assumption that $G$ is torsion free imply that there exist, for each $j=1,\ldots,r$, an abelian progression $P^{(j)}:=P(x_1^{(j)},\ldots,x_{k_j}^{(j)};L^{(j)})$ satisfying
\begin{equation}\label{eq:freiman.tor.free.nilp.3}
k_j\le K^{O_s(1)}
\end{equation}
and
\begin{equation}\label{eq:freiman.tor.free.nilp.4}
A_j\subset P^{(j)}\subset A_j^{K^{O_s(1)}}.
\end{equation}
Combining (\ref{eq:freiman.tor.free.nilp.0}), (\ref{eq:freiman.tor.free.nilp.1}), (\ref{eq:freiman.tor.free.nilp.2}) and (\ref{eq:freiman.tor.free.nilp.4}) then implies that $A\subset P^{(1)}\cdots P^{(r)}\subset A^{K^{O_s(1)}}$, and so writing
\[
(y_1,\ldots,y_m):=(x_1^{(1)},\ldots,x_{k_1}^{(1)},x_1^{(2)},\ldots,x_{k_2}^{(2)},\ldots,x_1^{(r)},\ldots,x_{k_r}^{(r)})
\]
and
\[
(N_1,\ldots,N_m):=(L_1^{(1)},\ldots,L_{k_1}^{(1)},L_1^{(2)},\ldots,L_{k_2}^{(2)},\ldots,L_1^{(r)},\ldots,L_{k_r}^{(r)}),
\]
gives
\begin{equation}\label{eq:freiman.tor.free.nilp.5}
A\subset P_\textup{ord}(y_1,\ldots,y_m;N)\subset A^{K^{O_s(1)}}.
\end{equation}
Furthermore,
\begin{equation}\label{eq:freiman.tor.free.nilp.6}
m\le K^{O_s(1)}
\end{equation}
by (\ref{eq:freiman.tor.free.nilp.0}) and (\ref{eq:freiman.tor.free.nilp.3}). Theorem~\ref{thm:freiman.tor.free.nilp} then follows from (\ref{eq:equiv.of.progs}), (\ref{eq:freiman.tor.free.nilp.5}) and (\ref{eq:freiman.tor.free.nilp.6}).
\end{proof}
\section{Strategy of the general argument}\label{sec:strategy}
Let us pause at this point to consider how the argument we used in proving Theorem \ref{thm:freiman.tor.free.nilp} might be adapted to work in an arbitrary nilpotent group. The key observation of the torsion-free argument was Proposition \ref{prop:tor.free.inside.prod.of.lower.step}, which essentially allowed us to express an approximate group of step $s$ as a product of $O_K(1)$ approximate groups of step at most $s-1$. However, such a statement is simply not true in the absence of the torsion-free assumption, as illustrated by the following example.
\begin{example}\label{eg:prod.of.lower.step}Let $F$ be the free product of $n$ copies of the cyclic group with two elements, and let $G$ be the quotient $F/\Gamma_3(F)$. Then $G$ is a genuine $2$-step nilpotent group and, as a finitely generated nilpotent group each of whose generators is of finite order, it is finite. In particular, $G$ is certainly a $2$-step $K$-approximate group for any $K\ge1$. However, $G$ cannot be expressed as a product of $O(1)$ abelian sets as $n\to\infty$.
\end{example}
Having observed that the statement of Proposition \ref{prop:tor.free.inside.prod.of.lower.step} fails in an arbitrary nilpotent group, let us examine where the proof breaks down. The key to the proof of Proposition \ref{prop:tor.free.inside.prod.of.lower.step} was Proposition \ref{prop:prod.of.approx.grps.tor.free}, which allowed us to place a product of $K^{O(1)}$ approximate groups densely inside $A^{K^{O(1)}}$. The first part of that argument was valid in an arbitrary nilpotent group, and separated out as Proposition \ref{prop:syros}. However, the final step in the proof of Proposition \ref{prop:prod.of.approx.grps.tor.free}, in which we showed, in the notation used there, that $[X_0,\ldots,X_0]_s=\{1\}$, relied on the assumption that $G$ was torsion free. 

Nonetheless, in the context of Theorem~\ref{thm:freiman.p-group} all is not lost, as whilst Theorem~\ref{thm:freiman.p-group} is stronger than Theorem \ref{thm:freiman.tor.free.nilp}, the conclusion of Theorem~\ref{thm:freiman.p-group} in a group with elements of finite order is, in a certain sense, weaker than the conclusion of Theorem \ref{thm:freiman.tor.free.nilp}. Indeed, where Theorem \ref{thm:freiman.tor.free.nilp} required us to show that a $K$-approximate group $A$ was dense inside a nilprogression $P^*$, Theorem~\ref{thm:freiman.p-group} requires us only to show that the image of $A$ in a quotient by some normal subgroup $H\subset A^{K^{O(1)}}$ is dense inside a nilprogression. This allows us to make the following reduction.
\begin{lemma}\label{lem:normal.reduction}Let $G$ and $A$ be as in Theorem~\ref{thm:freiman.p-group}, and assume additionally that $G$ is generated by $A$. Suppose that $N\lhd G$ is a normal subgroup and that $N\subset A^M$. Suppose Theorem~\ref{thm:freiman.p-group} holds in the quotient $G/N$ with some implied constants. Then Theorem~\ref{thm:freiman.p-group} holds for $A$ in $G$ with the implied constants increased by at most $\log_2M$.
\end{lemma}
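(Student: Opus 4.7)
The plan is to apply the assumed version of Theorem~\ref{thm:freiman.p-group} in $G/N$ and pull the output back through the quotient map $\pi:G\to G/N$. By Remark~\ref{rem:image.app.grp}, $\pi(A)$ is a $K$-approximate subgroup of the $s$-step nilpotent group $G/N$, so the hypothesis supplies a subgroup $\bar H$ of $G/N$ normalised by $\pi(A)$, an integer $k\le K^{O_s(1)}$, elements $\bar x_1,\ldots,\bar x_k\in G/N$ and lengths $L$ such that
\[
\pi(A)\subset\bar H\bar P^*\subset\bar H\bar P\subset\pi(A)^{K^{O_s(1)}},
\]
where $\bar P^*:=P^*(\bar x_1,\ldots,\bar x_k;L)$ and $\bar P:=P(\bar x_1,\ldots,\bar x_k;L)$.

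Next I would construct the lifted data. Set $H:=\pi^{-1}(\bar H)$; then automatically $N\subset H$, and $H$ is normalised by $A$ because $\bar H$ is normalised by $\pi(A)$. For each $i$, the containment $\bar x_i\in\pi(A)^{K^{O_s(1)}}=\pi(A^{K^{O_s(1)}})$ lets me pick a lift $x_i\in A^{K^{O_s(1)}}$, after which I put $P^*:=P^*(x_1,\ldots,x_k;L)$ and $P:=P(x_1,\ldots,x_k;L)$ in $G$. Since nilprogressions and nilpotent progressions are just sets of words in the generators, $\pi(P^*)=\bar P^*$ and $\pi(P)=\bar P$.

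The crucial lifting identity is $\pi^{-1}(\bar H\bar P^*)=HP^*$, and similarly for $\bar H\bar P$. The inclusion $\supseteq$ is immediate, and for $\subseteq$: if $\pi(g)=\bar h\bar p^*\in\bar H\bar P^*$, write $\bar p^*=\pi(p^*)$ with $p^*\in P^*$; then $g(p^*)^{-1}\in\pi^{-1}(\bar h)\subset H$, so $g\in HP^*$. (The assumption $N\subset H$, combined with the normality of $N$ in $G$, is precisely what makes this clean.) Applying $\pi^{-1}$ to each of the three containments from the quotient, and using $N\subset A^M$ at the top end, yields
\[
A\subset HP^*\subset HP\subset A^{K^{O_s(1)}}N\subset A^{K^{O_s(1)}+M}.
\]

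The last step is bookkeeping of the exponent. For $K\ge 2$ and $M\ge 2$ one has $K^{\log_2 M}\ge M$ and $K^C\cdot M\ge K^C+M$, so $K^C+M\le K^{C+\log_2 M}$; hence $K^{O_s(1)}+M$ fits inside $K^{O_s(1)}$ with the implied constant increased by at most $\log_2 M$, which is exactly the bound claimed. I do not anticipate any genuine obstacle here — the entire proof is a formal pull-back plus a short arithmetic check — but the one step that deserves care is the pull-back identity $\pi^{-1}(\bar H\bar P^*)=HP^*$, which is where the hypothesis $N\subset H$ (arranged by the definition $H:=\pi^{-1}(\bar H)$) does the real work.
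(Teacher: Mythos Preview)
Your proposal is correct and follows essentially the same approach as the paper's own argument, which is presented there only as a proof sketch: apply the theorem in $G/N$, pull back the subgroup and nilprogression through the quotient map, and use $N\subset A^M$ to absorb the kernel into an additional $A^M$ factor. Your version is in fact more complete, since you explicitly verify the pull-back identity $\pi^{-1}(\bar H\bar P^*)=HP^*$, handle the nilpotent progression $P$ alongside $P^*$, and carry out the arithmetic showing $K^{C}+M\le K^{C+\log_2 M}$, none of which the paper spells out.
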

\begin{proof}[Proof sketch]Write $\rho:G\to G/N$ for the canonical projection. Since $\rho(A)$ is a $K$-approximate group, the fact that Theorem~\ref{thm:freiman.p-group} holds in $G/N$ implies that there is a normal subgroup $H$ of $G/N$ and a nilprogression $P^*\subset G/N$ such that
\[
\rho(A)\subset HP^*\subset\rho(A^{K^{O(1)}}).
\]
However, the fact that $N\subset A^M$ implies that
\[
A\subset\rho^{-1}(HP^*)\subset A^{K^{O(1)}+M},
\]
and it is straightforward to check that $\hat H=\rho^{-1}(H)$ is a normal subgroup of $G$ and that there is a nilprogression $\hat{P^*}\subset G$ of the same rank as $P^*$ and with the property that $\rho^{-1}(HP^*)=\hat H\hat{P^*}$.
\end{proof}
The upshot of this is that if there is some subset $B$ of $A$ that is difficult to analyse, then it is sufficient to find some normal subgroup $N$ of $G$ inside $A^{K^{O(1)}}$ that contains $B$ and restrict attention to the quotient $G/N$, in which the image of $B$ is trivial.

It turns out that this is indeed possible in the case of the troublesome set $X_0$, and ultimately yields the following generalisation of Proposition \ref{prop:tor.free.inside.prod.of.lower.step}.
\begin{prop}[Key proposition]\label{prop:p.grp.inside.prod.of.lower.step}
Let $m>0,s,\hat s\ge2$ be integers. Let $G$ be an $s$-step nilpotent group generated by a $K$-approximate group $A$, and let $\tilde A$ be an $\tilde s$-step $\tilde K$-approximate group inside $A^m$. Then there exist an integer $r\le\tilde K^{O(1)}$, a normal subgroup $N$ inside $A^{K^{O_{m,s}(1)}}$ and $\tilde K^{O(1)}$-approximate groups $A_1,\ldots,A_r\subset\tilde A^{O(1)}$ such that $[A_i,\ldots,A_i]_{\tilde{s}}\subset N$ for all $i$, and such that
\[
\tilde A\subset A_1\cdots A_r.
\]
\end{prop}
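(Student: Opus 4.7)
The natural approach is to follow the proof of Proposition~\ref{prop:tor.free.inside.prod.of.lower.step} as closely as possible, replacing the triviality statement $[X_0,\ldots,X_0]_{\tilde s}=\{1\}$ (which fails in the presence of torsion) by a containment in a suitable normal subgroup $N$ of $G$. First, I would apply Proposition~\ref{prop:syros} to $\tilde A$, obtaining $\tilde K^{O(1)}$-approximate groups $X_0,X_1,\ldots,X_{r'}\subset\tilde A^{O(1)}$ with $r'\le\tilde K^{O(1)}$, such that each $X_j$ with $j\ge 1$ generates a subgroup of step $<\tilde s$, the image $\pi(X_0)$ is a finite subgroup $\tilde H$ of $\langle\tilde A\rangle/\Gamma_2(\langle\tilde A\rangle)$, and $|X_0\cdots X_{r'}|\ge\exp(-\tilde K^{O(1)})|\tilde A|$. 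Chang's covering (Proposition~\ref{prop:chang}), applied exactly as in the proof of Proposition~\ref{prop:tor.free.inside.prod.of.lower.step}, produces sets $S_1,\ldots,S_t\subset\tilde A$ with $t\le\tilde K^{O(1)}$ and $|S_\ell|\le 2\tilde K$; listing the $S_\ell$ as singletons yields $\tilde A\subset A_1\cdots A_r$ with $r\le\tilde K^{O(1)}$, where each $A_i$ is either a singleton, one of the $X_j$ with $j\ge 1$, or $X_0$.

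For every $A_i$ that is either a singleton (hence abelian) or an $X_j$ with $j\ge 1$, $[A_i,\ldots,A_i]_{\tilde s}=\{1\}$ trivially since $\tilde s\ge 2$ exceeds the step of $\langle A_i\rangle$. The remaining task is to find a normal subgroup $N\lhd G$ with $N\subset A^{K^{O_{m,s}(1)}}$ containing $T:=[X_0,\ldots,X_0]_{\tilde s}$. Exactly as in the torsion-free case, Lemmas~\ref{lem:comms.are.homs} and~\ref{lem:induced.multi.hom} exhibit $T$ as the image $\psi(\tilde H,\ldots,\tilde H)$ of a map $\psi:\tilde H^{\tilde s}\to\Gamma_{\tilde s}(\langle\tilde A\rangle)$ that is a homomorphism in each variable. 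By multilinearity and finiteness of $\tilde H$, for every $i$ we have $\psi(h_1,\ldots,h_{\tilde s})^{\mathrm{ord}(h_i)}=\psi(h_1,\ldots,h_i^{\mathrm{ord}(h_i)},\ldots,h_{\tilde s})=1$, so $T$ is contained in the torsion subgroup of the finitely generated abelian group $\Gamma_{\tilde s}(\langle\tilde A\rangle)$.

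The natural choice is then $N:=\langle T\rangle_G$, the normal closure of $T$ in $G$. Because $T\subset\Gamma_{\tilde s}(G)$ and $G$ is $s$-step nilpotent, conjugating $t\in T$ by $g\in G$ produces $t\cdot[t,g]$ with $[t,g]\in\Gamma_{\tilde s+1}(G)$, and iterating at most $s-\tilde s$ times reaches $\Gamma_{s+1}(G)=\{1\}$. Thus $N$ is generated as a group by iterated commutators $[t,a_{i_1},\ldots,a_{i_k}]$ with $t\in T$, $a_{i_j}\in A$, and $k\le s-\tilde s$; expanding each such commutator using $t\in A^{O(m)}$ shows that every generator of $N$ lies in $A^{O_{m,s}(1)}$.

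The principal obstacle is that the subgroup generated by a set of elements in $A^{O_{m,s}(1)}$ can a priori be arbitrarily large, so one must separately bound $|N|$. Once one knows $|N|\le\exp(K^{O_{m,s}(1)})$, a standard doubling argument places $N$ inside $A^{O_{m,s}(\log|N|)}\subset A^{K^{O_{m,s}(1)}}$. The bound on $|N|$ should come from the finite-order structure of $T$ propagating through iterated commutators — each $[t,a_{i_1},\ldots,a_{i_k}]$ inherits finite order from $t$ via multilinearity and lives in a torsion section $\Gamma_j(G)/\Gamma_{j+1}(G)$ — combined with the quantitative torsion control for nilpotent approximate groups furnished by the $p$-group machinery of Appendix~\ref{sec:p.groups}. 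This cardinality bound is where I expect the main technical work of the proof to concentrate.
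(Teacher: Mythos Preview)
Your overall plan---apply Proposition~\ref{prop:syros} to $\tilde A$, then Chang's covering exactly as in the torsion-free case, leaving only the task of trapping $[X_0,\ldots,X_0]_{\tilde s}$ inside a normal subgroup $N\subset A^{K^{O_{m,s}(1)}}$---is exactly the structure of the paper's proof. The paper packages the last step as Proposition~\ref{prop:lower.step.in.quotient}, which is in turn deduced from Proposition~\ref{prop:grp.in.normal'}.

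The genuine gap is in your proposed construction of $N$. Taking the normal closure $\langle T\rangle_G$ and trying to bound $|N|$ is not the route that works: knowing that every element of $T$ has finite order gives no a priori control on $|N|$ in terms of $K$ (the orders of the torsion elements could be enormous relative to $K$), and even granting a bound $|N|\le\exp(K^{O_{m,s}(1)})$ your ``standard doubling argument'' does not give $N\subset A^{O(\log|N|)}$---a subgroup generated by a symmetric set $S$ is only guaranteed to lie in $S^{|N|}$, not $S^{\log|N|}$. So both halves of that paragraph fail.

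The mechanism the paper actually uses is quite different and does not pass through $|N|$ at all. The key observation is that the image $\psi(\tilde H,\ldots,\tilde H)$ lands in a \emph{central} piece of $G$ that, by Lemma~\ref{lem:approx.grp.in.[g,g]} and the Green--Ruzsa theorem (Theorem~\ref{thm:ab.freiman}), sits inside a coset progression $ZQ$ with $\operatorname{rank}Q\le K^{O_{m,s}(1)}$. Passing to $G/Z$, the image lies in an abelian group of rank $\le K^{O_{m,s}(1)}$, and then Proposition~\ref{prop:img.of.multi.hom} (the multi-homomorphism lemma) gives $\langle\psi(\tilde H,\ldots,\tilde H)\rangle\subset\psi(\tilde H,\ldots,\tilde H)^{K^{O_{m,s}(1)}}\subset A^{K^{O_{m,s}(1)}}$ directly, with no cardinality estimate needed. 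To make the resulting subgroup normal in $G$ one has to repeat this for all commutator forms $\beta(\psi(\cdot),G,\ldots,G)$ of a fixed weight $n$ and induct downward on $n$; this is the content of Proposition~\ref{prop:grp.in.normal'}. The rank-based containment from Proposition~\ref{prop:img.of.multi.hom} is precisely the ``$p$-group machinery'' you gesture at, but it has to be combined with Green--Ruzsa to produce the rank bound in the first place, and that combination is the missing idea in your proposal.
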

\begin{remark}If $G$ is assumed to be torsion free in Proposition \ref{prop:p.grp.inside.prod.of.lower.step} then the finite subgroup $N$ is automatically trivial, and so we recover the statement of Proposition~\ref{prop:tor.free.inside.prod.of.lower.step}.
\end{remark}
The next few sections are largely dedicated to the proof of Proposition \ref{prop:p.grp.inside.prod.of.lower.step}, with the conclusion of that proof appearing in Section \ref{sec:p.group.proof}.

Before we embark on the details in full, let us give a flavour of the overall argument in a particularly simple case, namely that of a 2-step $p$-group. In that case, our aim is essentially to identify a normal subgroup $N\lhd G$ inside $A^{K^{O(1)}}$ that contains $[X_0,X_0]$. Since $[X_0,X_0]$ is central, the group $\langle[X_0,X_0]\rangle$ generated by $[X_0,X_0]$ will automatically be normal, and so in fact it is sufficient to show the following.
\begin{lemma}\label{lem:2.step.p.grp.quotient}
In the case that $G$ is a 2-step $p$-group generated by $A$ we have $\langle[X_0,X_0]\rangle\subset A^{K^{O(1)}}$.
\end{lemma}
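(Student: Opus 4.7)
The plan is to exploit the bilinear structure of the commutator in a $2$-step group together with the $p$-group structure of $\tilde H$ to control the subgroup $\langle[X_0,X_0]\rangle$.

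First I would use Lemmas \ref{lem:comms.are.homs} and \ref{lem:induced.multi.hom}, exactly as in the conclusion of the proof of Proposition \ref{prop:prod.of.approx.grps.tor.free}, to obtain a bihomomorphism $\psi:\tilde G\times\tilde G\to\Gamma_2(G)$, where $\tilde G=G/\Gamma_2(G)$, satisfying $[X_0,X_0]=\psi(\tilde H,\tilde H)$. Because $G$ is $2$-step, $\Gamma_2(G)$ is central and abelian, so $\langle[X_0,X_0]\rangle$ is an abelian $p$-subgroup of $\Gamma_2(G)$; in fact it coincides with the derived subgroup $[\langle X_0\rangle,\langle X_0\rangle]$, which is also the image of the induced homomorphism $\tilde H\otimes\tilde H\to\Gamma_2(G)$.

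Second, since $\tilde H\subset\pi(A^4)$ is a finite abelian $p$-group, I would pick generators $h_1,\dots,h_k$ for $\tilde H$ with $h_i$ of order $p^{n_i}$, and take lifts $g_i:=\phi(h_i)\in A^4$, where $\phi$ is the right inverse from Lemma \ref{lem:splitting}. Then every generator of $\langle[X_0,X_0]\rangle$ has the form $[g_i,g_j]\in A^{16}$. A crucial $p$-group calculation is that, because $\pi(g_i^{p^{n_i}})=h_i^{p^{n_i}}=1$, the element $g_i^{p^{n_i}}$ lies in $\Gamma_2(G)$ and is therefore central; bilinearity of $[\,\cdot\,,\,\cdot\,]$ in the $2$-step setting then gives
\[
[g_i,g_j]^{p^{n_i}}=[g_i^{p^{n_i}},g_j]=1,
\]
so $[g_i,g_j]$ has order dividing $p^{\min(n_i,n_j)}$. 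Combined with the fact that $\langle[X_0,X_0]\rangle$ is abelian, this means an arbitrary element can be written as $\prod_{i,j}[g_i,g_j]^{c_{ij}}$ with bounded exponents.

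The main obstacle is that neither the number $k$ of generators of $\tilde H$ nor the exponents $p^{n_i}$ are a priori polynomial in $K$, so the naive count $\sum_{i,j}|c_{ij}|$ of multiplications in $A^{16}$ needed to realise a given element of $\langle[X_0,X_0]\rangle$ is far too large. To overcome this, I would exploit the $p$-group structure developed in Appendix \ref{sec:p.groups}, supplemented by the approximate group statements of Section~\ref{sec:tools}. Concretely, I would apply Theorem \ref{thm:ab.freiman} to the $K^{O(1)}$-approximate group $A^{O(1)}\cap\Gamma_2(G)$ (which contains $[X_0,X_0]$ by Lemma \ref{lem:approx.grp.in.[g,g]}), noting that in a $p$-group the progression part of the resulting coset progression can be absorbed into honest cyclic subgroups of bounded $p$-rank; the finite subgroup part is already inside $A^{O(1)}$ by Theorem \ref{thm:ab.freiman}. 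Together with the bilinear identity above, this should replace the arbitrary generators $h_i$ with a collection of $K^{O(1)}$ lifts $\tilde g_1,\dots,\tilde g_r\in A^{O(1)}$ whose pairwise commutators again generate $\langle[X_0,X_0]\rangle$ but whose powers can be realised in $A^{O(1)}$ (essentially, by pushing the large-exponent contributions into the subgroup already sitting in $A^{O(1)}$). Chasing the resulting product expression then shows that every element of $\langle[X_0,X_0]\rangle$ lies in $A^{K^{O(1)}}$, as required.
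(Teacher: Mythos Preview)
Your setup is essentially the paper's, and you correctly identify the obstacle, but the resolution in your final paragraph is aimed at the wrong object and does not go through as written. The rank bound coming out of Theorem~\ref{thm:ab.freiman} is a bound on the rank of $\langle Q\rangle$ inside $\Gamma_2(G)$, not on $\tilde H$; there is no reason $\tilde H$ should have rank $K^{O(1)}$, so you cannot ``replace the arbitrary generators $h_i$'' by $K^{O(1)}$ lifts whose pairwise commutators still generate $\langle[X_0,X_0]\rangle$. Nor is there any mechanism for producing lifts $\tilde g\in A^{O(1)}$ all of whose powers remain in $A^{O(1)}$.

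The paper instead works directly with $[X_0,X_0]$, and the observation you are circling but not quite landing on is that $[X_0,X_0]$ is a \emph{union of subgroups} of $\Gamma_2(G)$, each already contained in $A^{O(1)}$. Concretely, for fixed $h\in\tilde H$ the set $\psi(\tilde H,h)$ is the image of a homomorphism from the finite group $\tilde H$ and hence a subgroup; and since $\tilde H\subset\pi(A^4)$ one may compute it as $[\phi(\tilde H),\phi(h)]\subset[A^4,A^4]\subset A^{16}$. (This is the correct way to use your identity $[g_i,g_j]^m=[g_i^m,g_j]$: it shows $\langle[g_i,g_j]\rangle\subset[X_0,X_0]$, not merely that $[g_i,g_j]$ has finite order.) Now quotient by the subgroup $U\subset A^{O(1)}$ coming from Theorem~\ref{thm:ab.freiman}: the image of $[X_0,X_0]$ is still a union of subgroups, but now lives in the abelian $p$-group $\langle\tau(Q)\rangle$ of rank at most $K^{O(1)}$. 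Lemma~\ref{lem:univ.subgroup} (a direct consequence of the Burnside basis theorem from Appendix~\ref{sec:p.groups}) then gives $\langle\tau([X_0,X_0])\rangle\subset\tau([X_0,X_0])^{K^{O(1)}}$, and pulling back yields $\langle[X_0,X_0]\rangle\subset[X_0,X_0]^{K^{O(1)}}U\subset A^{K^{O(1)}}$.
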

Let us sketch a proof of Lemma \ref{lem:2.step.p.grp.quotient}. We know from the proof of Proposition \ref{prop:prod.of.approx.grps.tor.free} that $[X_0,X_0]$ is a union of finite subgroups of the abelian group $\Gamma_2(G)$. To deal with such sets we have the following result.
\begin{lemma}\label{lem:univ.subgroup}
Let $\Gamma$ be an abelian $p$-group of rank $r$ (written multiplicatively) and suppose that $X\subset\Gamma$ is a union of subgroups of $\Gamma$. Then
\[
\langle X\rangle\subset X^r.
\]
\end{lemma}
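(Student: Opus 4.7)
The plan is to reduce to the case $\langle X\rangle=\Gamma$ and then exhibit subgroups $K_1,\ldots,K_r\subseteq X$ of $\Gamma$ whose product is all of $\Gamma$; granted such $K_i$, every $g\in\Gamma=K_1\cdots K_r$ factors as $g=k_1\cdots k_r$ with $k_i\in K_i\subseteq X$, and so $g\in X^r$. The reduction is harmless: $\langle X\rangle$ is itself an abelian $p$-group of rank at most $r$, and $X$ is a union of subgroups of $\langle X\rangle$, so it suffices to prove the inclusion in the reduced setting.

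To locate the $K_i$ I would pass to the Frattini quotient $V:=\Gamma/\Gamma^p$, which is an $\F_p$-vector space of dimension $r$, and let $\pi\colon\Gamma\to V$ be the projection. Each subgroup $K\subseteq X$ maps to an $\F_p$-subspace $\pi(K)\subseteq V$, so $\pi(X)$ is a union of subspaces of $V$; moreover it spans $V$, since $\pi(\langle X\rangle)=V$. Building a basis greedily, I would pick $\bar x_1,\ldots,\bar x_r\in\pi(X)$ with each $\bar x_i$ lying outside the span of its predecessors---this is possible at every stage because $\pi(X)$, being a union of subspaces that spans $V$, cannot be contained in any proper subspace of $V$. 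Lifting each $\bar x_i$ to $x_i\in X$ and invoking the union-of-subgroups hypothesis to choose a subgroup $K_i\subseteq X$ of $\Gamma$ containing $x_i$, I would then have $\pi(K_1)\cdots\pi(K_r)\supseteq\langle\bar x_1,\ldots,\bar x_r\rangle=V$, which translates to $K_1\cdots K_r\cdot\Gamma^p=\Gamma$.

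The final step is an application of the Burnside basis theorem: since $\Gamma$ is a finitely generated (hence finite) $p$-group with Frattini subgroup $\Gamma^p$, any subset whose image generates $\Gamma/\Gamma^p$ already generates $\Gamma$. Applied to $K_1\cup\cdots\cup K_r$, this promotes the previous equality to $K_1\cdots K_r=\Gamma$ (noting that the product of subgroups in an abelian group is itself a subgroup), completing the proof. I expect this last step to be the only real subtlety---one must know that spanning modulo $\Gamma^p$ implies spanning in $\Gamma$---but for abelian $p$-groups of finite rank this is standard (essentially Nakayama's lemma for $\Gamma$ viewed as a $\Z_p$-module), so no genuine obstacle arises.
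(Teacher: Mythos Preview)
Your proof is correct and follows essentially the same route as the paper: reduce to $\langle X\rangle=\Gamma$, use the Burnside basis theorem to extract $r$ elements of $X$ whose images span $\Gamma/\Gamma^p$ and hence generate $\Gamma$, and then use the union-of-subgroups hypothesis to get $r$ subgroups inside $X$ whose product is $\Gamma$. The paper streamlines the last step slightly by taking $K_i=\langle x_i\rangle$ directly (since $x_i$ lies in some subgroup contained in $X$, so does $\langle x_i\rangle$), which saves the separate choice of the $K_i$; otherwise the arguments are the same.
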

\begin{proof}
Lemma~\ref{lem:ab.subgroup.rank} implies that $\langle X\rangle$ is of rank at most $r$, and so Lemma~\ref{lem:burnside.basis} implies that there exist elements $x_1,\ldots,x_r\in X$ that generate $\langle X\rangle$. It follows that $\langle X\rangle=\langle x_1\rangle\cdots\langle x_r\rangle$. However, the assumption that $X$ is a union of subgroups implies that $\langle x_i\rangle\subset X$ for each $i$, and so $\langle x_1\rangle\cdots\langle x_r\rangle\subset X^r$ and the lemma is proved.
\end{proof}
\begin{remark}The assumption in Lemma \ref{lem:univ.subgroup} that $\Gamma$ is a $p$-group is necessary. The statement fails, for example, if $\Gamma=\Z/6\Z$ and $X$ is the union of the subgroups $\{0,2,4\}$ and $\{0,3\}$. This is not surprising in light of Remark \ref{rem:burnside.basis}.
\end{remark}
Now $[X_0,X_0]$ is a subset of $A^{O(1)}\cap\Gamma_2(G)$, which is a set with doubling constant at most $K^{O(1)}$ by Lemma \ref{lem:approx.grp.in.[g,g]}. Theorem \ref{thm:ab.freiman} therefore implies that there is a subgroup $U\subset A^{O(1)}\cap\Gamma_2(G)$ and a progression $Q$ of rank at most $K^{O(1)}$ such that $[X_0,X_0]\subset UQ$.

Now $U$ is central in $G$, and hence, in particular, normal in $G$, and so we may write $\tau:G\to G/U$ for the canonical projection. Note then that the rank bound on $Q$ implies that $[\tau(X_0),\tau(X_0)]$ lies in an abelian $p$-group of rank at most $K^{O(1)}$, and so it follows from Lemma \ref{lem:univ.subgroup} that
\[
\langle[\tau(X_0),\tau(X_0)]\rangle\subset[\tau(X_0),\tau(X_0)]^{K^{O(1)}},
\]
and hence that
\[
\langle[X_0,X_0]\rangle\subset[X_0,X_0]^{K^{O(1)}}U\subset A^{K^{O(1)}}.
\]
This was precisely the claim of Lemma \ref{lem:2.step.p.grp.quotient}, and so our sketch of the proof of that lemma is complete. In the next few sections we generalise these arguments to deal with arbitrary nilpotent groups.
\section{Images of multi-variable homomorphisms}\label{sec:img.of.multi.hom}
The last step of the proof of Proposition \ref{prop:prod.of.approx.grps.tor.free} required us to deal with the set $X_0$ resulting from Proposition \ref{prop:syros}. The key observation that allowed us to do so was Lemma \ref{lem:comms.are.homs}, which states that an arbitrary commutator form of weight $s$ is a homomorphism in each variable when applied to elements in an $s$-step nilpotent group. In this section we study such `multi-variable' homomorphisms in more detail. The main result is the following variant of Lemma \ref{lem:univ.subgroup}. Here, and throughout this section, abelian groups will be written multiplicatively for clarity.
\begin{prop}\label{prop:img.of.multi.hom}Let $k>0$ be an integer, let $\Gamma_1,\ldots,\Gamma_k$ be finite nilpotent groups and let $\Gamma$ be an abelian group of rank at most $r$. Let $\varphi:\Gamma_1\times\cdots\times\Gamma_k\to\Gamma$ be a map that is a homomorphism in each variable. Then
\[
\langle\varphi(\Gamma_1,\ldots,\Gamma_k)\rangle\subset\varphi(\Gamma_1,\ldots,\Gamma_k)^r.
\]
\end{prop}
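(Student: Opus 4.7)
The plan is to reduce to the Sylow $p$-subgroup level and then invoke Lemma \ref{lem:univ.subgroup}. Since each $\Gamma_i$ is a finite nilpotent group, it decomposes as the direct product of its Sylow subgroups $\Gamma_i^{(p)}$, so every $x_i\in\Gamma_i$ factors uniquely as $x_i = \prod_p x_i^{(p)}$. Multilinearity of $\varphi$ then expresses $\varphi(x_1, \ldots, x_k)$ as a product, over choices of primes $p_1, \ldots, p_k$, of terms of the form $\varphi(x_1^{(p_1)}, \ldots, x_k^{(p_k)})$.

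The crucial step is to observe that only the diagonal terms with $p_1 = \cdots = p_k$ can be nontrivial. Indeed, since $\varphi$ is a homomorphism in each variable, $\varphi(\ldots, 1_{\Gamma_i}, \ldots) = 1_\Gamma$, so the order of $\varphi(x_1, \ldots, x_k)$ divides each $|x_i|$ and hence $\gcd(|x_1|, \ldots, |x_k|)$; if the $x_i$ come from Sylow subgroups for distinct primes, this gcd equals $1$. It follows that $\varphi(x_1, \ldots, x_k) = \prod_p \varphi(x_1^{(p)}, \ldots, x_k^{(p)})$, and hence that $\varphi(\Gamma_1, \ldots, \Gamma_k) = \prod_p F^{(p)}$, where $F^{(p)} := \varphi(\Gamma_1^{(p)}, \ldots, \Gamma_k^{(p)})$.

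For each prime $p$, fixing all but one variable shows that $F^{(p)}$ is a union of $p$-subgroups of $\Gamma$ and therefore lies inside the $p$-torsion of $\Gamma$. The generated subgroup $\langle F^{(p)} \rangle$ is consequently a finite abelian $p$-group, which by Lemma \ref{lem:ab.subgroup.rank} has rank at most $r$, so Lemma \ref{lem:univ.subgroup} yields $\langle F^{(p)} \rangle \subset (F^{(p)})^r$. Because elements of different prime-power orders generate independent subgroups of the abelian group $\Gamma$, we conclude that $\langle \varphi(\Gamma_1, \ldots, \Gamma_k) \rangle = \prod_p \langle F^{(p)} \rangle \subset \prod_p (F^{(p)})^r$. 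Reordering products using commutativity gives $\prod_p (F^{(p)})^r = \bigl(\prod_p F^{(p)}\bigr)^r = \varphi(\Gamma_1, \ldots, \Gamma_k)^r$, which completes the proof.

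The main obstacle is the ``mixed-prime vanishing'' step: it is precisely the separate multilinearity of $\varphi$, combined with the finite nilpotent structure of each $\Gamma_i$, that allows the factorisation by primes. Once this is established, what remains is routine bookkeeping, the only delicate point being that Lemma \ref{lem:univ.subgroup} genuinely requires a $p$-group, so the Sylow decomposition is essential rather than a technical convenience.
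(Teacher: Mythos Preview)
Your proof is correct and follows essentially the same route as the paper: decompose each $\Gamma_i$ into Sylow $p$-subgroups, use multilinearity to show mixed-prime terms vanish (this is the paper's Lemma~\ref{lem:multi.hom.factors}), then apply Lemma~\ref{lem:univ.subgroup} prime by prime and recombine. The only slight imprecision is your invocation of Lemma~\ref{lem:ab.subgroup.rank}: as stated that lemma requires the ambient group to be an abelian $p$-group, so you should first pass to the $p$-primary summand $\Gamma(p)$ of $\Gamma$ (which, as a quotient of $\Gamma$, has rank at most $r$) and then apply the lemma there --- exactly what the paper does via its projections $\pi_p$.
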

We start with the following observation, which is essentially identical to an observation made in the previous section.
\begin{lemma}\label{lem:img.of.multi.hom}
Proposition \ref{prop:img.of.multi.hom} holds when $\Gamma$ is a $p$-group.
\end{lemma}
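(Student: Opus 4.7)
The plan is to reduce the statement to a direct application of Lemma \ref{lem:univ.subgroup}. The key observation is that the image set $X:=\varphi(\Gamma_1,\ldots,\Gamma_k)$ is already a union of subgroups of $\Gamma$. Indeed, since $\varphi$ is a homomorphism in each variable, fixing any choice of $g_2\in\Gamma_2,\ldots,g_k\in\Gamma_k$ gives a homomorphism
\[
\Gamma_1\to\Gamma,\qquad g_1\mapsto\varphi(g_1,g_2,\ldots,g_k),
\]
whose image $\varphi(\Gamma_1,g_2,\ldots,g_k)$ is a subgroup of $\Gamma$. Therefore
\[
X=\bigcup_{(g_2,\ldots,g_k)\in\Gamma_2\times\cdots\times\Gamma_k}\varphi(\Gamma_1,g_2,\ldots,g_k)
\]
is a union of subgroups of $\Gamma$ (and in particular is symmetric and contains the identity, so that the expression $X^r$ is unambiguous).

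With that observation in place, the conclusion is immediate: $\Gamma$ is an abelian $p$-group of rank at most $r$, and $X\subset\Gamma$ is a union of subgroups, so Lemma \ref{lem:univ.subgroup} yields $\langle X\rangle\subset X^r$, which is exactly the claim.

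There is no real obstacle at this stage of the argument; the proof is essentially one line once one spots that $X$ is a union of subgroups. It is worth noting that the hypotheses that the $\Gamma_i$ are finite and nilpotent play no role in this special case. They will, however, be needed in order to upgrade this lemma to the full Proposition \ref{prop:img.of.multi.hom}, where $\Gamma$ is allowed to be an arbitrary abelian group of rank at most $r$: there one presumably decomposes each $\Gamma_i$ as a direct product of its Sylow $p$-subgroups (using finiteness and nilpotency) and analyses the restriction of $\varphi$ to each $p$-component separately, before combining the resulting $p$-local statements into the global one.
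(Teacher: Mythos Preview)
Your proof is correct and matches the paper's approach exactly: the paper also observes that $\varphi(\Gamma_1,\ldots,\Gamma_k)$ is the union of the subgroups $\varphi(\Gamma_1,x_2,\ldots,x_k)$ and then invokes Lemma~\ref{lem:univ.subgroup}. Your remarks about the unused hypotheses and the strategy for the full Proposition~\ref{prop:img.of.multi.hom} are also accurate.
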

\begin{proof}This follows immediately from Lemma \ref{lem:univ.subgroup} and the observation that $\varphi(\Gamma_1,\ldots,\Gamma_k)$ can be expressed as a union of the subgroups $\varphi(\Gamma_1,x_2,\ldots,x_k)$ with $x_i\in\Gamma_i$.
\end{proof}
A key fact that will enable us to proceed from Lemma \ref{lem:img.of.multi.hom} to Proposition \ref{prop:img.of.multi.hom} is the following alternative characterisation of nilpotency in a finite group.
\begin{prop}[{\cite[Theorem 10.3.4]{hall}}]\label{prop:prod.of.p-grps}
A finite group is nilpotent if and only if it is a direct product of $p$-groups.
\end{prop}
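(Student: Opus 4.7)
The plan is to prove both directions of the equivalence separately, with the forward direction being the more substantive one.

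For the easy direction, I would first show that a direct product of finitely many $p$-groups (for possibly different primes) is nilpotent. The argument splits in two: (a) any finite $p$-group is nilpotent, which follows from the classical fact that a nontrivial $p$-group has nontrivial centre (a counting argument on conjugacy classes modulo $p$), so the upper central series of a $p$-group reaches the whole group; and (b) a direct product of nilpotent groups is nilpotent, which I would establish by checking that $\Gamma_i(G_1 \times G_2) \subset \Gamma_i(G_1) \times \Gamma_i(G_2)$ for all $i$, since a commutator of pairs is the pair of commutators. Combining (a) and (b) handles one implication.

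For the forward direction, suppose $G$ is a finite nilpotent group. I would prove that every Sylow subgroup of $G$ is normal and then assemble $G$ as the internal direct product of its Sylow subgroups. The principal tool is the \emph{normaliser condition} for nilpotent groups: whenever $H$ is a proper subgroup of a nilpotent group $G$, one has $H \subsetneq N_G(H)$. This I would prove by descending along the lower central series: take the largest $i$ with $\Gamma_i(G) \not\subset H$, and observe that $[\Gamma_i(G), H] \subset \Gamma_{i+1}(G) \subset H$, so $\Gamma_i(G) \subset N_G(H)$ but $\Gamma_i(G) \not\subset H$. Combined with the standard Sylow-theoretic fact that $N_G(N_G(P)) = N_G(P)$ for any Sylow $p$-subgroup $P$ (because any $p$-subgroup normalising a Sylow subgroup lies inside it), the normaliser condition forces $N_G(P) = G$, so $P \lhd G$.

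With normality of each Sylow subgroup $P_1, \ldots, P_k$ (one per prime dividing $|G|$) in hand, I would finish by the standard internal-direct-product argument: the $P_i$ have pairwise coprime orders so intersect trivially, and more generally $P_1 \cdots P_{j-1} \cap P_j = 1$ by Lagrange; normality then gives $[P_i, P_j] \subset P_i \cap P_j = 1$ for $i \neq j$, so $P_1 \cdots P_k$ is an internal direct product; finally $|P_1 \cdots P_k| = \prod |P_i| = |G|$ by order-counting, so $G \cong P_1 \times \cdots \times P_k$.

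The main obstacle, if any, is the normaliser condition together with the $N_G(N_G(P)) = N_G(P)$ identity; these are the conceptual inputs, but both have short classical proofs, so the real work in the proposition is essentially bookkeeping. Since the result is attributed to Hall's book in the excerpt, I might prefer simply to cite it rather than reproduce the argument in full, but the above outline gives a self-contained route if desired.
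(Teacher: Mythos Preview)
Your proposal is correct: this is the standard textbook proof of the characterisation of finite nilpotent groups via their Sylow subgroups, and all the steps you outline (nontrivial centre of a $p$-group, the normaliser condition via the lower central series, the identity $N_G(N_G(P))=N_G(P)$, and the internal direct product assembly) are sound.

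The paper, however, gives no proof at all: the proposition is simply quoted as \cite[Theorem 10.3.4]{hall} and used as a black box. So your approach differs from the paper's only in that you supply a self-contained argument where the paper is content to cite. As you yourself note in your final paragraph, citing Hall is entirely adequate here; your outline is a perfectly good substitute if one prefers the exposition to be self-contained, at the cost of a page or so of classical material that is tangential to the paper's main contribution.
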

The proof of Proposition \ref{prop:img.of.multi.hom} then rests on the following observation.
\begin{lemma}\label{lem:multi.hom.factors}
For $i=1,\ldots,k$ let $\Gamma_i=\prod_p\Gamma_i(p)$ be a direct product of $p$-groups $\Gamma_i(p)$, and let $\Gamma=\Gamma(0)\otimes\prod_p\Gamma(p)$ be a direct product of a torsion-free abelian group $\Gamma(0)$ and abelian $p$-groups $\Gamma(p)$. Let $\varphi:\Gamma_1\times\cdots\times\Gamma_k\to\Gamma$ be a map that is a homomorphism in each variable. Then whenever $u_i(p)\in\Gamma_i(p)$ we have
\begin{equation}\label{eq:img.of.p.in.p}
\varphi(u_1(p),\ldots,u_k(p))\in\Gamma(p)
\end{equation}
and
\begin{equation}\label{eq:img.of.multi.hom}
\varphi(\textstyle\prod_pu_1(p),\ldots,\textstyle\prod_pu_k(p))=\prod_p\varphi(u_1(p),\ldots,u_k(p)).
\end{equation}
\end{lemma}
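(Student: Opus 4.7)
The plan is to deduce both statements from the observation that $\varphi$, being a homomorphism in each variable, converts $n$th powers in any single slot into $n$th powers of the image. In particular, if $u_i \in \Gamma_i$ has order dividing $N_i$ then $\varphi(u_1,\ldots,u_k)$ has order dividing $N_i$ for each $i$, hence order dividing $\gcd(N_1,\ldots,N_k)$.

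For (\ref{eq:img.of.p.in.p}), the key point is that each $u_i(p) \in \Gamma_i(p)$ has order a power of $p$, say $p^{n_i}$. Applying the remark above with $N_i = p^{n_i}$, the element $g := \varphi(u_1(p),\ldots,u_k(p))$ has order a power of $p$. Writing $g = g_0 \cdot \prod_q g_q$ according to the decomposition $\Gamma = \Gamma(0) \times \prod_q \Gamma(q)$, the component $g_0$ is a torsion element of the torsion-free group $\Gamma(0)$ and so is trivial, while each $g_q$ for $q \ne p$ simultaneously has $q$-power and $p$-power order and so is trivial. Hence $g \in \Gamma(p)$, giving (\ref{eq:img.of.p.in.p}).

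For (\ref{eq:img.of.multi.hom}), I would expand the left-hand side using multilinearity of $\varphi$ in each variable (noting that almost all $u_i(p)$ are trivial, so the products are finite) to obtain
\[
\varphi\bigl(\textstyle\prod_{p_1}u_1(p_1),\ldots,\prod_{p_k}u_k(p_k)\bigr) = \prod_{p_1,\ldots,p_k}\varphi(u_1(p_1),\ldots,u_k(p_k)).
\]
The claim then reduces to showing that any cross term with primes not all equal is trivial. Suppose $p_i \ne p_j$ for some $i, j$; letting $p_i^{n_i}$ and $p_j^{n_j}$ be the orders of $u_i(p_i)$ and $u_j(p_j)$, the initial observation shows that $\varphi(u_1(p_1),\ldots,u_k(p_k))$ has order dividing $\gcd(p_i^{n_i}, p_j^{n_j}) = 1$, so is trivial. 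This leaves only the diagonal terms $p_1 = \cdots = p_k = p$, which sum (over $p$) to the right-hand side of (\ref{eq:img.of.multi.hom}), using (\ref{eq:img.of.p.in.p}) to justify that the diagonal contributions in different primes $p$ sit in independent factors $\Gamma(p)$ and hence combine as a direct product.

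No serious obstacle is anticipated — the proof is a clean application of multilinearity together with the fact that only the trivial element has coprime orders in two abelian $p$-groups. The only mild care required is the coordinate-wise analysis in $\Gamma = \Gamma(0) \times \prod_p \Gamma(p)$ to convert the order bound into the containment in $\Gamma(p)$.
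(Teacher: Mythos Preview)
Your proposal is correct and follows essentially the same route as the paper: both arguments hinge on the observation that, since $\varphi$ is a homomorphism in each variable, the order of $\varphi(u_1(p_1),\ldots,u_k(p_k))$ divides the order of every $u_i(p_i)$, from which (\ref{eq:img.of.p.in.p}) follows via the $p$-power order, and (\ref{eq:img.of.multi.hom}) follows by expanding multilinearly and killing the mixed-prime cross terms. Your write-up is slightly more explicit about the coordinate decomposition in $\Gamma$, but the substance is identical.
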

\begin{proof}
\begin{sloppypar}Since $\varphi$ is a homomorphism in each variable, the order of $\varphi(u_1(p),\ldots,u_k(p))$ divides the order of each $u_i(p)$. One consequence of this is that the order of $\varphi(u_1(p),\ldots,u_k(p))$ is a power of $p$, and so (\ref{eq:img.of.p.in.p}) holds. Another consequence is that if $i<j$ and $p\ne q$ then
\[
\varphi(*,\ldots,*,u_i(p),*,\ldots,*,u_j(q),*,\ldots,*)=1,
\]
regardless of the starred entries, and so expanding $\varphi(\textstyle\prod_pu_1(p),\ldots,\textstyle\prod_pu_k(p))$ in each variable gives (\ref{eq:img.of.multi.hom}).
\end{sloppypar}
\end{proof}
\begin{proof}[Proof of Proposition \ref{prop:img.of.multi.hom}]
Proposition \ref{prop:prod.of.p-grps} implies that we can express each $\Gamma_i$ as direct products of $p$-groups, as in Lemma \ref{lem:multi.hom.factors}. Furthermore, the structure theorem for finitely generated abelian groups shows that we decompose $\Gamma$ as a direct product $\Gamma(0)\otimes\prod_p\Gamma(p)$ of a torsion-free abelian group $\Gamma(0)$ and abelian $p$-groups $\Gamma(p)$, also as in Lemma \ref{lem:multi.hom.factors}. Write $\pi_p$ for the canonical projection of $\Gamma$ onto $\Gamma(p)$, and note that if $X$ is a generating set for $\Gamma$ then $\pi_p(X)$ is a generating set for $\Gamma(p)$. In particular, this gives
\begin{equation}\label{eq:rk.gamma.p}
\rank\Gamma(p)\le r.
\end{equation}
Furthermore, $\pi_p\circ\varphi$ is a map from $\Gamma_1\times\cdots\times\Gamma_k$ to $\Gamma(p)$ that is a homomorphism in each variable, and so Lemma \ref{lem:img.of.multi.hom} combines with (\ref{eq:rk.gamma.p}) to give
\[
\langle\pi_p(\varphi(\Gamma_1,\ldots,\Gamma_k))\rangle\subset\pi_p(\varphi(\Gamma_1,\ldots,\Gamma_k))^r.
\]
Combined with Lemma \ref{lem:multi.hom.factors} this implies that
\begin{equation}\label{eq:true.in.factor}
\langle\varphi(\Gamma_1(p),\ldots,\Gamma_k(p))\rangle\subset\varphi(\Gamma_1(p),\ldots,\Gamma_k(p))^r.
\end{equation}
Repeated application of Lemma \ref{lem:multi.hom.factors}, combined with an application of (\ref{eq:true.in.factor}), therefore implies that
\begin{align*}
\langle\varphi(\Gamma_1,\ldots,\Gamma_k)\rangle&=\left\langle\varphi\left(\textstyle\prod_p\Gamma_1(p),\ldots,\textstyle\prod_p\Gamma_k(p)\right)\right\rangle\\
                                                &=\left\langle\textstyle\prod_p\varphi(\Gamma_1(p),\ldots,\Gamma_k(p))\right\rangle\\
                                                                         %&\quad\text{by Lemma \ref{lem:multi.hom.factors}}\\
                                                &=\textstyle\prod_p\langle\varphi(\Gamma_1(p),\ldots,\Gamma_k(p))\rangle\\
                                                &\subset\textstyle\prod_p(\varphi(\Gamma_1(p),\ldots,\Gamma_k(p))^r)\\
                                                                         %&\quad\text{by (\ref{eq:true.in.factor})}\\
                                                &=\left(\textstyle\prod_p\varphi(\Gamma_1(p),\ldots,\Gamma_k(p))\right)^r\\
                                                &=\varphi\left(\textstyle\prod_p\Gamma_1(p),\ldots,\textstyle\prod_p\Gamma_k(p)\right)^r\\
                                                                         %&\quad\text{by Lemma \ref{lem:multi.hom.factors}}\\
                                                &=\varphi(\Gamma_1,\ldots,\Gamma_k)^r,
\end{align*}
as desired.
\end{proof}
\section{A decomposition into lower-step approximate groups}\label{sec:p.group.proof}
Just as in the proof of Proposition \ref{prop:tor.free.inside.prod.of.lower.step}, an important ingredient in the proof of Proposition~\ref{prop:p.grp.inside.prod.of.lower.step} is Proposition \ref{prop:syros}. However, in the more general setting of Proposition~\ref{prop:p.grp.inside.prod.of.lower.step}, in order to deal with the set $X_0$ arising from Proposition \ref{prop:syros} we need the following lemma.
\begin{prop}\label{prop:lower.step.in.quotient}
Let $G$ be an $s$-step nilpotent group generated by a $K$-approximate group $A$. Let $\tilde s\ge2$, let $\tilde G$ be an $\tilde s$-step nilpotent subgroup of $G$, and write $\tilde\pi:\tilde G\to\tilde G/\Gamma_2(\tilde G)$ for the canonical homomorphism. Suppose that $\tilde H$ is a finite subgroup of $\tilde G/\Gamma_2(\tilde G)$ and that $X_0\subset A^{\tilde m}\cap\tilde G$ is a set satisfying $\tilde\pi(X_0)=\tilde H$. Then there is a normal subgroup $N$ of $G$ such that
\[
[X_0,\ldots,X_0]_{\tilde s}\subset N\subset A^{K^{O_{\tilde m,s}(1)}}.
\]
\end{prop}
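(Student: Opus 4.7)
The plan is to follow the strategy sketched in Section \ref{sec:strategy} for the 2-step $p$-group case, replacing Lemma \ref{lem:univ.subgroup} with Proposition \ref{prop:img.of.multi.hom} and handling separately the obstacle that the resulting subgroup need not be $G$-normal. First, Lemma \ref{lem:comms.are.homs} asserts that the map $(x_1,\ldots,x_{\tilde s}) \mapsto [x_1,\ldots,x_{\tilde s}]$ is a homomorphism in each variable on $\tilde G^{\tilde s}$ and vanishes whenever an entry lies in $\Gamma_2(\tilde G)$; Lemma \ref{lem:induced.multi.hom} then produces a multi-variable homomorphism $\psi\colon (\tilde G/\Gamma_2(\tilde G))^{\tilde s} \to \Gamma_{\tilde s}(\tilde G)$ satisfying $\psi(\tilde H,\ldots,\tilde H) = [X_0,\ldots,X_0]_{\tilde s}$. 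Since a simple commutator of weight $\tilde s$ in elements of $A^{\tilde m}$ has $A$-word length $O_{\tilde m,\tilde s}(1)$, the right-hand side sits in $A^{C} \cap \Gamma_{\tilde s}(\tilde G)$ for some $C=O_{\tilde m,\tilde s}(1)$; by Lemma \ref{lem:approx.grp.in.[g,g]} this is a symmetric subset of the abelian group $\Gamma_{\tilde s}(\tilde G)$ with doubling at most $K^{O_{\tilde m,\tilde s}(1)}$.

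Next I would apply Theorem \ref{thm:ab.freiman} to obtain a subgroup $U$ of $\Gamma_{\tilde s}(\tilde G)$ with $U \subset A^{O_{\tilde m,\tilde s}(1)}$ and a progression $Q$ of rank $r \le K^{O_{\tilde m,\tilde s}(1)}$ such that $[X_0,\ldots,X_0]_{\tilde s} \subset U + Q$. In the abelian quotient of $\Gamma_{\tilde s}(\tilde G)$ by $U$, the image of $[X_0,\ldots,X_0]_{\tilde s}$ is covered by the image of $Q$ and so generates an abelian group of rank at most $r$; applying Proposition \ref{prop:img.of.multi.hom} to the induced multi-variable homomorphism then gives $\langle [X_0,\ldots,X_0]_{\tilde s}\rangle \cdot U \subset [X_0,\ldots,X_0]_{\tilde s}^r \cdot U \subset A^{K^{O_{\tilde m,\tilde s}(1)}}$. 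This already yields the proposition in the base case $s=\tilde s$: there $\Gamma_{\tilde s}(G)$ is central in $G$, so $U$ is $G$-normal and one may take $N := \langle [X_0,\ldots,X_0]_{\tilde s}\rangle\cdot U$.

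The main obstacle, and the focus of the remaining argument, is the general case $s > \tilde s$, in which $\Gamma_{\tilde s}(G)$ need not be central in $G$ and hence $U$ need not be $G$-normal. My plan is to induct on $s-\tilde s$ and, at the inductive step, let $N$ be the normal closure in $G$ of $\langle [X_0,\ldots,X_0]_{\tilde s}\rangle\cdot U$, bounding it by commutator calculus. The identity $gcg^{-1} = c\,[c,g^{-1}]$, combined with $\langle [X_0,\ldots,X_0]_{\tilde s}\rangle\cdot U \subset \Gamma_{\tilde s}(G)$, exhibits $N$ as the subgroup generated by the finitely many sets $[\langle [X_0,\ldots,X_0]_{\tilde s}\rangle\cdot U,\underbrace{G,\ldots,G}_{j}]$ for $0\le j\le s-\tilde s$, each lying in $\Gamma_{\tilde s+j}(G)$ and vanishing for $j>s-\tilde s$. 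Modulo $\Gamma_{\tilde s+j+1}(G)$ each such set is again the image of a multi-variable homomorphism (by the multilinearity of commutators in the relevant layer), to which the same Freiman--Ruzsa plus Proposition \ref{prop:img.of.multi.hom} machinery---now applied to the slice $A^{K^{O(1)}}\cap \Gamma_{\tilde s+j}(G)$, whose doubling is controlled by Lemma \ref{lem:approx.grp.in.[g,g]}---can be reapplied recursively.

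The principal technical difficulty is the book-keeping required to keep all intermediate $A$-word lengths polynomial in $K$ throughout this recursion. Although multilinearity reduces each commutator $[c,g_1,\ldots,g_j]$ modulo $\Gamma_{\tilde s+j+1}(G)$ to an expression in the images $\bar g_i\in G^{\mathrm{ab}}$, the ambient abelian quotients need not themselves have bounded rank, so one has to argue that it suffices to take the $g_i$ in a controlled generating subset of $G$---for instance, lifts of generators of the bounded-rank progression part of the Green--Ruzsa decomposition of $\bar A$, together with lifts of the finite subgroup part analysed once more via Freiman--Ruzsa applied to an appropriate slice. This is also where the dependence on $s$ (rather than merely on $\tilde s$) in the final exponent $O_{\tilde m,s}(1)$ arises, since the recursion may unfold through $s-\tilde s$ iterated commutator layers before the chain terminates at $\Gamma_{s+1}(G) = \{1\}$.
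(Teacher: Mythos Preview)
Your approach is correct and essentially matches the paper's. The paper reduces Proposition~\ref{prop:lower.step.in.quotient} to the more general Proposition~\ref{prop:grp.in.normal'} (taking $\alpha$ to be the induced commutator map $\psi$ you construct), whose proof follows precisely your outline---Green--Ruzsa on a central slice, Proposition~\ref{prop:img.of.multi.hom}, and induction through commutator layers---with the minor organisational difference that the paper inducts top-down on the maximal weight $n$ for which some commutator form $\beta(\alpha(\tilde H),G,\ldots,G)$ is nontrivial (so that the relevant slice is always central in the current quotient and normality comes for free at each step), rather than first bounding $\langle[X_0,\ldots,X_0]_{\tilde s}\rangle$ and then separately controlling its normal closure as you propose.
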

The main aim of this section is to prove Proposition \ref{prop:lower.step.in.quotient}. However, before we do so, let us see how it implies Proposition~\ref{prop:p.grp.inside.prod.of.lower.step}.
\begin{proof}[Proof of Proposition~\ref{prop:p.grp.inside.prod.of.lower.step}]
Combining Proposition \ref{prop:syros} and Proposition \ref{prop:lower.step.in.quotient} gives an integer $r\le\tilde K^{O(1)}$, a normal subgroup $N$ inside $A^{K^{O_{m,s}(1)}}$, and $\tilde K^{O(1)}$-approximate groups $X_0,\ldots,X_r\subset\tilde A^{O(1)}$ satisfying $[X_i,\ldots,X_i]_{\tilde s}\subset N$ for all $i$, and such that
\[
|X_0\cdots X_r|\ge\exp(-\tilde K^{O(1)})|\tilde A|.
\]
The result then follows from applying Chang's covering argument (Proposition \ref{prop:chang}) in exactly the same way as in the proof of Proposition \ref{prop:tor.free.inside.prod.of.lower.step}.
\end{proof}
Using the notation of Proposition \ref{prop:lower.step.in.quotient}, it follows from Lemmas \ref{lem:comms.are.homs} and \ref{lem:induced.multi.hom} that there is a function
\[
\alpha:\tilde G/\Gamma_2(\tilde G)\times\cdots\times\tilde G/\Gamma_2(\tilde G)\to G
\]
that is a homomorphism in each variable and such that
\[
[\tilde g_1,\ldots,\tilde g_{\tilde s}]=\alpha(\tilde\pi(\tilde g_1),\ldots,\tilde\pi(\tilde g_{\tilde s})).
\]
In particular,
\[
[X_0,\ldots,X_0]_{\tilde s}=\alpha(\tilde H,\ldots,\tilde H),
\]
from which it also follows that
\[
\alpha(\tilde H,\ldots,\tilde H)\subset A^{O_{\tilde s}(\tilde m)}.
\]
Proposition \ref{prop:lower.step.in.quotient} therefore follows from the following result.
\begin{prop}\label{prop:grp.in.normal'}
Let $G$ be an $s$-step nilpotent group generated by a $K$-approximate group $A$. Let $\tilde H_1,\ldots,\tilde H_{\tilde s}$ be finite nilpotent groups and suppose that
\[
\alpha:\tilde H_1\times\cdots\times\tilde H_{\tilde s}\to G
\]
is a homomorphism in each variable with the property that
\begin{equation}\label{eq:H.in.A}
\alpha(\tilde H_1,\ldots,\tilde H_{\tilde s})\subset A^M.
\end{equation}
Then there exists a normal subgroup $N$ of $G$ with the property that
\[
\alpha(\tilde H_1,\ldots,\tilde H_{\tilde s})\subset N\subset A^{K^{O_{M,s}(1)}}.
\]
\end{prop}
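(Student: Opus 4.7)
The plan is to induct on the nilpotency class $s$ of $G$, mirroring the 2-step $p$-group sketch given in Section~\ref{sec:strategy}. For the base case $s=1$, $G$ is abelian; apply Theorem~\ref{thm:ab.freiman} to $A^M$ (an approximate group of doubling $K^{O_M(1)}$) to obtain a subgroup $U \subset A^{4M}$ and a progression $P$ of rank at most $K^{O_M(1)}$ with $A^M \subset U + P$. In the quotient $G/U$ the image $\rho_U(I)$ lies in the rank-bounded progression $\rho_U(P)$, so Proposition~\ref{prop:img.of.multi.hom} applied to $\rho_U \circ \alpha$ gives $\langle \rho_U(I) \rangle \subset \rho_U(I)^{K^{O_M(1)}}$, lifting to $\langle I \rangle \subset I^{K^{O_M(1)}}U \subset A^{K^{O_M(1)}}$. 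Take $N := \langle I \rangle$, automatically normal in abelian $G$.

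For the inductive step $s \ge 2$, let $Z := \Gamma_s(G)$, central in $G$, and write $\pi\colon G \to G/Z$. Apply the inductive hypothesis to $\pi\circ\alpha$ in the $(s-1)$-step group $G/Z$ (generated by the approximate group $\pi(A)$), producing a normal subgroup $\bar N$ of $G/Z$ with $\pi(I) \subset \bar N \subset \pi(A)^R$ for $R = K^{O_{M,s-1}(1)}$. Let $N$ be the normal closure of $I$ in $G$; since $\pi(N) \subset \bar N$, we have $N \subset A^R \cdot Z$, and the problem reduces to bounding $N \cap Z$, a subgroup of the abelian central group $Z$. A standard commutator-collection argument shows $N = \langle W \rangle$ where $W := \bigcup_{t=0}^{s-1} W_t$ and $W_t := \{[i,a_1,\ldots,a_t] : i \in I,\ a_j \in A\} \subset \Gamma_{t+1}(G) \cap A^{O_{M,s}(1)}$ (closure of $\langle W\rangle$ under $A$-conjugation follows from $a^{-1}wa = w[w,a]$ and $[W_t,A] \subset W_{t+1}$, with $W_s$ trivial); consequently $N \cap Z$ is generated by $W_{s-1}$ together with iterated commutators of total weight $s$ in elements of $W$.

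By Lemma~\ref{lem:comms.are.homs} and Lemma~\ref{lem:induced.multi.hom}, every commutator form of weight $s$ factors as a multi-hom $(G^{\text{ab}})^s \to Z$; precomposing with $\alpha$ in the appropriate position exhibits $N \cap Z$ as the union of images of a bounded collection of multi-variable homomorphisms whose domains combine the finite nilpotent $\tilde H_i$ with copies of $G^{\text{ab}}$. I then mimic the 2-step sketch twice. First, apply Theorem~\ref{thm:ab.freiman} to $Z \cap A^{O_{M,s}(1)}$ to obtain a subgroup $U_Z \subset Z \cap A^{O_{M,s}(1)}$, automatically normal in $G$, such that the image of $N \cap Z$ in $Z/U_Z$ lies in an abelian group of rank at most $K^{O_{M,s}(1)}$. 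Second, apply Theorem~\ref{thm:ab.freiman} to $\rho_1(A^{O_{M,s}(1)}) \subset G^{\text{ab}}$ to produce a finite subgroup $\bar V$ together with a bounded-rank progression; multilinearity of the commutator forms then lets me expand the progression's free directions and replace the copies of $G^{\text{ab}}$ in each multi-hom's domain by the finite $\bar V$, at the cost of $K^{O_{M,s}(1)}$ extra factors. Proposition~\ref{prop:img.of.multi.hom} applied in $Z/U_Z$ bounds each generated subgroup by a $K^{O_{M,s}(1)}$-th power of the image, yielding $N \cap Z \subset A^{K^{O_{M,s}(1)}} \cdot U_Z \subset A^{K^{O_{M,s}(1)}}$ and hence $N \subset A^{K^{O_{M,s}(1)}}$.

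The main obstacle is the combinatorial bookkeeping of the several commutator forms of weight $s$ contributing to $N \cap Z$: each must be identified as the image of a multi-variable homomorphism with finite nilpotent domain (via the multilinear expansion over the free directions of $G^{\text{ab}}$) and with abelian target of bounded rank (via the central quotient by $U_Z$), so that the polynomial-in-$K$ bound of Proposition~\ref{prop:img.of.multi.hom} applies uniformly without hidden dependence on $|G^{\text{ab}}|$ or $|Z|$.
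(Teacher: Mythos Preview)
Your toolkit (Theorem~\ref{thm:ab.freiman} on the central and abelianised pieces, Proposition~\ref{prop:img.of.multi.hom}) matches the paper's, but the induction on the nilpotency class $s$ does not close. The crucial gap is the line ``the problem reduces to bounding $N\cap Z$'' and the final ``hence $N\subset A^{K^{O_{M,s}(1)}}$''. From $\pi(N)\subset\bar N\subset\pi(A)^R$ you get $N\subset A^R\cdot Z$, and you bound $N\cap Z$; but for $n\in N$ the decomposition $n=az$ with $a\in A^R$, $z\in Z$ need not have $z\in N$, so $z$ is \emph{not} controlled by $N\cap Z$. Conversely, if you write $n=n_0z'$ with $z'\in N\cap Z$ (via a section $\pi(N)\to N$), there is no reason $n_0\in A^R$. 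So knowing $\pi(N)$ and $N\cap Z$ separately does not bound $N$. A secondary issue is your description of $N\cap Z$ as generated by $W_{s-1}$ together with weight-$s$ commutators in $W$: an element of $\langle W\rangle$ can land in $Z=\Gamma_s(G)$ through cancellation of lower-weight pieces that are not individually in $Z$, so this generating set is not justified.

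The paper avoids this by inducting not on $s$ but on the parameter
\[
n:=\max\{\,|\chi(\gamma)|:\gamma\ \text{a commutator form with}\ \gamma(\alpha(\tilde H_1,\ldots,\tilde H_{\tilde s}),G,\ldots,G)\ne\{1\}\,\}\le s.
\]
For each commutator form $\beta$ of weight exactly $n$ the image $\beta(\alpha(\cdot),G,\ldots,G)$ is central, and the paper uses precisely your two applications of Theorem~\ref{thm:ab.freiman} and Proposition~\ref{prop:img.of.multi.hom} to build a \emph{small} central normal subgroup $N_0\subset A^{K^{O_{M,n}(1)}}$ containing all of these images. In $G/N_0$ the parameter $n$ strictly drops (even though the step of $G/N_0$ may still be $s$), so the induction goes through and yields $N_1\lhd G/N_0$; pulling back gives the desired $N$. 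The point is that one never needs to identify or bound the full normal closure of $I$ at once, only to peel off the top layer of commutators and recurse. Your argument could be repaired by switching to this induction parameter; as written, the step-$s$ induction does not terminate without passing through all of $Z$.
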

\begin{proof}
Let $n$ be the smallest integer such that
\[
\gamma(\alpha(\tilde H_1,\ldots,\tilde H_{\tilde s}),G,\ldots,G)=\{1\}
\]
for every commutator form $\gamma$ of weight greater than $n$, noting that $n$ is finite (indeed, at most $s$) by the nilpotence of $G$. We prove by induction on $n$ that there exists a normal subgroup $N$ of $G$ with the property that
\[
\alpha(\tilde H_1,\ldots,\tilde H_{\tilde s})\subset N\subset A^{K^{O_{M,n,s}(1)}},
\]
noting that this is trivial when $n=0$, and that since $n\le s$ it is sufficient to imply the proposition.

For each commutator form $\beta$ of weight $n$, the definition of $n$ implies that
\begin{equation}\label{eq:beta.central}
\beta(\alpha(\tilde H_1,\ldots,\tilde H_{\tilde s}),G,\ldots,G)\subset Z(G).
\end{equation}
Moreover, since there are only finitely many commutator forms of weight $n$, (\ref{eq:H.in.A}) implies that there is some constant $C_{M,n}$ depending only on $M$ and $n$ such that every commutator $\beta$ of weight $n$ satisfies
\begin{equation}\label{eq:beta.alpha.bdd.post.ref}
\beta(\alpha(\tilde H_1,\ldots,\tilde H_{\tilde s}),A^4,\ldots,A^4)\subset A^{C_{M,n}}.
\end{equation}
Combined with (\ref{eq:beta.central}) this implies that every commutator $\beta$ of weight $n$ satisfies
\begin{equation}\label{eq:alpha.central.bdd}
\beta(\alpha(\tilde H_1,\ldots,\tilde H_{\tilde s}),A^4,\ldots,A^4)\subset A^{C_{M,n}}\cap Z(G).
\end{equation}
Lemma \ref{lem:approx.grp.in.[g,g]} implies that $A^{C_{M,n}}\cap Z(G)$ has doubling constant at most $K^{O_{M,n}(1)}$, and so Theorem \ref{thm:ab.freiman} implies that $A^{C_{M,n}}\cap Z(G)$ is contained inside an abelian coset progression $ZQ$, with $Z$ a subgroup satisfying
\begin{equation}\label{eq:centre.coset.prog'}
Z\subset A^{O_{M,n}(1)}
\end{equation}
and $Q$ a progression of rank at most $K^{O_{M,n}(1)}$. In light of (\ref{eq:alpha.central.bdd}), this implies in particular that that every commutator $\beta$ of weight $n$ satisfies
\begin{equation}\label{eq:alpha.coset.prog}
\beta(\alpha(\tilde H_1,\ldots,\tilde H_{\tilde s}),A^4,\ldots,A^4)\subset Z\cdot\langle Q\rangle.
\end{equation}
The fact that $Z$ is central implies in particular that it is normal in $G$, and so we may define the canonical projection $\rho:G\to G/Z$. It then follows from (\ref{eq:alpha.coset.prog}) that that every commutator $\beta$ of weight $n$ satisfies
\begin{equation}\label{eq:beta.alpha.in.bdd.rank}
\beta(\rho(\alpha(\tilde H_1,\ldots,\tilde H_{\tilde s})),\rho(A)^4,\ldots,\rho(A)^4)\subset\langle\rho(Q)\rangle,
\end{equation}
whilst the fact that the progression $Q$ has rank at most $K^{O_{M,n}(1)}$ implies that
\begin{equation}\label{eq:rank.Q.post.ref}
\rank\langle\rho(Q)\rangle\le K^{O_{M,n}(1)}.
\end{equation}
Write $\pi:G/Z\to(G/Z)/\Gamma_2(G/Z)$ for the canonical projection. The image $\pi(\rho(A))$ is a $K$-approximate group, and so Theorem \ref{thm:ab.freiman} implies that there exist an integer
\begin{equation}\label{eq:post.ref.k.bound}
k\le K^{O(1)},
\end{equation}
a subgroup
\begin{equation}\label{eq:post.ref.subgrp.bound}
H\subset\pi(\rho(A)^4),
\end{equation}
elements
\begin{equation}\label{eq:post.ref.elts.bound}
x_2,\ldots,x_k\in\rho(A)^4
\end{equation}
and a vector $L=(L_2,\ldots,L_k)$ of positive integers such that $\pi(\rho(A))$ is contained in a coset progression of the form
\[
H+P(\pi(x_2),\ldots,\pi(x_k);L).
\]
However, $(G/Z)/\Gamma_2(G/Z)$ is generated by $\pi(\rho(A))$, and so in particular we have
\begin{equation}\label{eq:quotient.gen.by.prog}
(G/Z)/\Gamma_2(G/Z)=\langle H\cup\{\pi(x_2),\ldots,\pi(x_k)\}\rangle.
\end{equation}
By (\ref{eq:post.ref.subgrp.bound}) we may fix a subset $Y_1\subset\rho(A)^4$ such that $H=\pi(Y_1)$. For $i=2,\ldots,k$, fix $Y_i:=\{x_i\}$, so that
\begin{equation}\label{eq:Y.in.A}
Y_i\subset\rho(A)^4
\end{equation}
for $i=1,\ldots,k$, and note that (\ref{eq:quotient.gen.by.prog}) implies that
\begin{equation}\label{eq:quotient.gen.by.prog.2}
(G/Z)/\Gamma_2(G/Z)=\langle\pi(Y_1)\cup\cdots\cup\pi(Y_k)\rangle.
\end{equation}
Now for each $\textbf{i}=(i_1,\ldots,i_{n-1})\in[k]^{n-1}$ and each commutator form $\beta$ of weight $n$ define a subgroup
\[
U_{\textbf{i}}(\beta):=\langle\beta(\rho(\alpha(\tilde H_1,\ldots,\tilde H_{\tilde s})),Y_{i_1},\ldots,Y_{i_{n-1}})\rangle.
\]
It follows from (\ref{eq:beta.central}) that $U_{\textbf{i}}(\beta)$ is central in $G/Z$, and so we may define a further central subgroup $U(\beta)$ of $G/Z$ by
\[
U(\beta):=\prod_{\textbf{i}\in[k]^{n-1}}U_{\textbf{i}}(\beta),
\]
and yet another central subgroup $U$ of $G/Z$ by
\[
U:=\prod_{\beta\,:\,|\chi(\beta)|=n}U(\beta).
\]
It follows from Proposition \ref{prop:com.components} and the definition of $n$ that for every commutator form $\beta$ of weight $n$ the map
\[
\begin{array}{ccccc}
\varphi_\beta&:&\tilde H_1\times\cdots\times\tilde H_{\tilde s}\times G/Z\times\cdots\times G/Z&\to& Z(G/Z)\\
&&(\tilde h_1,\ldots,\tilde h_{\tilde s},g_1,\ldots,g_{n-1})&\mapsto&\beta(\rho(\alpha(\tilde h_1,\ldots,\tilde h_{\tilde s})),g_1,\ldots,g_{n-1})
\end{array}
\]
is a homomorphism in each variable, and also that this $\varphi_\beta$ is trivial if there is some $g_i\in\Gamma_2(G/Z)$ (the fact that the range of $\varphi_\beta$ may be taken to be $Z(G/Z)$ follows from (\ref{eq:beta.central})). Lemma~\ref{lem:induced.multi.hom} therefore implies that there is a function
\[
\psi_\beta:\tilde H_1\times\cdots\times\tilde H_{\tilde s}\times(G/Z)/\Gamma_2(G/Z)\times\cdots\times(G/Z)/\Gamma_2(G/Z)\to Z(G/Z)
\]
that is a homomorphism in each variable and satisfies
\begin{equation}\label{eq:beta.to.psi}
\beta(\rho(\alpha(\tilde h_1,\ldots,\tilde h_{\tilde s})),g_1,\ldots,g_n)=\psi_\beta(\tilde h_1,\ldots,\tilde h_{\tilde s},\pi(g_1),\ldots,\pi(g_n)).
\end{equation}
Combined with (\ref{eq:quotient.gen.by.prog.2}), this implies that
\begin{equation}\label{eq:beta.in.prod.U}
\beta(\rho(\alpha(\tilde H_1,\ldots,\tilde H_{\tilde s})),G/Z,\ldots,G/Z)\subset\prod_{\textbf{i}\in[k]^{n-1}}\langle\psi_\beta(\tilde H_1,\ldots,\tilde H_{\tilde s},\pi(Y_{i_1}),\ldots,\pi(Y_{i_{n-1}}))\rangle.
\end{equation}
Now (\ref{eq:beta.to.psi}) also implies that
\begin{equation}\label{eq:beta.alpha.image}
\beta(\rho(\alpha(\tilde H_1,\ldots,\tilde H_{\tilde s})),Y_{i_1},\ldots,Y_{i_{n-1}})=\psi_\beta(\tilde H_1,\ldots,\tilde H_{\tilde s},\pi(Y_{i_1}),\ldots,\pi(Y_{i_{n-1}})),
\end{equation}
and so it follows from (\ref{eq:beta.in.prod.U}) that every commutator form $\beta$ of weight $n$ satisfies
\begin{equation}\label{eq:beta.rho.in.U}
\beta(\rho(\alpha(\tilde H_1,\ldots,\tilde H_{\tilde s})),G/Z,\ldots,G/Z)\subset U.
\end{equation}
Furthermore, (\ref{eq:beta.alpha.in.bdd.rank}), (\ref{eq:Y.in.A}) and (\ref{eq:beta.alpha.image}) combine to imply that
\[
\psi_\beta(\tilde H_1,\ldots,\tilde H_{\tilde s},\pi(Y_{i_1}),\ldots,\pi(Y_{i_{n-1}}))\subset\langle\rho(Q)\rangle,
\]
and so Proposition \ref{prop:img.of.multi.hom} and (\ref{eq:rank.Q.post.ref}) combine to give
\[
\langle\psi_\beta(\tilde H_1,\ldots,\tilde H_{\tilde s},\pi(Y_{i_1}),\ldots,\pi(Y_{i_{n-1}}))\rangle\subset\psi_\beta(\tilde H_1,\ldots,\tilde H_{\tilde s},\pi(Y_{i_1}),\ldots,\pi(Y_{i_{n-1}}))^{K^{O_{M,n}(1)}}.
\]
Combined with (\ref{eq:beta.alpha.bdd.post.ref}), (\ref{eq:Y.in.A}) and (\ref{eq:beta.alpha.image}), this gives
\[
\langle\psi_\beta(\tilde H_1,\ldots,\tilde H_{\tilde s},\pi(Y_{i_1}),\ldots,\pi(Y_{i_{n-1}}))\rangle\subset\rho(A)^{K^{O_{M,n}(1)}}.
\]
It therefore follows from (\ref{eq:beta.alpha.image}) that $U_{\textbf{i}}(\beta)\subset\rho(A)^{K^{O_{M,n}(1)}}$, and hence from (\ref{eq:post.ref.k.bound}) that $U(\beta)\subset\rho(A)^{K^{O_{M,n}(1)}}$. Since there are finitely many commutator forms of weight at most $n$, this implies that
\begin{equation}\label{eq:U.in.A}
U\subset\rho(A)^{K^{O_{M,n}(1)}}.
\end{equation}
Since $U$ is central in $G/Z$, the pullback $N_0:=\rho^{-1}(U)$ is a normal subgroup of $G$, and by (\ref{eq:beta.rho.in.U}) we have
\[
\beta(\alpha(\tilde H_1,\ldots,\tilde H_{\tilde s}),G,\ldots,G)\subset N_0
\]
for every commutator form $\beta$ of weight $n$. In particular, writing $\tau:G\to G/N_0$ for the canonical projection we have
\[
\beta(\tau(\alpha(\tilde H_1,\ldots,\tilde H_{\tilde s})),G/N_0,\ldots,G/N_0)=\{1\}
\]
for every commutator form $\beta$ of weight $n$. The inductive hypothesis therefore implies that there is a normal subgroup $N_1$ of $G/N_0$ satisfying
\begin{equation}\label{eq:N1.in.A}
N_1\subset\tau(A)^{K^{O_{M,n}(1)}}
\end{equation}
and such that $\tau(\alpha(\tilde H_1,\ldots,\tilde H_{\tilde s}))\subset N_1$. Setting $N:=\tau^{-1}(N_1)$, this implies that $\alpha(\tilde H_1,\ldots,\tilde H_{\tilde s})\subset N$. Furthermore, (\ref{eq:centre.coset.prog'}) and (\ref{eq:U.in.A}) imply that $N_0\subset A^{K^{O_{M,n}(1)}}$, and so (\ref{eq:N1.in.A}) implies that $N\subset A^{K^{O_{M,n}(1)}}$.
\end{proof}
We close this section by noting that even if we were to assume Proposition \ref{prop:p.grp.inside.prod.of.lower.step}, and hence reduce to the case in which the approximate group $A$ could be placed inside a bounded product $A_1\cdots A_r$ of approximate groups of lower step, there would still be a significant obstacle to deducing Theorem \ref{thm:freiman.p-group} inductively. This is because applying Theorem \ref{thm:freiman.p-group} to the approximate group $A_1$, say, would produce a subgroup $H$ that was normalised by $A_1$ but not necessarily normalised by the whole of $A$.

It is of significant interest, therefore, to be able to place arbitrary subgroups of $G$ efficiently inside \emph{normal} subgroups of $G$. We therefore take this opportunity to note that applying Proposition \ref{prop:grp.in.normal'} in the case $\tilde{s}=1$ with $\alpha$ the inclusion homomorphism $\alpha:H\hookrightarrow G$ gives the following result.
\begin{prop}\label{prop:grp.in.normal}
Let $G$ be an $s$-step nilpotent group generated by a $K$-approximate group $A$. Let $H\subset A^M$ be a subgroup of $G$. Then there exists a normal subgroup $N$ of $G$ inside $A^{K^{O_{M,s}(1)}}$ with the property that $H\subset N$.
\end{prop}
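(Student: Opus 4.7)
The plan is to obtain this statement as an immediate special case of Proposition \ref{prop:grp.in.normal'}, applied with $\tilde s = 1$, $\tilde H_1 = H$, and $\alpha \colon H \hookrightarrow G$ the inclusion homomorphism. The author essentially flags this derivation in the paragraph preceding the statement, so the task is mainly one of verifying that the hypotheses of Proposition \ref{prop:grp.in.normal'} translate correctly under this specialisation.

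First I would check that $H$ qualifies as a finite nilpotent group in the sense required by Proposition \ref{prop:grp.in.normal'}. Finiteness is immediate from the hypothesis $H \subset A^M$ together with Lemma \ref{lem:app.grp.fundamental}, which gives $|H| \le |A^M| \le K^{M-1}|A|$. Nilpotency (of step at most $s$) is inherited from the ambient group $G$. The inclusion map $\alpha$ is trivially a homomorphism in its single variable, and the containment condition (\ref{eq:H.in.A}) reads $\alpha(\tilde H_1) = H \subset A^M$, which is exactly the hypothesis of the present proposition.

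Applying Proposition \ref{prop:grp.in.normal'} then produces a normal subgroup $N$ of $G$ satisfying
\[
H = \alpha(H) \subset N \subset A^{K^{O_{M,s}(1)}},
\]
which is precisely the conclusion.

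There is no genuine obstacle at this stage; the entire substance of the argument has already been carried out in the proof of Proposition \ref{prop:grp.in.normal'}, where the induction on the nilpotency depth $n$ and the structural input from Proposition \ref{prop:img.of.multi.hom} do all of the work. The present proposition is just the degenerate case in which the multi-variable homomorphism has only one input, so no commutator-form machinery is activated and the statement drops out as a clean corollary.
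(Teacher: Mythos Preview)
The proposal is correct and matches the paper's approach exactly: the paper explicitly states (in the paragraph immediately preceding Proposition~\ref{prop:grp.in.normal}) that the result follows from applying Proposition~\ref{prop:grp.in.normal'} with $\tilde s = 1$ and $\alpha$ the inclusion $H \hookrightarrow G$. Your verification of the hypotheses is accurate and complete.
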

\section{Conclusion of the general case}\label{sec:conclusion}
In this section we present the final steps of the proof of Theorem \ref{thm:freiman.p-group}, beginning with the following.
\begin{prop}\label{prop:p.grp.inside.prod.of.abelian}
Let $m>0,s,\tilde s\le s$ be integers. Let $G$ be an $s$-step nilpotent group generated by a $K$-approximate group $A$, and let $\tilde A$ be an $\tilde s$-step $\tilde K$-approximate group inside $A^m$. Then there exist an integer $r\le\tilde K^{O_{\tilde s}(1)}$, a normal subgroup $N$ inside $A^{K^{O_{m,s,\tilde s}(1)}}$, and $\tilde K^{O_{\tilde s}(1)}$-approximate groups $A_1,\ldots,A_r\subset\tilde A^{O_{\tilde s}(1)}N$ such that $[A_i,A_i]\subset N$ for all $i$ and such that
\[
\tilde A\subset A_1\cdots A_r.
\]
\end{prop}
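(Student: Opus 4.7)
The plan is to prove Proposition \ref{prop:p.grp.inside.prod.of.abelian} by induction on $\tilde s$, with Proposition \ref{prop:p.grp.inside.prod.of.lower.step} providing the engine. The base case $\tilde s = 1$ is trivial: $\tilde A$ is then already abelian, so we may take $N = \{1\}$, $r = 1$ and $A_1 = \tilde A$.

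For the inductive step ($\tilde s \ge 2$) I would first apply Proposition \ref{prop:p.grp.inside.prod.of.lower.step} to produce $\tilde K^{O(1)}$-approximate groups $B_1,\ldots,B_r \subset \tilde A^{O(1)}$, with $r \le \tilde K^{O(1)}$, together with a normal subgroup $N_0 \lhd G$ inside $A^{K^{O_{m,s}(1)}}$ such that $[B_j,\ldots,B_j]_{\tilde s} \subset N_0$ for every $j$ and $\tilde A \subset B_1 \cdots B_r$. Writing $\rho : G \to G/N_0$ for the canonical projection, Lemma \ref{lem:nilp.gens} then tells me that $\rho(B_j)$ generates a subgroup of step at most $\tilde s - 1$ in $G/N_0$. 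Since $G/N_0$ is still $s$-step nilpotent and generated by the $K$-approximate group $\rho(A)$, and since $\rho(B_j)$ is a $\tilde K^{O(1)}$-approximate group (Remark \ref{rem:image.app.grp}) contained in $\rho(A)^{O_m(1)}$, I can apply the inductive hypothesis to each $\rho(B_j)$. This yields integers $r_j \le \tilde K^{O_{\tilde s}(1)}$, normal subgroups $\bar N_j \lhd G/N_0$ contained in $\rho(A)^{K^{O_{m,s,\tilde s}(1)}}$, and $\tilde K^{O_{\tilde s}(1)}$-approximate groups $\bar C_{j,1},\ldots,\bar C_{j,r_j} \subset \rho(B_j)^{O_{\tilde s}(1)} \bar N_j$ with $[\bar C_{j,i}, \bar C_{j,i}] \subset \bar N_j$ and $\rho(B_j) \subset \bar C_{j,1} \cdots \bar C_{j,r_j}$.

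To close the induction I would pull everything back. Set $N_j := \rho^{-1}(\bar N_j)$, which is normal in $G$ and, since $N_0 \subset A^{K^{O_{m,s}(1)}}$, sits inside $A^{K^{O_{m,s,\tilde s}(1)}}$; the product $N := N_1 \cdots N_r$ is then itself a normal subgroup of $G$ within the same bound, the count $r \le \tilde K^{O(1)}$ being harmless. Define $\hat C_{j,i} := \rho^{-1}(\bar C_{j,i})$; because $N_0$ is finite, Lemma \ref{lem:pullback} shows that $\hat C_{j,i}$ is a $\tilde K^{O_{\tilde s}(1)}$-approximate group. The inclusion $\bar C_{j,i} \subset \rho(B_j)^{O(1)} \bar N_j$ pulls back to $\hat C_{j,i} \subset B_j^{O_{\tilde s}(1)} N_j \subset \tilde A^{O_{\tilde s}(1)} N$; the relation $[\bar C_{j,i}, \bar C_{j,i}] \subset \bar N_j$ pulls back to $[\hat C_{j,i}, \hat C_{j,i}] \subset N$; and, since $\rho$ is surjective so that $\rho^{-1}$ distributes over products, $\rho(B_j) \subset \bar C_{j,1} \cdots \bar C_{j,r_j}$ pulls back to $B_j \subset \hat C_{j,1} \cdots \hat C_{j,r_j}$. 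Concatenating over $j$ puts $\tilde A$ inside a product of $\sum_j r_j \le \tilde K^{O_{\tilde s}(1)}$ of the $\hat C_{j,i}$, and relabeling gives the desired $A_1,\ldots,A_{r'}$.

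The main obstacle is really bookkeeping rather than new ideas: the geometric content is already packaged inside Proposition \ref{prop:p.grp.inside.prod.of.lower.step}, and the work consists of verifying that the bounds on $r$, on the radius of $A_i$ inside $\tilde A^{O(1)} N$, and on the containment $N \subset A^{K^{O(1)}}$ compose correctly under the $\tilde s$-fold iteration, while checking at each stage that passing to $G/N_0$ and then pulling back through $\rho$ preserves the approximate-group structure.
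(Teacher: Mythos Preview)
Your proposal is correct and follows essentially the same route as the paper: induct on $\tilde s$, invoke Proposition~\ref{prop:p.grp.inside.prod.of.lower.step} to drop the step by one modulo a normal subgroup $N_0\subset A^{K^{O_{m,s}(1)}}$, pass to $G/N_0$ (using Lemma~\ref{lem:nilp.gens} to see the images have lower step), apply the inductive hypothesis to each piece, and pull back via Lemma~\ref{lem:pullback}, taking $N$ to be the preimage of the product of the resulting normal subgroups. The paper's proof is organised identically, with only cosmetic differences in notation.
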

\begin{proof}
If $\tilde A$ is abelian then the proposition is trivially true with $r=1$, $A_1=\tilde A$ and $N=\{1\}$. We may therefore assume by induction that $s\ge\tilde s\ge2$ and that the proposition holds for all smaller values of $\tilde s$.

Applying Proposition \ref{prop:p.grp.inside.prod.of.lower.step}, we obtain an integer $r_0\le\tilde K^{O(1)}$, a normal subgroup $N_0$ of $G$ inside $A^{K^{O_{m,s}(1)}}$, and $\tilde K^{O(1)}$-approximate groups $\tilde A_1,\ldots,\tilde A_{r_0}\subset\tilde A^{O(1)}$ such that $[\tilde A_i,\ldots,\tilde A_i]_{\tilde s}\subset N_0$ for all $i$, and such that
\begin{equation}\label{eq:last.post.ref}
\tilde A\subset\tilde A_1\cdots\tilde A_{r_0}.
\end{equation}
Writing $\rho:G\to G/N_0$ for the canonical projection, Lemma \ref{lem:nilp.gens} implies that the $\tilde K^{O(1)}$-approximate groups $\rho(\tilde A_1),\ldots,\rho(\tilde A_{r_0})\subset\rho(\tilde A)^{O(1)}\subset\rho(A)^{O(m)}$ are of step at most $\tilde s-1$, and by (\ref{eq:last.post.ref}) they satisfy
\[
\rho(\tilde A)\subset\rho(\tilde A_1)\cdots\rho(\tilde A_{r_0}).
\]
Since $G/N_0$ is generated by the $K$-approximate group $\rho(A)$, the induction hypothesis therefore gives, for each $i=1,\ldots,r_0$, an integer $r_i\le\tilde K^{O_{\tilde s}(1)}$, a normal subgroup $N_i$ of $G/N_0$ inside $\rho(A)^{K^{O_{m,s,\tilde s}(1)}}$, and $\tilde K^{O_{\tilde s}(1)}$-approximate groups $A_1^{(i)},\ldots,A_{r_i}^{(i)}\subset\rho(\tilde A_i)^{O_{\tilde s}(1)}N_i\subset\rho(\tilde A)^{O_{\tilde s}(1)}N_i$ such that $[A_j^{(i)},A_j^{(i)}]\subset N_i$ for all $j$ and such that
\[
\tilde A_i\subset\rho^{-1}(A_1^{(i)})\cdots\rho^{-1}(A_{r_i}^{(i)}).
\]
It follows from Lemma \ref{lem:pullback} that the $\rho^{-1}(A_j^{(i)})$ are $\tilde K^{O_{\tilde s}(1)}$-approximate groups, and so the proposition is satisfied by taking $N=\rho^{-1}(N_1\cdots N_{r_0})$.
\end{proof}
\begin{remark}When applying Proposition \ref{prop:p.grp.inside.prod.of.abelian} the dependence of various implied constants on $\tilde s$ may of course be absorbed into the dependence on $s$. The dependence on $\tilde s$ is present only to make the proof clearer.
\end{remark}
\begin{proof}[Proof of Theorem \ref{thm:freiman.p-group}]
Let $N$ and $A_1,\ldots,A_r$ be the normal subgroup and approximate groups given by applying Proposition \ref{prop:p.grp.inside.prod.of.abelian} with $\tilde A=A$, noting that
\begin{equation}\label{eq:containment.of.N}
N\subset A^{K^{O_s(1)}}
\end{equation}
and that
\begin{equation}\label{eq:bound.on.r}
r\le K^{O_s(1)}.
\end{equation}
Write $\rho:G\to G/N$ for the canonical projection and note that $\rho(A_1),\ldots,\rho(A_r)$ are abelian $K^{O_s(1)}$-approximate groups. Theorem \ref{thm:ab.freiman} therefore implies that there are subgroups $H_i\subset\rho(A_i)^4\subset\rho(A)^{O_s(1)}$ and abelian progressions
\begin{equation}\label{eq:containment.of.Pi}
P_i\subset\rho(A_i)^{K^{O_s(1)}},
\end{equation}
each of rank at most $K^{O_s(1)}$, such that
\begin{equation}\label{eq:A.in.prod.of.coset.progs}
\rho(A)\subset H_1P_1\cdots H_rP_r.
\end{equation}
Proposition \ref{prop:grp.in.normal} then implies that for each $i=1,\ldots,r$ there is a normal subgroup $N_i$ of $G/N$ that contains $H_i$ and satisfies $N_i\subset\rho(A)^{K^{O_s(1)}}$. By (\ref{eq:bound.on.r}), the normal subgroup $N_1\cdots N_r$ also satisfies
\[
N_1\cdots N_r\subset\rho(A)^{K^{O_s(1)}},
\]
and so by (\ref{eq:containment.of.N}) the normal subgroup $H=\rho^{-1}(N_1\cdots N_r)$ satisfies
\[
H\subset A^{K^{O_s(1)}}.
\]
Write $\tau:G\to G/H$ for the canonical projection. The bound on the ranks of the $P_i$, combined with (\ref{eq:bound.on.r}) and (\ref{eq:A.in.prod.of.coset.progs}), implies that $\tau(A)$ is contained in an ordered progression $P$ of rank at most $K^{O_s(1)}$, and by (\ref{eq:bound.on.r}) and (\ref{eq:containment.of.Pi}) we have $P\subset\tau(A)^{K^{O_s(1)}}$. The result then follows from (\ref{eq:equiv.of.progs}).
\end{proof}
\appendix
\section{Commutators and progressions}\label{sec:commutators}
In this appendix we develop enough theory about commutators to be able to define a nilpotent progression, and hence to understand the statement of Theorem \ref{thm:freiman.p-group}. Much of this theory is already in the literature, but some of our arguments rely on an inspection of existing proofs, not just on the results, and in setting up notation for these arguments we unavoidably end up repeating much of this standard material.

An argument from \cite[\S11.1]{hall} shows that commutators in the generators of a nilpotent group satisfy the following rather nice property.
\begin{lemma}\label{lem:collecting}
Let $x_1,\ldots,x_r$ be elements in an $s$-step nilpotent group and let $y_1\prec\ldots\prec y_k$ be a complete list of commutators of total weight at most $s$ in the $x_i$, with $y_i=x_i$ for $i=1,\ldots,r$. Then every element of the group generated by the $x_i$ can be expressed in the form
\begin{equation}\label{eq:collected}
y_1^{l_1}y_2^{l_2}\cdots y_k^{l_k},
\end{equation}
with $l_i\in\Z$.
\end{lemma}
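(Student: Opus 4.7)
The plan is to proceed by induction on the nilpotency step $s$, using the quotient $G/\Gamma_s(G)$ to reduce to a lower-step situation. For $s=1$ the group $\langle x_1,\ldots,x_r\rangle$ is abelian, the only commutators of total weight at most $s$ are the $x_i$ themselves, and any element of a finitely generated abelian group can be rearranged into the form $x_1^{l_1}\cdots x_r^{l_r}$, so the base case is immediate.

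For the inductive step, suppose the result holds in every $(s-1)$-step nilpotent group. Partition the list $y_1\prec\cdots\prec y_k$ into those $y_1,\ldots,y_{j_0}$ of total weight $<s$ and those $y_{j_0+1},\ldots,y_k$ of total weight exactly $s$. The subgroup $\Gamma_s(G)$ is central in $G$ because $[\Gamma_s(G),G]\subset\Gamma_{s+1}(G)=\{1\}$, and I claim that it is generated, as an abelian group, by the commutators $y_{j_0+1},\ldots,y_k$ (see below). Granting this, the quotient $G/\Gamma_s(G)$ is an $(s-1)$-step nilpotent group generated by the images $\bar x_1,\ldots,\bar x_r$, and the complete list of commutators of weight at most $s-1$ in the $\bar x_i$ is precisely $\bar y_1\prec\cdots\prec\bar y_{j_0}$. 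Applying the inductive hypothesis to any $g\in\langle x_1,\ldots,x_r\rangle$, we obtain integers $l_1,\ldots,l_{j_0}$ such that $\bar g=\bar y_1^{l_1}\cdots\bar y_{j_0}^{l_{j_0}}$ in $G/\Gamma_s(G)$. Lifting back to $G$ yields some $z\in\Gamma_s(G)$ with $g=y_1^{l_1}\cdots y_{j_0}^{l_{j_0}}z$, and by the claim combined with the centrality of $\Gamma_s(G)$ we can write $z=y_{j_0+1}^{l_{j_0+1}}\cdots y_k^{l_k}$ for suitable integers, which yields the desired collected form $(\ref{eq:collected})$.

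The one remaining point, and the main technical obstacle, is the claim that $\Gamma_s(G)$ is generated as an abelian group by the commutators in the $x_i$ of total weight exactly $s$. The plan is to argue by downward induction on $t\le s$ that $\Gamma_t(G)$, modulo $\Gamma_{t+1}(G)$, is generated by the commutators of total weight exactly $t$: at each step, $\Gamma_t(G)$ is by definition generated by elements of the form $[\alpha,g]$ with $\alpha\in\Gamma_{t-1}(G)$ and $g\in G$, and using the commutator identities $[\alpha\beta,g]\equiv[\alpha,g][\beta,g]$ and $[\alpha,gh]\equiv[\alpha,g][\alpha,h]$ modulo $\Gamma_{t+1}(G)$ together with the inductive description of $\Gamma_{t-1}(G)$, one reduces to commutators of the form $[\gamma,x_i]$ with $\gamma$ a commutator of weight $t-1$ in the $x_j$, which are themselves commutators of weight $t$. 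Taking $t=s$, where $\Gamma_{s+1}(G)=\{1\}$, gives the claim. This is the argument of \cite[\S11.1]{hall}, and is the only non-trivial ingredient beyond the induction itself.
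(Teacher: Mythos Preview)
Your argument is correct and complete for the bare existence statement (modulo a harmless slip: the induction on $t$ you describe is upward, not ``downward'', since you use the description of $\Gamma_{t-1}(G)$ to obtain that of $\Gamma_t(G)$). The claim that $\Gamma_s(G)$ is generated by the weight-$s$ commutators in the $x_i$ is standard and your sketch of it is fine.

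However, your route is genuinely different from the paper's. The paper does not induct on $s$ via the quotient $G/\Gamma_s(G)$; instead it introduces the \emph{collecting process}, an explicit rewriting algorithm that repeatedly applies the identities (\ref{eq:collecting.1})--(\ref{eq:collecting.4}) to move the $\prec$-least uncollected letter to the left, spawning higher-weight commutators as it goes, and argues that in an $s$-step group this terminates in collected form. The advantage of your approach is conceptual economy: the inductive structure is transparent and you avoid defining the algorithm. The advantage of the paper's approach is that the collecting process is itself a tool: it singles out the \emph{basic} commutators (Definition~\ref{def:basic.coms}), and a careful count of how many copies of each commutator the algorithm can create yields the quantitative bound $|l_i|\le L^{\chi(c_i)}$ of Lemma~\ref{lem:quant.collect} and Corollary~\ref{cor:basic.collected}, which in turn underpins the definition of nilpotent progressions and the comparison (\ref{eq:equiv.of.progs}). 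Your structural induction does not obviously give any control on the exponents $l_i$, so while it proves the lemma as stated, it would not by itself support the later quantitative applications.
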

The proof of Lemma~\ref{lem:collecting} is in the form of an algorithm, called the \emph{collecting process} and defined below, that converts a finite string involving the elements $x_i$ into the form (\ref{eq:collected}). The collecting process rests on, and is motivated by, the following commutator identities, which are \cite[(11.1.5, 11.1.8, 11.1.9, 11.1.11)]{hall}.
\begin{equation}\label{eq:collecting.1}
vu=uv[v,u]
\end{equation}
\begin{equation}\label{eq:collecting.2}
v^{-1}u=u[v,u]^{-1}v^{-1}
\end{equation}
\begin{equation}\label{eq:collecting.3}
vu^{-1}=u^{-1}vv_2v_4\cdots v_5^{-1}v_3^{-1}v_1^{-1}\qquad(v_1:=[v,u];\,v_{i+1}:=[v_i,u])
\end{equation}
\begin{equation}\label{eq:collecting.4}
v^{-1}u^{-1}=u^{-1}v_1v_3v_5\cdots v_4^{-1}v_2^{-1}v^{-1}\qquad(v_1:=[v,u];\,v_{i+1}:=[v_i,u])
\end{equation}
\begin{definition}[The collecting process {\cite[\S11.1]{hall}}]
Let $a:=a_1\cdots a_n$ be a string of commutators, and their inverses, involving elements $x_1,\ldots,x_r$ of an $s$-step nilpotent group. We say that $a$ is in \emph{collected form} if $a_1\prec\ldots\prec a_n$. For $m\in[0,n]$ we say that $a$ has \emph{collected part} $a_1\cdots a_m$ if $a_1\prec\ldots\prec a_m$ and $a_m\prec a_i$ for $i>m$ but $a_{m+1}\succ a_j$ for some $j>m+1$. The string $a_{m+1}\cdots a_n$ is called the \emph{uncollected part} of $a$.

We define a \emph{collecting operator} $\mathcal{C}$ on the set of strings of commutators as follows. If $a$ is in collected form then we simply define $\mathcal{C}(a):=a$. If $a$ is not in collected form then write $\beta$ for the commutator earliest with respect to the order $\prec$ such that $\beta^{\pm1}$ appears in the uncollected part of $a$, and suppose that $a_j$ is the leftmost of the $a_i$ in the uncollected part that is equal to $\beta^{\pm1}$. Write $\alpha$ for the commutator satisfying $a_{j-1}=\alpha^{\pm1}$, and define $\alpha_1:=[\alpha,\beta]$ and $\alpha_{i+1}:=[\alpha_i,\beta]$ for $i=2,\ldots,s-1$. Define $\alpha_i$ to be trivial for $i\ge s$.

If $a_j=\beta$ and $a_{j-1}=\alpha$ then
\[
\mathcal{C}(a):=a_1\cdots a_{j-2}\beta\alpha[\alpha,\beta]a_{j+1}\cdots a_n,
\]
and we say that $\mathcal{C}$ performs a \emph{transformation of type 1} on $a$. If $a_j=\beta$ and $a_{j-1}=\alpha^{-1}$ then
\[
\mathcal{C}(a):=a_1\cdots a_{j-2}\beta[\alpha,\beta]^{-1}\alpha^{-1}a_{j+1}\cdots a_n,
\]
and we say that $\mathcal{C}$ performs a \emph{transformation of type 2} on $a$. If $a_j=\beta^{-1}$ and $a_{j-1}=\alpha$ then
\[
\mathcal{C}(a):=a_1\cdots a_{j-2}\beta^{-1}\alpha\alpha_2\alpha_4\cdots\alpha_5^{-1}\alpha_3^{-1}\alpha_1^{-1}a_{j+1}\cdots a_n,
\]
and we say that $\mathcal{C}$ performs a \emph{transformation of type 3} on $a$. If $a_j=\beta^{-1}$ and $a_{j-1}=\alpha^{-1}$ then
\[
\mathcal{C}(a):=a_1\cdots a_{j-2}\beta^{-1}\alpha_1\alpha_3\alpha_5\cdots\alpha_4^{-1}\alpha_2^{-1}\alpha^{-1}a_{j+1}\cdots a_n,
\]
and we say that $\mathcal{C}$ performs a \emph{transformation of type 4} on $a$. The collecting process is then the sequence $a,\mathcal{C}(a),\mathcal{C}^2(a),\ldots$.

Finally, we will often talk about the set of \emph{copies} of a commutator in the collecting process. Each $a_i$ in the original string $a$ is said to be a copy of the commutator $\gamma$ if $a_i=\gamma$, or a copy of $\gamma^{-1}$ if $a_i=\gamma^{-1}$. The collecting operator $\mathcal{C}$ is then thought of as permuting the existing copies of the commutators and of creating a new copy of $[\alpha,\beta]$ in the case of a transformation of type 1; a new copy of $[\alpha,\beta]^{-1}$ in the case of a transformation of type 2; new copies of $\alpha_2,\alpha_4\ldots$ and $\alpha_1^{-1}\alpha_3^{-1}\ldots$ in the case of a transformation of type 3; and new copies of $\alpha_1,\alpha_3\ldots$ and $\alpha_2^{-1}\alpha_4^{-1}\ldots$ in the case of a transformation of type 4.
\end{definition}
\begin{remark}
The collecting process can be defined for \emph{positive} strings of commutators (i.e. those involving only commutators, not their inverses) without the assumption that the $x_i$ are elements of a nilpotent group; see \cite[\S11.1]{hall} for details.
\end{remark}
\begin{proof}[Proof of Lemma~\ref{lem:collecting}]
An element in the group generated by the $x_i$ is, by definition, equal in the group to some string $a$ in the $x_i$. It is clear (see \cite[\S11.1]{hall} for details) that the sequence $a,\mathcal{C}(a),\mathcal{C}^2(a),\ldots$ of strings in the collecting process is eventually constant and in collected form (modulo commutators of weight greater than $s$). Finally, identities (\ref{eq:collecting.1}), (\ref{eq:collecting.2}), (\ref{eq:collecting.3}) and (\ref{eq:collecting.4}) show that the sequence $a,\mathcal{C}(a),\mathcal{C}^2(a),\ldots$ is constant as a sequence of group elements.
\end{proof}
Only certain commutators will arise during the collecting process. For example, $x_2$ never needs to be moved to the left of $x_1$, and so the exponent of the commutator $[x_1,x_2]$ in expression (\ref{eq:collected}) will always be zero. It thus makes sense to introduce the following definition.
\begin{definition}[Basic commutators]\label{def:basic.coms}
The commutators that can arise when the collecting process is applied to a string consisting only of $x_1,\ldots,x_r$, and can hence have non-zero exponents in expression (\ref{eq:collected}), are called \emph{basic commutators}. We shall denote them $c_1\prec\ldots\prec c_t$.
\end{definition}
A more explicit definition of basic commutators is given in \cite[\S11.1]{hall}, but it is formulated precisely so as to be equivalent to Definition~\ref{def:basic.coms}.

We are now in a position to make another definition, first seen in \cite{tor.free.nilp}.
\begin{definition}[Nilpotent progression]\label{def:nilp.prog}
Let $G$ be a nilpotent group. Let $x_1,\ldots,x_r$ be elements of $G$, and let $c_1,\ldots,c_t$ be the list of basic commutators in the $x_i$. Let $L=(L_1,\ldots,L_r)$ be a vector of positive integers. Then the \emph{nilpotent progression} on $x_1,\ldots,x_r$ with side lengths $L_1,\ldots,L_r$ is defined to be the set
\[
P(x_1,\ldots,x_r;L):=\{c_1^{l_1}\cdots c_t^{l_t}:|l_i|\le L^{\chi(c_i)}\}.
\]
\end{definition}
The following results give a straightforward relationship between nilprogressions and nilpotent progressions.
\begin{lemma}\label{lem:quant.collect}
Let $x_1,\ldots,x_r$ be elements of a nilpotent group and let $L=(L_1,\ldots,L_r)$ be a vector of positive integers. Let $y$ be a string of $x_i$ and their inverses featuring $p_j$ copies of $x_j$ and $n_j$ copies of $x_j^{-1}$, with $p_j+n_j\le L_j$. Then for each $i=1,\ldots,t$, applying the collecting process to $y$ results in at most $L^{\chi(c_i)}$ copies of $c_i$ and $c_i^{-1}$ between them.
\end{lemma}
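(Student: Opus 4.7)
The plan is to proceed by strong induction on the order $\prec$ of basic commutators (equivalently, roughly by total weight). For the base case, a weight-$1$ basic commutator is one of the $x_j$, and an inspection of the four commutator identities (\ref{eq:collecting.1})--(\ref{eq:collecting.4}) shows that every new copy created by a transformation has weight at least $2$. Therefore the count of $x_j^{\pm 1}$ is never altered during the process, and remains at its initial value $p_j+n_j \le L_j = L^{\chi(x_j)}$, as required.

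For the inductive step, fix a basic commutator $c$ of total weight at least $2$ and assume the result for all basic commutators preceding $c$ in the order $\prec$. Looking at the four types of transformation, a new copy of $c^{\pm 1}$ is produced in the transformation of the pair $a_{j-1}=\alpha^{\pm 1}$, $a_j=\beta^{\pm 1}$ precisely when $c$ appears in the cascade, i.e.\ when $c = [\alpha,\beta,\ldots,\beta]$ with some $k\ge 1$ copies of $\beta$; moreover exactly one copy of $c^{\pm 1}$ is produced per such transformation, because the cascade contributes at most one commutator of each total weight. The key counting step is the observation that, for any fixed specific copy of $\alpha^{\pm 1}$ and specific copy of $\beta^{\pm 1}$ ever appearing during the process, there is at most one transformation involving this pair: after the transformation, the $\beta^{\pm 1}$-copy lies to the left of the $\alpha^{\pm 1}$-copy and, since only smaller commutators are ever moved further left, these two copies never become adjacent in the reverse order again. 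Applying the inductive hypothesis to both $\alpha$ and $\beta$, the number of transformations of a given type $(\alpha,\beta)$ is therefore at most $L^{\chi(\alpha)} \cdot L^{\chi(\beta)} = L^{\chi(\alpha)+\chi(\beta)}$.

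Finally, I would sum over the finitely many decompositions $c = [\alpha,\beta,\ldots,\beta]$ with $\alpha,\beta$ basic and $k\ge 1$. Writing $\chi(c) = \chi(\alpha) + k\chi(\beta)$, each term contributes at most $L^{\chi(\alpha)+\chi(\beta)} \le L^{\chi(c)}$, with the $k=1$ decomposition being the dominant term and the others strictly smaller when $L^{\chi(\beta)} > 1$. The main obstacle I anticipate is the bookkeeping needed to ensure the counts combine cleanly into exactly $L^{\chi(c)}$ rather than a constant multiple thereof: one needs to be careful that copies of $\alpha$ and $\beta$ created \emph{during} the process (not just those present initially) are correctly accounted for under the ``at most one transformation per pair'' principle, and that no double-counting occurs across distinct $(\alpha,\beta,k)$ decompositions of the same $c$. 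A clean way to handle this is to assign to each newly created copy of each commutator a canonical ``parent-pair trace'' derived from the unique transformation that created it, so that distinct copies correspond to distinct choices of parent copies and the count telescopes through the inductive bounds.
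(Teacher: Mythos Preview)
Your induction scheme and base case match the paper's, and your ``at most one transformation per pair of copies'' observation is correct and is essentially the injectivity ingredient. The genuine gap is exactly the one you flag: your count, summing over all cascade decompositions $c=[\alpha,\beta,\ldots,\beta]$ with $k\ge 1$, yields $\sum_k L^{\chi(\alpha)+\chi(\beta)}$, and already the $k=1$ term alone saturates $L^{\chi(c)}$, so the total genuinely overshoots. Your proposed ``parent-pair trace'' does not fix this, because tracing a copy of $c$ back to the pair $(\alpha,\beta)$ actually swapped still leaves you summing over the different possible $(\alpha,\beta,k)$.

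The paper's resolution is to work only with the \emph{unique outer} decomposition $c=[c_i,c_j]$ and build an injection from copies of $c^{\pm1}$ into pairs (copy of $c_i^{\pm1}$, copy of $c_j^{\pm1}$). For a copy $z$ of $c$ produced by a type 1 or 2 transformation, or a type 3 or 4 transformation with $q=0$, one takes the pair that was interchanged. The key trick is for a copy $z$ produced deeper in a type 3 or 4 cascade, i.e.\ when the swapped pair was $(c_k^{\pm1},c_j^{-1})$ with $c_i=[c_k,c_j,\ldots,c_j]$: one does \emph{not} trace back to $c_k$, but instead pairs $z$ with the \emph{sibling} copy of $c_i$ created in the very same transformation (together with the same copy of $c_j^{-1}$). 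That sibling copy appears to the right of the $c_j^{-1}$ and so is never interchanged with it, guaranteeing no collision with the other cases. This gives a genuine injection into pairs of $c_i^{\pm1}$- and $c_j^{\pm1}$-copies, hence the exact bound $L^{\chi(c_i)}L^{\chi(c_j)}=L^{\chi(c)}$ with no summation over decompositions and no constant loss.
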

\begin{proof}
The collecting process never creates any new copies of $x_i$ or $x_i^{-1}$, and so the lemma certainly holds whenever $c_i$ has total weight 1. We may therefore, by induction, prove the lemma for basic commutators of a given total weight
\begin{equation}\label{eq:quant.collect.2}
\omega>1
\end{equation}
under the assumption that it holds for all commutators of total weight less than $\omega$.

It follows from (\ref{eq:quant.collect.2}) that any commutator of weight $\omega$ arising from the collecting process is of the form $[c_i,c_j]^{\pm1}$ for some basic commutators $c_i,c_j$ of total weight less than $\omega$. We will prove the lemma by defining an injection $f$ from the set of copies of $[c_i,c_j]^{\pm1}$ to the set of pairs of copies of $c_i^{\pm1}$ and $c_j^{\pm1}$. This is sufficient as by induction the number of such pairs is at most $L^{\chi(c_i)}L^{\chi(c_j)}=L^{\chi([c_i,c_j])}$.

If a copy $z$ of $[c_i,c_j]^{\pm1}$ arose from a collecting transformation of type 1 or 2 then this can only have been as a result of interchanging a copy $a$ of $c_i^{\pm1}$ and a copy $b$ of $c_j$. In this case simply define $f(z)=(a,b)$.

If $z$ arose as from a collecting transformation of type 3 or 4 then it must have arisen from interchanging a copy $b'$ of $c_j^{-1}$ and a copy of some commutator $c_k$. Writing $v_0:=c_k$ and $v_{q+1}=[v_q,c_j]$, it must therefore be the case that $c_i$ is equal to some $v_q$. If $q=0$ then $z$ arose from interchanging a copy $a'$ of $c_i$ with $b'$, and we define $f(z)=(a',b')$. If $q>0$ then exactly one copy $a''$ of $v_q$ will also have arisen as a result of the same transformation as that producing $z$. In this case we define $f(z)=(a'',b')$.

Now that we have defined $f$ it remains to show that it is an injection. A given pair of copies of $c_i^{\pm1}$ and $c_j^{\pm1}$ will be interchanged at most once during the collection process, and so if two distinct copies $z,z'$ of $[c_i,c_j]$ both arose from interchanging copies of $c_i$ and $c_j$ (which is to say as a result of a transformation of type 1 or 2, or of type 3 or 4 in the case $q=0$) then we certainly have $f(z)\ne f(z')$. Furthermore, a given transformation of type 3 or 4 produces at most one copy of any commutator $c_l$, and so if $z$ and $z'$ both arose from transformations of type 3 or 4 in the case $q>0$ then we also have $f(z)\ne f(z')$.

Finally, suppose a copy $z$ of $[c_i,c_j]^{\pm1}$ arose from a transformation of type 3 or 4 in the case $q>0$. It follows from (\ref{eq:collecting.3}) and (\ref{eq:collecting.4}) that the copy $a''$ of $c_i=v_q$ produced as a result of the transformation producing $z$ will already be to the right of $b'$, and so $a''$ and $b'$ will never have to be interchanged and so no $z'$ arising from a transformation of type 1 or 2, or of type 3 or 4 in the case $q=0$, will have $f(z')=(a'',b')$.
\end{proof}
\begin{corollary}\label{cor:basic.collected}
Let $x_1,\ldots,x_r$ be elements of a nilpotent group and let $L=(L_1,\ldots,L_r)$ be a vector of positive integers. Let $y$ be a string of $x_i$ and their inverses featuring $p_j$ copies of $x_j$ and $n_j$ copies of $x_j^{-1}$, with $p_j+n_j\le L_j$. Then $y$ can be expressed in the form
\begin{equation}\label{eq:basic.collected}
c_1^{l_1}c_2^{l_2}\cdots c_t^{l_t},
\end{equation}
with $l_i=p_i-n_i$ for $i=1,\ldots,r$ and $|l_i|\le L^{\chi(c_i)}$ for every $i$.
\end{corollary}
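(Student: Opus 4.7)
The plan is to extract the corollary directly from Lemma~\ref{lem:collecting} together with Lemma~\ref{lem:quant.collect}, with one small additional bookkeeping observation about copies of the generators $x_i$ themselves. First I would apply Lemma~\ref{lem:collecting} to the string $y$, which produces an expression $y_1^{l_1}\cdots y_k^{l_k}$ where $y_1\prec\cdots\prec y_k$ enumerates all commutators in the $x_i$ of total weight at most $s$. By Definition~\ref{def:basic.coms}, the only commutators that can arise with nonzero exponent from the collecting process applied to a string of $x_i$ and their inverses are the basic commutators $c_1,\ldots,c_t$, so the expression collapses to $c_1^{l_1}\cdots c_t^{l_t}$ as required.

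To verify $|l_i|\le L^{\chi(c_i)}$, I would argue that in the final collected form, the block $c_i^{l_i}$ is obtained by concatenating the copies of $c_i$ and $c_i^{-1}$ that remain at the end of the collecting process, so $|l_i|$ is bounded above by the total number of such copies (positive count minus negative count in absolute value is at most positive count plus negative count). Lemma~\ref{lem:quant.collect} bounds this total by $L^{\chi(c_i)}$, giving the desired inequality.

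The only slightly delicate point is the equality $l_i=p_i-n_i$ for $i\le r$, i.e.\ for the weight-$1$ basic commutators $c_i=x_i$. Here I would observe that each of the four transformation types of $\mathcal{C}$ creates new copies only of commutators of the form $[\alpha,\beta]^{\pm 1}$ or $\alpha_j^{\pm 1}=[\alpha,\beta,\ldots,\beta]^{\pm 1}$, all of which have total weight strictly greater than $1$. Consequently, no step of the collecting process ever creates or destroys a copy of any $x_i^{\pm 1}$; the operator only permutes the existing copies. Thus, the signed count of $x_i$ in the string is invariant throughout the process, so at the end it equals the signed count $p_i-n_i$ in the original string $y$, i.e.\ $l_i=p_i-n_i$. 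This is the step that requires the most care, but it follows directly from inspection of the four cases in the definition of $\mathcal{C}$; no genuine obstacle arises.
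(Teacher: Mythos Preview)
Your proposal is correct and matches the paper's intended argument: the paper states this corollary without proof, treating it as immediate from Lemma~\ref{lem:quant.collect} and the description of the collecting process, and your write-up fills in exactly the natural details (the bound $|l_i|\le L^{\chi(c_i)}$ from Lemma~\ref{lem:quant.collect}, and the observation that $\mathcal{C}$ never creates or destroys weight-$1$ letters so the signed count of each $x_i$ is preserved).
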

\begin{corollary}\label{cor:nilprog.in.nilp.prog}
Let $x_1,\ldots,x_r$ be elements of a nilpotent group and let $L=(L_1,\ldots,L_r)$ be a vector of positive integers. Then
\[
P^*(x_1,\ldots,x_r;L)\subset P(x_1,\ldots,x_r;L).
\]
\end{corollary}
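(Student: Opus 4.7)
The plan is essentially to invoke Corollary \ref{cor:basic.collected}, which has already done all of the combinatorial work. Unpacking the definitions, an arbitrary element $y \in P^*(x_1,\ldots,x_r;L)$ is by Definition \ref{def:nilprog} the interpretation in the group of a formal string in the $x_i$ and their inverses, in which for each $j$ the total number $p_j + n_j$ of appearances of $x_j$ and $x_j^{-1}$ is at most $L_j$. This is precisely the hypothesis of Corollary \ref{cor:basic.collected}.

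Applying that corollary therefore rewrites $y$ as a group element of the form $c_1^{l_1} c_2^{l_2} \cdots c_t^{l_t}$, where the $c_i$ are the basic commutators in the $x_j$ and the exponents satisfy $|l_i| \le L^{\chi(c_i)}$ for every $i$. By Definition \ref{def:nilp.prog} this means $y \in P(x_1,\ldots,x_r;L)$, and since $y \in P^*$ was arbitrary the containment $P^*(x_1,\ldots,x_r;L) \subset P(x_1,\ldots,x_r;L)$ follows.

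There is no real obstacle to overcome at this stage: the hard combinatorial content, namely that the collecting process produces basic commutators $c_i$ with exponents bounded by the \emph{monomial} quantity $L^{\chi(c_i)}$ rather than something larger, is precisely what Lemma \ref{lem:quant.collect} and its immediate corollary establish via the injection $f$ from copies of $[c_i,c_j]^{\pm 1}$ to pairs of copies of $c_i^{\pm 1}$ and $c_j^{\pm 1}$. Consequently the proof of the corollary is a one-line deduction and does not require any additional argument beyond matching the two definitions against the output of Corollary \ref{cor:basic.collected}.
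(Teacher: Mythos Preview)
Your proof is correct and matches the paper's approach exactly: the paper states this as an immediate corollary of Corollary~\ref{cor:basic.collected} without further argument, and your proposal simply unpacks the definitions of $P^*$ and $P$ against that result.
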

Here, of course, $P^*$ denotes a nilprogression, as in Definition~\ref{def:nilprog}, while $P$ denotes a nilpotent progression, as in Definition~\ref{def:nilp.prog}.
\section{Decompositions of commutators}\label{sec:commutators.2}
It is a straightforward exercise to show that in a 2-step nilpotent group we have the identity
\[
[xx',yy']=[x,y][x',y][x,y'][x',y'].
\]
The aim of this appendix is to develop a more general calculus for expressing commutators of products as products of commutators.

In order to achieve this we will need to consider commutators involving $x_1,\ldots,x_r$ in more detail than in Appendix \ref{sec:commutators}. For instance, given a commutator $\alpha$ it will be helpful to be able to refer to the lower-weight components that were used to construct $\alpha$, such as the components $x_1,x_2,x_3,[x_2,x_3]$ in the case $\alpha=[x_1,[x_2,x_3]]$. We formalise this notion as follows.
\begin{definition}[Arguments and components of a commutator]
To every formal commutator $\alpha$ in the letters $x_1,\ldots,x_r$ we assign a set $C(\alpha)$ of \emph{components}, defined recursively as follows. For each $i$ define $C(x_i)$ to be the singleton $\{x_i\}$ and, given two formal commutators $\alpha,\alpha'$ in the $x_i$, define $C([\alpha,\alpha']):=C(\alpha)\cup C(\alpha')\cup\{[\alpha,\alpha']\}$. We shall denote by $\alpha\sqsubset\beta$ or $\beta\sqsupset\alpha$ the fact that $\alpha$ is a component of $\beta$. The letters $x_{i_j}$ featuring in a formal commutator $\alpha$ will be called the \emph{arguments} of $\alpha$.
\end{definition}
The principal results are the following.
\begin{prop}\label{prop:com.components}
Let $\Gamma$ be a nilpotent group and for each $j=1,\ldots,r$ let $x^{(j)}_1,\ldots,x^{(j)}_{m_j}$ be elements of $\Gamma$. Suppose that $\alpha$ is a commutator form of weight $r$. Then there exist commutators $\eta_1,\ldots,\eta_t$ in the $x^{(j)}_i$ with the following properties.
\begin{enumerate}
\renewcommand{\labelenumi}{(\roman{enumi})}
\item We have $\alpha(\prod_{i=1}^{m_1}x^{(1)}_i,\ldots,\prod_{i=1}^{m_r}x^{(r)}_i)=\eta_1\cdots\eta_t$.
\item For each tuple $(x^{(1)}_{i_1},\ldots,x^{(r)}_{i_r})$ there is some $\eta_l$ equal to $\alpha(x^{(1)}_{i_1},\ldots,x^{(r)}_{i_r})$.
\item For every $l\in[1,t]$ and every $j\in[1,r]$ there is at least one $i\in[1,m_j]$ with $x^{(j)}_i\sqsubset\eta_l$.
\item The $\eta_l$ are all distinct as formal commutators in the $x^{(j)}_i$.
\item If $\eta_l$ is not of the form $\alpha(x^{(1)}_{i_1},\ldots,x^{(r)}_{i_r})$ then $\eta_l$ has weight greater than $r$.
\end{enumerate}
\end{prop}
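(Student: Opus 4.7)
We plan to prove Proposition \ref{prop:com.components} by induction on the weight $r$ of $\alpha$. For $r = 1$ the form $\alpha$ is the identity $\iota$ on the single family $(x^{(1)}_i)_{i=1}^{m_1}$, so taking $\eta_i := x^{(1)}_i$ satisfies all five properties trivially.

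For $r \ge 2$ write $\alpha = [\gamma_1, \gamma_2]$ where $\gamma_1, \gamma_2$ are commutator forms of strictly smaller weights $r_1, r_2$ with $r_1 + r_2 = r$; after relabelling the families we may assume $\gamma_1$ takes its inputs from the first $r_1$ families and $\gamma_2$ from the remaining $r_2$. The inductive hypothesis applied separately to $\gamma_1$ and $\gamma_2$ then supplies factorizations
\[
\gamma_1\bigl(\textstyle\prod_i x^{(1)}_i,\ldots,\prod_i x^{(r_1)}_i\bigr) = \mu_1\cdots\mu_p, \qquad \gamma_2\bigl(\textstyle\prod_i x^{(r_1+1)}_i,\ldots,\prod_i x^{(r)}_i\bigr) = \nu_1\cdots\nu_q
\]
whose factors $\mu_a$, $\nu_b$ satisfy the analogues of (i)--(v) for $\gamma_1$ and $\gamma_2$ respectively. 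We then expand $[\mu_1\cdots\mu_p,\nu_1\cdots\nu_q]$ by iteratively applying the standard identities
\[
[AB,C]=[A,C]\,[[A,C],B]\,[B,C], \qquad [A,BC]=[A,C]\,[A,B]\,[[A,B],C],
\]
together with $[E,F]^G=[E,F]\,[[E,F],G]$, until no products or conjugations remain inside any commutator bracket; since $\Gamma$ is nilpotent, any factors of weight exceeding the step of $\Gamma$ are trivial and may be discarded, ensuring the process terminates. The resulting product of formal commutators in the $\mu_a$ and $\nu_b$, once the $\mu_a$ and $\nu_b$ are themselves substituted by their commutator expressions in the $x^{(j)}_i$, is our candidate $\eta_1\cdots\eta_t$.

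Properties (i) and (ii) are then immediate: the \emph{main terms} $[\mu_a,\nu_b]$ in which $\mu_a=\gamma_1(x^{(1)}_{i_1},\ldots,x^{(r_1)}_{i_{r_1}})$ and $\nu_b=\gamma_2(x^{(r_1+1)}_{i_{r_1+1}},\ldots,x^{(r)}_{i_r})$ are pure coincide with the pure commutators $\alpha(x^{(1)}_{i_1},\ldots,x^{(r)}_{i_r})$, and the inductive version of (ii) guarantees every such pure sub-commutator of $\gamma_1$ and $\gamma_2$ appears among the $\mu_a$ and $\nu_b$. Property (iii) is inherited: by induction each $\mu_a$ contains an argument from every group $j\le r_1$ and each $\nu_b$ from every $j>r_1$, and any commutator built from a $\mu_a$ and a $\nu_b$ (possibly with further nesting by $\mu_c$'s or $\nu_d$'s) inherits these arguments. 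Property (v) holds because any non-pure factor either contains a non-pure $\mu_a$ or $\nu_b$ (of weight exceeding $r_1$ or $r_2$ by the inductive version of (v)) or else arises from genuine extra nesting in the expansion (adding at least the weight of some $\mu_c$ or $\nu_d$ to $r_1+r_2$), so its total weight strictly exceeds $r$.

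The principal obstacle will be property (iv), the formal distinctness of the $\eta_l$. The pure terms are manifestly distinct from one another (distinct argument tuples) and from the extra terms (different weights). For distinctness among the extra terms, we plan to tag each factor produced in the expansion by a \emph{recipe} recording the sequence of commutator identities applied, together with the factors they were applied to; the nested bracket structure and the ordered list of arguments in the resulting formal commutator can be read off this recipe, so distinct recipes produce distinct formal commutators. Executing this bookkeeping carefully, while conceptually routine, will constitute the technical heart of the argument.
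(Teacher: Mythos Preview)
Your overall strategy matches the paper's: induct on $r$, write $\alpha=[\gamma_1,\gamma_2]$, apply the inductive hypothesis to get factorisations into $\mu_a$ and $\nu_b$, and then expand $[\mu_1\cdots\mu_p,\nu_1\cdots\nu_q]$ via commutator identities. Properties (i)--(iii) and (v) do follow essentially as you indicate.

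The genuine gap is in your treatment of (iv). Your sentence ``the nested bracket structure and the ordered list of arguments in the resulting formal commutator can be read off this recipe, so distinct recipes produce distinct formal commutators'' asserts the crucial injectivity without justification. There are two separate issues hiding here. First, even at the level of formal commutators in the symbols $\mu_a,\nu_b$, you must show that distinct positions in your expansion tree yield distinct formal expressions; this requires pinning down the expansion algorithm and checking carefully. Second, and more importantly, the $\mu_a$ and $\nu_b$ are themselves compound formal commutators in the $x^{(j)}_i$, and you must show that \emph{substituting them in} cannot cause two distinct commutators in the $\mu_a,\nu_b$ to collapse to the same formal commutator in the $x^{(j)}_i$.

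The paper isolates exactly this second point as Lemma~\ref{lem:higher.comms.different}: if the $\mu_a$ are distinct formal commutators in one set of letters and the $\nu_b$ in a \emph{disjoint} set, and if $\beta$ is a commutator in the $\mu_a,\nu_b$ with \emph{no component of the form $[\mu_a,\mu_{a'}]$ or $[\nu_b,\nu_{b'}]$}, then the form of $\beta$ in the underlying letters determines its form in the $\mu_a,\nu_b$ (because the maximal pure-left and pure-right sub-commutators recover the $\mu_a$ and $\nu_b$ uniquely). To make this applicable, the paper's expansion lemma (Lemma~\ref{lem:comm.decomp}) is carefully arranged, and separately proved by a double induction, so that no same-side bracket $[\mu_a,\mu_{a'}]$ or $[\nu_b,\nu_{b'}]$ is ever created; this is tracked explicitly as condition (v) of that lemma. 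Your identities can be organised to respect this too, but you have neither isolated the ``no same-side bracket'' condition nor explained why the disjointness of the two variable families then forces injectivity. That is precisely the content you defer as ``conceptually routine'' bookkeeping, and it is where the actual work lies.
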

\begin{prop}\label{prop:com.powers}
Let $\alpha$ be a commutator form of weight $r$, let $x_1,\ldots,x_r$ be elements of an $s$-step nilpotent group $\Gamma$, and let $(l_1,\ldots,l_r)$ be a vector of positive integers. Let $\xi_1,\ldots,\xi_k$ be the complete list, in ascending order with respect to $\prec$, of commutators in the $x_i$ of weight greater than $r$, and write $\alpha(x_1,\ldots,x_r)$, $\zeta_1,\ldots,\zeta_t$ for the ordered list of basic commutators in $\alpha(x_1,\ldots,x_r)$, $\xi_1,\ldots,\xi_k$. Then for each $i=1,\ldots,t$ there exists an integer $m_i$ with $|m_i|\le l^{\chi(\zeta_i)}$, where $\chi(\zeta_i)$ is the weight vector of $\zeta_i$ as a commutator in the $x_i$, such that
\[
\alpha(x_1^{l_1},\ldots,x_r^{l_r})=\alpha(x_1,\ldots,x_r)^{l_1\cdots l_r}\zeta_1^{m_1}\cdots\zeta_t^{m_t}
\]
in $\Gamma$.
\end{prop}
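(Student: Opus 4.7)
The plan is to apply Proposition \ref{prop:com.components} to decompose $\alpha(x_1^{l_1},\ldots,x_r^{l_r})$ into a product of commutators, then gather the many copies of $\alpha(x_1,\ldots,x_r)$ at the front, and finally apply the collecting process of Appendix \ref{sec:commutators} to the tail, all the while keeping a careful count of the number of copies of each basic commutator produced.

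First I would introduce, for each $j$, formal symbols $x^{(j)}_1,\ldots,x^{(j)}_{l_j}$, think of $x_j^{l_j}$ as the product $\prod_{i=1}^{l_j}x^{(j)}_i$, and apply Proposition \ref{prop:com.components} to obtain an identity
\[
\alpha(x_1^{l_1},\ldots,x_r^{l_r})=\eta_1\cdots\eta_N
\]
in which, by properties (ii), (iv), (v), exactly the $l_1\cdots l_r$ commutators $\alpha(x^{(1)}_{i_1},\ldots,x^{(r)}_{i_r})$, one for each tuple $(i_1,\ldots,i_r)\in[1,l_1]\times\cdots\times[1,l_r]$, become equal to $\alpha(x_1,\ldots,x_r)$ when we identify $x^{(j)}_i$ with $x_j$, while every remaining $\eta_l$ is a commutator in the $x^{(j)}_i$ (and hence in the $x_j$) of total weight strictly greater than $r$.

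Next I would shuffle the factors so that all $l_1\cdots l_r$ copies of $\alpha(x_1,\ldots,x_r)$ appear at the left. These copies commute pairwise, so the only cost of the shuffle is that moving such a copy past an $\eta_l$ of weight $w>r$ produces, via the commutator identities (\ref{eq:collecting.1})--(\ref{eq:collecting.4}), new commutators of weight at least $r+w$, all of which still have weight greater than $r$; because $\Gamma$ is $s$-step nilpotent, this avalanche terminates, leaving
\[
\alpha(x_1^{l_1},\ldots,x_r^{l_r})=\alpha(x_1,\ldots,x_r)^{l_1\cdots l_r}\cdot\tilde\eta_1\cdots\tilde\eta_{M},
\]
with each $\tilde\eta_j$ of weight greater than $r$ in the $x_i$. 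I would then run the collecting process of Appendix \ref{sec:commutators} on the tail $\tilde\eta_1\cdots\tilde\eta_M$ to put it in the desired basic-commutator form $\zeta_1^{m_1}\cdots\zeta_t^{m_t}$, noting that all $\zeta_i$ that appear have weight strictly greater than $r$, which is consistent with their inclusion in the list $\xi_1,\ldots,\xi_k$.

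The main obstacle is the quantitative bound $|m_i|\le l^{\chi(\zeta_i)}$. The idea is to exploit the formal bookkeeping of Step 1: for any basic commutator $\zeta$ with weight vector $(\chi_1,\ldots,\chi_r)$, the number of formal realizations of $\zeta$ in the $x^{(j)}_i$, obtained by choosing one of the $l_j$ copies of $x_j$ for each of the $\chi_j$ positions in $\zeta$ that hold $x_j$, is exactly $l_1^{\chi_1}\cdots l_r^{\chi_r}=l^{\chi(\zeta)}$, and this is an upper bound on the number of copies of $\zeta^{\pm 1}$ appearing among the $\eta_l$'s produced by Proposition \ref{prop:com.components}. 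I would then adapt the injective matching argument in the proof of Lemma \ref{lem:quant.collect} to show that the shuffling step and the subsequent collecting process preserve this count multiplicatively: any newly produced copy of a basic commutator $[c_i,c_j]^{\pm 1}$ can be injectively assigned to a pair of pre-existing copies of $c_i^{\pm 1}$ and $c_j^{\pm 1}$, and one checks $l^{\chi(c_i)}\cdot l^{\chi(c_j)}=l^{\chi([c_i,c_j])}$. The delicate part is ensuring that this injection extends uniformly across all three stages (the Proposition \ref{prop:com.components} expansion, the shuffle, and the final collection), since the collection process of Appendix \ref{sec:commutators} is designed to operate on genuine strings in the generators rather than on a product of arbitrary commutators; verifying that the counting argument of Lemma \ref{lem:quant.collect} transfers mutatis mutandis to our mixed-commutator setting is where the real work lies.
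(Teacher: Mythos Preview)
Your outline is essentially the paper's argument, but you have made life harder for yourself at exactly the point you flag as ``delicate''. The paper packages your Steps 2 and 3 into a single, clean application of Corollary~\ref{cor:basic.collected}: having used Proposition~\ref{prop:com.components} to write $\alpha(x_1^{l_1},\ldots,x_r^{l_r})$ as a word in the letters $\alpha(x_1,\ldots,x_r),\xi_1,\ldots,\xi_k$ with exactly $l_1\cdots l_r$ occurrences of the first letter and at most $l^{\chi(\xi_j)}$ of each $\xi_j$ (this is isolated as Corollary~\ref{cor:com.powers}), one simply runs the collecting process \emph{in this new alphabet}. There is no ``mixed-commutator setting'' to worry about: Lemma~\ref{lem:quant.collect} and Corollary~\ref{cor:basic.collected} are stated for an arbitrary generating list, so they apply verbatim with generators $\alpha(x_1,\ldots,x_r),\xi_1,\ldots,\xi_k$ and lengths $L_0=l_1\cdots l_r$, $L_j=l^{\chi(\xi_j)}$. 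The shuffle of $\alpha$-copies to the left is then just the first phase of collection, and the bound on $|m_i|$ is exactly $L^{\chi'(\zeta_i)}$, where $\chi'(\zeta_i)$ is the weight vector of $\zeta_i$ in the new alphabet; since weight vectors add under commutator formation, this equals $l^{\chi(\zeta_i)}$ on the nose.

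So your proof is correct, but your separation into ``shuffle, then collect'' and your concern that Lemma~\ref{lem:quant.collect} must be re-proved are both unnecessary: a change of alphabet dissolves the difficulty.
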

\begin{remark}The commutators $\zeta_i$ may not be distinct as commutators in the $x_i$, but they are distinct as formal commutators in the letters $\xi_i$.
\end{remark}
In proving these results we will repeatedly make use of certain commutator identities. Let $u,v,w$ be elements of a group.  We start by recalling the trivial identity (\ref{eq:collecting.1}), which was
\begin{equation}\label{eq:com.iden.0}
vu=uv[v,u],
\end{equation}
and note that repeated application of it implies that
\begin{displaymath}
\begin{array}{rcl}
uvw[w,uv] & = & wuv\\
          & = & uw[w,u]v\\
          & = & uwv[w,u][[w,u],v]\\
          & = & uvw[w,v][w,u][[w,u],v].
\end{array}
\end{displaymath}
Combined with another trivial identity, $[a,b]^{-1}=[b,a]$, this implies that
\begin{equation}\label{eq:com.iden.1}
[u,w][v,w]=[[w,u],v][uv,w]
\end{equation}
and
\begin{equation}\label{eq:com.iden.2}
[w,v][w,u]=[w,uv][v,[w,u]].
\end{equation}
\begin{lemma}\label{lem:higher.comms.different}
Let $x_1,\ldots,x_m,y_1,\ldots,y_n$ be symbols. Suppose that $\xi_1,\ldots,\xi_r$ are distinct formal commutators in the $x_i$ and that $\zeta_1,\ldots,\zeta_t$ are distinct formal commutators in the $y_i$. Suppose further that $\beta,\beta'$ are formal commutators in the $\xi_i$ and the $\zeta_i$ that have no components of the form $[\xi_i,\xi_j]$ or $[\zeta_i,\zeta_j]$. Now suppose that $\beta$ and $\beta'$ are equal when viewed as commutators in the $x_i$ and $y_i$. Then $\beta$ and $\beta'$ are equal as commutators in the $\xi_i$ and $\zeta_i$.
\end{lemma}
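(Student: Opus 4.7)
The plan is to show that the $\xi\zeta$-commutator $\beta$ can be uniquely reconstructed from its expansion as a commutator in the $x_i$ and $y_j$, from which the equality $\beta=\beta'$ follows at once. The argument is by induction on the total weight of $\beta$ viewed as a commutator in the $x_i,y_j$ (which equals that of $\beta'$ by hypothesis), and it rests on a single structural observation: under the no-$[\xi_i,\xi_j]$ hypothesis on $\beta$, every component $\gamma\sqsubset\beta$ that involves only $\xi$'s must in fact be a single leaf $\xi_i$, and symmetrically every component involving only $\zeta$'s must be a single leaf $\zeta_j$.

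To prove this observation, I would pick a pure-$\xi$ component $\gamma\sqsubset\beta$ of minimal $\xi\zeta$-weight that fails to be a single leaf. Then $\gamma=[\gamma_1,\gamma_2]$ for strictly smaller pure-$\xi$ components $\gamma_1,\gamma_2\sqsubset\beta$; by minimality each $\gamma_i$ is a single leaf $\xi_{j_i}$, so $\gamma=[\xi_{j_1},\xi_{j_2}]$, which is forbidden. The analogous argument handles pure-$\zeta$ components.

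Given the observation, the induction is essentially automatic. Let $T$ denote the common expansion of $\beta$ and $\beta'$ as a formal commutator in the $x_i,y_j$. If $T$ involves only the $x_i$'s then $\beta$ must be pure-$\xi$, hence a single leaf $\xi_i$ equal to $T$ as a formal commutator in the $x_i$; since the $\xi_i$ are distinct as formal commutators in the $x_i$, the index $i$ is determined by $T$, and the same reasoning applied to $\beta'$ forces $\beta=\beta'$. The case in which $T$ involves only the $y_j$'s is symmetric. Otherwise $T=[T_1,T_2]$ is mixed, so $\beta$ cannot be a single leaf (a leaf would expand to a monochromatic tree), and hence $\beta=[\beta_1,\beta_2]$ at the top level with $\beta_i$ expanding to $T_i$; identically $\beta'=[\beta'_1,\beta'_2]$ with $\beta'_i$ expanding to $T_i$. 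Each $\beta_i$ (respectively $\beta'_i$) inherits the no-forbidden-component condition from $\beta$ (respectively $\beta'$) and has strictly smaller expanded weight, so the inductive hypothesis gives $\beta_i=\beta'_i$, and therefore $\beta=\beta'$.

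I do not anticipate a serious technical obstacle: the statement is really a uniqueness statement for a contraction procedure on the tree $T$, and the hypothesis on forbidden components is exactly what is needed to collapse each pure-$\xi$ or pure-$\zeta$ subtree in the $\xi\zeta$-tree of $\beta$ to a single leaf, which is the only place ambiguity between $\beta$ and $\beta'$ could possibly arise.
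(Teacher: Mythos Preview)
Your argument is correct and rests on the same key observation as the paper's proof: under the no-$[\xi_i,\xi_j]$ and no-$[\zeta_i,\zeta_j]$ hypothesis, every pure-$\xi$ (respectively pure-$\zeta$) component of $\beta$ in the $\xi\zeta$-tree collapses to a single leaf, which is then pinned down by the distinctness of the $\xi_i$ (respectively $\zeta_j$).

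The organisation differs slightly. You run a top-down induction on the $xy$-weight, splitting at the root bracket whenever the expansion $T$ is bichromatic and invoking the observation only at the monochromatic base case. The paper instead works in one shot: it identifies, inside the $xy$-tree $T$, the maximal components $\eta_l$ involving only $x$'s and $\gamma_l$ involving only $y$'s, rewrites $T$ canonically as a commutator in these $\eta_l,\gamma_l$, and then notes that each $\eta_l$ must correspond to a single $\xi_i$ and each $\gamma_l$ to a single $\zeta_j$. Your induction is effectively carrying out this same identification layer by layer, so the two proofs have identical content; the paper's packaging is marginally shorter, while yours makes the recursive structure of the reconstruction more explicit.
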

\begin{proof}
Viewing $\beta$ as a commutator in the $x_i$ and $y_i$, write $\eta_1,\ldots,\eta_u$ for the list of components of $\beta$ that are maximal with respect to the property of having no $y_i$ as an argument, and write $\gamma_1,\ldots,\gamma_v$ for the list of components that are maximal with respect to the property of having no $x_i$ as an argument. Express $\beta$ as a commutator in the $\eta_i$ and the $\gamma_i$. Thus, for example, if
\[
\beta=[[[x_1,x_2],y_1],[x_1,x_2]]
\]
then $\eta_1=\eta_2=[x_1,x_2]$ and $\gamma_1=y_1$ and we would have
\[
\beta=[[\eta_1,\gamma_1],\eta_2].
\]
Note that the $\eta_i,\gamma_i$ and the form of $\beta$ in terms of the $\eta_i$ and $\gamma_i$ are uniquely determined by the form of $\beta$ in terms of the $x_i$ and $y_i$.

Viewing $\beta$ as a commutator in the $\xi_i$ and $\zeta_i$ once more, the only way that an $\eta_l$ can have arisen is as a commutator in the $\xi_i$. In fact, since $\beta$ contains no component of the form $[\xi_i,\xi_j]$, the component $\eta_l$ must have arisen as some $\xi_i$. Similarly, each component $\gamma_l$ must have arisen as some $\zeta_i$. Since the $\xi_i$ and $\zeta_i$ are distinct, these $\xi_i$ and $\zeta_i$ are uniquely determined, and so the form of $\beta$ as a commutator in the $\xi_i$ and $\zeta_i$ is uniquely determined by the $\eta_i$ and $\gamma_i$ and by the form of $\beta$ with respect to the $\eta_i$ and the $\gamma_i$.

This in turn was uniquely determined by the form of $\beta$ as a commutator in the $x_i$ and $y_i$, and so the proof is complete.
\end{proof}
\begin{lemma}\label{lem:comm.decomp}
Let $x_1,\ldots,x_m,y_1,\ldots,y_n$ be elements of a nilpotent group $\Gamma$. Then there is a finite list $\eta_1,\ldots,\eta_r$ of commutators in the $x_i$ and $y_j$ satisfying the following properties.
\begin{enumerate}
\renewcommand{\labelenumi}{(\roman{enumi})}
\item We have $[x_1\cdots x_m,y_1\cdots y_n]=\eta_1\cdots\eta_r$.
\item For each pair $x_i,y_{i'}$ there is some $\eta_j$ equal to $[x_i,y_{i'}]$.
\item Every $\eta_j$ has at least one $x_i$ and at least one $y_{i'}$ amongst its arguments.
\item The $\eta_j$ are all distinct as formal commutators in the $x_i,y_{i'}$.
\item No $\eta_j$ has a component of the form $[x_i,x_{i'}]$ or $[y_i,y_{i'}]$.
\item If $\eta_j$ is not of the form $[x_i,y_{i'}]$ then $\eta$ has weight greater than 2.
\end{enumerate}
\end{lemma}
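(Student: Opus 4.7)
The plan is to proceed by strong induction on $m + n$, using the commutator identities (\ref{eq:com.iden.1}) and (\ref{eq:com.iden.2}) to peel off factors one at a time. In the base case $m = n = 1$ I would simply take $\eta_1 = [x_1, y_1]$, and all six properties hold trivially.

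For the inductive step, assume without loss of generality that $m \ge 2$ (the case $m = 1$, $n \ge 2$ is handled symmetrically, peeling off $y_n$ via (\ref{eq:com.iden.2})). Rearranging (\ref{eq:com.iden.1}) as $[uv, w] = [v, [w, u]][u, w][v, w]$ and applying it with $u = x_1$, $v = x_2 \cdots x_m$, $w = y_1 \cdots y_n$ gives
\[
[x_1 \cdots x_m, y_1 \cdots y_n] = [x_2 \cdots x_m, [y_1 \cdots y_n, x_1]] \cdot [x_1, y_1 \cdots y_n] \cdot [x_2 \cdots x_m, y_1 \cdots y_n].
\]
The second and third factors have strictly smaller total number of arguments and so admit the required decompositions by the inductive hypothesis. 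The first factor is the main source of difficulty: I would handle it by first decomposing the inner commutator $[y_1 \cdots y_n, x_1]$ via a symmetric application of the lemma (valid since $n + 1 < m + n$), producing a product $\tilde\eta_1 \cdots \tilde\eta_t$ of formal commutators each containing $x_1$ together with at least one $y_j$, and then repeatedly applying (\ref{eq:com.iden.1}) and (\ref{eq:com.iden.2}) to the outer $[x_2 \cdots x_m, \tilde\eta_1 \cdots \tilde\eta_t]$ to distribute the commutator across both products. Since $\Gamma$ is nilpotent of step $s$, every commutator of weight exceeding $s$ is trivial and may be discarded, so the expansion terminates.

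Properties (i)--(iii) then follow directly from the construction: every pair $(x_i, y_{i'})$ produces a weight-$2$ term $[x_i, y_{i'}]$ in the base case of some recursion branch, and every $\eta_l$ inherits at least one $x$-argument and one $y$-argument from the factors that generate it. Property (v) is maintained because each application of (\ref{eq:com.iden.1}) or (\ref{eq:com.iden.2}) introduces only brackets of the form $[*, [*, *]]$ in which the inner bracket is itself a mixed-type commutator, so no component of the form $[x_i, x_{i'}]$ or $[y_j, y_{j'}]$ is ever produced. Property (vi) follows from (v) together with the observation that the weight-$2$ factors are generated in precisely the form $[x_i, y_{i'}]$ by the chosen identity.

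The main obstacle is property (iv), the distinctness of the $\eta_l$ as formal commutators: different branches of the recursion might \emph{a priori} produce the same formal commutator, and avoiding this calls for a careful accounting of the expansion tree. The key tool here is Lemma \ref{lem:higher.comms.different}: since the intermediate commutators $\tilde\eta_k$ obtained from the symmetric application of the lemma are themselves distinct, the outer commutators built on them via the further expansion remain distinct as formal commutators in the $x_i$ and $y_{j'}$. Combined with a careful bookkeeping of which subterms arise from which application of an identity at each inductive step, this should secure (iv).
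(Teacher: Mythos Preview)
Your overall shape is close to the paper's, but there is a real gap in the induction scheme. A single induction on $m+n$ does not close: when you reach $[x_2\cdots x_m,\tilde\eta_1\cdots\tilde\eta_t]$, the number of ``letters'' on the right is $t$, which can be much larger than $n$, so you cannot invoke the inductive hypothesis on $m+n$. Your fallback---``repeatedly apply the identities and discard commutators of weight $>s$''---is not a proof of the lemma for this new instance; it is precisely the statement you are trying to establish, now with different letters, and you would need to re-derive all six properties (especially (iv) and (v)) for that expansion from scratch.

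The paper repairs this by running a \emph{double} induction. Define $\theta(g)=\sup\{i:g\in\Gamma_i(\Gamma)\}$ and set $d=\min_i\theta(x_i)+\min_j\theta(y_j)$; induct first (downwards) on $d$, and only then on $m+n$. After peeling off one letter via (\ref{eq:com.iden.2}) (the paper removes $y_1$, symmetric to your removal of $x_1$), the analogue of your troublesome factor is $[\xi_1\cdots\xi_r,\,y_2\cdots y_n]$, where the $\xi_j$ come from decomposing $[x_1\cdots x_m,y_1]$. Each $\xi_j$ has at least one $x_i$ and $y_1$ amongst its arguments, so $\theta(\xi_j)\ge\min_i\theta(x_i)+\theta(y_1)$; hence the $d$-parameter for this new instance is strictly larger, and the \emph{outer} induction applies regardless of how large $r$ is. This is exactly the formalisation your appeal to nilpotency is gesturing at. With this in place, your treatment of (iv) via Lemma~\ref{lem:higher.comms.different} is the right idea and matches the paper's argument; the point is that the $\xi_j$ are distinct formal commutators in the $x_i$ alone, the outer decomposition gives commutators distinct in the $\xi_j,y_{j'}$ with no $[\xi_i,\xi_{i'}]$ or $[y_i,y_{i'}]$ components, and Lemma~\ref{lem:higher.comms.different} then lifts distinctness back to the level of the $x_i,y_j$.
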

\begin{proof}
For each element $g\in\Gamma$ define
\[
\theta(g):=\sup\{i:g\in\Gamma_i\}.
\]
Write $s$ for the nilpotency class of $\Gamma$ and note that if $\min_i\theta(x_i)+\min_j\theta(y_j)>s$ then $[x_1\cdots x_m,y_1\cdots y_n]$ and the $[x_i,y_j]$ are all equal to the identity element, and so the result is trivial. By induction, it is therefore sufficient to fix $s$ and $d\le s$; to assume that
\begin{equation}\label{eq:comm.decomp.1}
\min_i\theta(x_i)+\min_j\theta(y_j)=d;
\end{equation}
and to assume that the result holds for elements of $s$-step nilpotent groups satisfying
\[
\min_i\theta(x_i)+\min_j\theta(y_j)>d.
\]
Such a statement is trivial in the case $m+n\le2$, and so by another induction we may assume also that $m+n\ge3$ and that the result holds under condition (\ref{eq:comm.decomp.1}) for all smaller values of $m+n$.

The assumption that $m+n\ge3$ implies in particular that at least one of $m$ and $n$ is at least 2. We shall assume that $n\ge2$; the argument in the case $m\ge2$ is essentially identical and is left to the reader. Thanks to this assumption we may apply (\ref{eq:com.iden.2}) and conclude that
\begin{equation}\label{eq:comm.decomp.2}
[x_1\cdots x_m,y_1\cdots y_n]=[x_1\cdots x_m,y_2\cdots y_n]\,[x_1\cdots x_m,y_1]\,[[x_1\cdots x_m,y_1],y_2\cdots y_n].
\end{equation}
If the first induction hypothesis does not apply to $[x_1\cdots x_m,y_2\cdots y_n]$ then the second certainly does, and so we may assume that there are commutators $\eta_1,\ldots,\eta_q$ in $x_1,\ldots,x_m,y_2,\ldots,y_n$ such that the following conditions hold.
\begin{enumerate}
\item\label{eq:comm.decomp.3}We have $[x_1\cdots x_m,y_2\cdots y_n]=\eta_1\cdots\eta_q$.
\item\label{eq:comm.decomp.4}For each pair $x_i,y_{i'}$ with $i'\ge2$ there is some $\eta_j$ equal to $[x_i,y_{i'}]$.
\item\label{eq:comm.decomp.5}Every $\eta_j$ has at least one $x_i$ and at least one $y_{i'}$ with $i'\ge2$ amongst its arguments.
\item\label{eq:comm.decomp.6}The $\eta_j$ are all distinct as formal commutators in the $x_i,y_{i'}$.
\item\label{eq:comm.decomp.6.1}No $\eta_j$ has a component of the form $[x_i,x_{i'}]$ or $[y_i,y_{i'}]$.
\item\label{eq:comm.decomp.6.3}If $\eta_j$ is not of the form $[x_i,y_{i'}]$ then $\eta$ has weight greater than 2.
\end{enumerate}
Similarly, we have commutators $\xi_1,\ldots,\xi_r$ in the $x_i$ and $y_1$ such that
\begin{enumerate}\setcounter{enumi}{6}
\item\label{eq:comm.decomp.7}We have $[x_1\cdots x_m,y_1]=\xi_1\cdots\xi_r$.
\item\label{eq:comm.decomp.8}For each $x_i$ there is some $\xi_j$ equal to $[x_i,y_1]$.
\item\label{eq:comm.decomp.9}Every $\xi_j$ has $y_1$ and at least one $x_i$ amongst its arguments.
\item\label{eq:comm.decomp.10}The $\xi_j$ are all distinct as formal commutators in the $x_i,y_{i'}$.
\item\label{eq:comm.decomp.10.1}No $\xi_j$ has a component of the form $[x_i,x_{i'}]$.
\item\label{eq:comm.decomp.10.2}If $\xi_j$ is not of the form $[x_i,y_1]$ then $\eta$ has weight greater than 2.
\end{enumerate}
It follows from condition (\ref{eq:comm.decomp.7}) that we may write
\begin{equation}\label{eq:comm.decomp.11}
[[x_1\cdots x_m,y_1],y_2\cdots y_n]=[\xi_1\cdots\xi_r,y_2\cdots y_n],
\end{equation}
while condition (\ref{eq:comm.decomp.9}) implies that for evey $j$ we have 
\[
\theta(\xi_j)\ge\min_i\theta(x_i)+\theta(y_1).
\]
This implies in particular that
\[
\min_i\theta(\xi_i)+\min_{j\in[2,n]}\theta(y_j)>\min_i\theta(x_i)+\min_j\theta(y_j),
\]
and hence by (\ref{eq:comm.decomp.1}) that
\[
\min_i\theta(\xi_i)+\min_{j\in[2,n]}\theta(y_j)>d.
\]
The first induction hypothesis therefore applies, and so we may assume that there exist commutators $\zeta_1,\ldots,\zeta_t$ in the $\xi_i,y_j$ satisfying the following conditions.
\begin{enumerate}\setcounter{enumi}{12}
\item\label{eq:comm.decomp.12}We have $[\xi_1\cdots\xi_r,y_2\cdots y_n]=\zeta_1\cdots\zeta_t$.
\item\label{eq:comm.decomp.13}Every $\zeta_j$ has at least one $\xi_i$ and at least one $y_{i'}$ with $i'\ge2$ amongst its arguments.
\item\label{eq:comm.decomp.14}The $\zeta_j$ are all distinct as formal commutators in the $\xi_i,y_{i'}$.
\item\label{eq:comm.decomp.15}No $\zeta_j$ has a component of the form $[\xi_i,\xi_{i'}]$ or $[y_i,y_{i'}]$.
\end{enumerate}
It follows from (\ref{eq:comm.decomp.2}) and (\ref{eq:comm.decomp.11}) and conditions (\ref{eq:comm.decomp.3}), (\ref{eq:comm.decomp.7})  and (\ref{eq:comm.decomp.12}) that
\[
[x_1\cdots x_m,y_1\cdots y_n]=\eta_1\cdots\eta_q\xi_1\cdots\xi_r\zeta_1\cdots\zeta_t,
\]
and so the $\eta_i,\xi_i,\zeta_i$ satisfy condition (i) of the lemma. By conditions (\ref{eq:comm.decomp.4}) and (\ref{eq:comm.decomp.8}) they satisfy condition (ii) of the lemma, and by conditions (\ref{eq:comm.decomp.5}), (\ref{eq:comm.decomp.9}) and (\ref{eq:comm.decomp.13}) they satisfy condition (iii). It follows from conditions (\ref{eq:comm.decomp.6.1}), (\ref{eq:comm.decomp.10.1}), (\ref{eq:comm.decomp.10.2}) and (\ref{eq:comm.decomp.15}) that they satisfy condition (v) of the lemma, and from conditions (\ref{eq:comm.decomp.6.3}), (\ref{eq:comm.decomp.10.2}) and (\ref{eq:comm.decomp.13}) that they satisfy condition (vi).

It remains to see that the $\eta_i,\xi_i,\zeta_i$ satisfy condition (iv) of the lemma. No two $\eta_i$ can be equal as formal commutators in the $x_i,y_j$ thanks to condition (\ref{eq:comm.decomp.6}), and the same is true of the $\xi_i$ by condition (\ref{eq:comm.decomp.10}). It follows from Lemma~\ref{lem:higher.comms.different} and from conditions (\ref{eq:comm.decomp.14}) and (\ref{eq:comm.decomp.15}) that the same is also true of the $\zeta_i$.

It follows from conditions (\ref{eq:comm.decomp.8}) and (\ref{eq:comm.decomp.13}) that each $\xi_i$ and each $\zeta_j$ has $y_1$ as an argument, but none of the $\eta_i$ has $y_1$ as an argument, and so no $\eta_i$ can be equal to a $\xi_j$ or a $\zeta_j$ as a formal commutator in the $x_i,y_j$. Moreover, $y_1$ is the only of the $y_j$ to feature as an argument of a $\xi_i$, whereas it follows from condition (\ref{eq:comm.decomp.13}) that each $\zeta_i$ has at least one $y_j$ other than $y_1$ as an argument, and so no $\zeta_i$ can equal be equal to a $\xi_j$ as a formal commutator in the $x_i,y_j$. Thus $\eta_i,\xi_i,\zeta_i$ satisfy condition (iv) of the lemma, and so the proof is complete.
\end{proof}
\begin{proof}[Proof of Proposition \ref{prop:com.components}]
Abusing notation slightly, we shall abbreviate
\[
\alpha=\alpha(\textstyle\prod_{i=1}^{m_1}x^{(1)}_i,\ldots,\prod_{i=1}^{m_r}x^{(r)}_i).
\]
If $\alpha$ is of weight 1 then the result is trivial, and so we may assume that $r>1$ and, by induction, that the result holds for all commutator forms of weight less than $r$. Since $r>1$ we may write
\[
\alpha=\left[\alpha_1(\textstyle\prod_{i=1}^{m_1}x^{(1)}_i,\ldots,\prod_{i=1}^{m_d}x^{(d)}_i),\alpha_2(\prod_{i=1}^{m_{d+1}}x^{(d+1)}_i,\ldots,\prod_{i=1}^{m_r}x^{(r)}_i)\right]
\]
for commutator forms $\alpha_1$ and $\alpha_2$ of weight $d$ and $d':=r-d$, respectively, with
\begin{equation}\label{eq:com.components.1}
1\le d,d'<r.
\end{equation}
For notational convenience let us relabel $x^{(d+1)}_i,\ldots,x^{(r)}_i$ as $y^{(1)}_i,\ldots,y^{(d')}_i$, and $m_{d+1},\ldots,m_r$ as $n_1,\ldots,n_{d'}$; thus
\begin{equation}\label{eq:com.components.1.5}
\alpha=\left[\alpha_1(\textstyle\prod_{i=1}^{m_1}x^{(1)}_i,\ldots,\prod_{i=1}^{m_d}x^{(d)}_i),\alpha_2(\prod_{i=1}^{n_1}y^{(1)}_i,\ldots,\prod_{i=1}^{n_{d'}}y^{(d')}_i)\right].
\end{equation}
By (\ref{eq:com.components.1}) and the induction hypothesis there exist commutators $\xi_1,\ldots,\xi_k$ in the $x^{(j)}_i$ and commutators $\zeta_1,\ldots,\zeta_q$ in the $y^{(j)}_i$ satisfying the following conditions.
\begin{enumerate}
\item\label{eq:com.components.2} We have $\alpha_1(\textstyle\prod_{i=1}^{m_1}x^{(1)}_i,\ldots,\prod_{i=1}^{m_d}x^{(d)}_i)=\xi_1\cdots\xi_k$.
\item\label{eq:com.components.3} We have $\alpha_2(\textstyle\prod_{i=1}^{n_1}y^{(1)}_i,\ldots,\prod_{i=1}^{n_{d'}}y^{(d')}_i)=\zeta_1\cdots\zeta_q$.
\item\label{eq:com.components.4} For each tuple $(x^{(1)}_{i_1},\ldots,x^{(r)}_{i_d})$ there is some $\xi_l$ equal to $\alpha_1(x^{(1)}_{i_1},\ldots,x^{(d)}_{i_d})$.
\item\label{eq:com.components.5} For each tuple $(y^{(1)}_{i_1},\ldots,y^{(d')}_{i_{d'}})$ there is some $\zeta_l$ equal to $\alpha_2(y^{(1)}_{i_1},\ldots,y^{(d')}_{i_{d'}})$.
\item\label{eq:com.components.6} For every $l\in[1,k]$ and every $j\in[1,d]$ there is at least one $i\in[1,m_j]$ with $x^{(j)}_i\sqsubset\xi_l$.
\item\label{eq:com.components.7} For every $l\in[1,q]$ and every $j\in[1,d']$ there is at least one $i\in[1,n_j]$ with $y^{(j)}_i\sqsubset\zeta_l$.
\item\label{eq:com.components.8} The $\xi_l$ are all distinct as formal commutators in the $x^{(j)}_i$.
\item\label{eq:com.components.9} The $\zeta_l$ are all distinct as formal commutators in the $y^{(j)}_i$.
\end{enumerate}
By (\ref{eq:com.components.1.5}) and conditions (\ref{eq:com.components.2}) and (\ref{eq:com.components.3}) we have
\[
\alpha=[\xi_1\cdots\xi_k,\zeta_1\cdots\zeta_q],
\]
and so by Lemma~\ref{lem:comm.decomp} we may assume that there exist commutators $\eta_1,\ldots,\eta_t$ in the $\xi_i$ and $\zeta_i$ satisfying the following conditions.

\begin{enumerate}\setcounter{enumi}{8}
\item\label{eq:com.components.10} We have $\alpha=\eta_1\cdots\eta_t$.
\item\label{eq:com.components.11} For each pair $\xi_i,\zeta_{i'}$ there is some $\eta_j$ equal to $[\xi_i,\zeta_{i'}]$.
\item\label{eq:com.components.12} Every $\eta_j$ has at least one $\xi_i$ and at least one $\zeta_{i'}$ amongst its arguments.
\item\label{eq:com.components.13} The $\eta_j$ are all distinct as formal commutators in the $\xi_i,\zeta_{i'}$.
\item\label{eq:com.components.14} No $\eta_j$ has a component of the form $[\xi_i,\xi_{i'}]$ or $[\zeta_i,\zeta_{i'}]$.
\end{enumerate}
Lemma~\ref{lem:comm.decomp} also implies that every $\eta_j$ that is not of the form $[\xi_i,\zeta_{i'}]$ has weight greater than 2, and so, viewing the $\eta_i$ now as commutators in the $x^{(j)}_i$ and $y^{(j)}_i$, it follows from conditions (\ref{eq:com.components.6}), (\ref{eq:com.components.7}) and (\ref{eq:com.components.12}) that they satisfy condition (v) of the proposition. Moreover, condition (\ref{eq:com.components.10}) implies that they satisfy condition (i) of the proposition. Condition (ii) follows from conditions (\ref{eq:com.components.4}), (\ref{eq:com.components.5}) and (\ref{eq:com.components.11}). Condition (iii) follows from conditions (\ref{eq:com.components.6}), (\ref{eq:com.components.7}) and (\ref{eq:com.components.12}). Finally, in light of conditions (\ref{eq:com.components.8}), (\ref{eq:com.components.9}), (\ref{eq:com.components.13}) and (\ref{eq:com.components.14}), condition (iv) follows from Lemma~\ref{lem:higher.comms.different}.
\end{proof}
\begin{corollary}\label{cor:com.powers}
Let $\alpha$ be a commutator form of weight $r$, let $x_1,\ldots,x_r$ be elements of an $s$-step nilpotent group $\Gamma$, and let $(l_1,\ldots,l_r)$ be a vector of positive integers. Let $\xi_1,\ldots,\xi_k$ be the complete list, in ascending order with respect to $\prec$, of commutators in the $x_i$ of weight greater than $r$. Then $\alpha(x_1^{l_1},\ldots,x_r^{l_r})$ is equal in $\Gamma$ to a product, in some order, of precisely $\prod_il_i$ copies of $\alpha(x_1,\ldots,x_r)$ and at most $l^{\chi(\xi_j)}$ copies of each $\xi_j$.
\end{corollary}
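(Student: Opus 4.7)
The natural approach is to apply Proposition \ref{prop:com.components} with the specialisation $m_j := l_j$ and $x^{(j)}_i := x_j$ for every $j \in [1,r]$ and every $i \in [1, l_j]$. Under this choice we have $\prod_{i=1}^{m_j} x^{(j)}_i = x_j^{l_j}$, and so the proposition produces formal commutators $\eta_1, \ldots, \eta_t$ in the formal letters $x^{(j)}_i$ such that $\alpha(x_1^{l_1}, \ldots, x_r^{l_r}) = \eta_1 \cdots \eta_t$ in $\Gamma$, subject to the bookkeeping conditions (ii)--(v).

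Introduce the weight-preserving projection $\Pi$ that sends a formal commutator in the $x^{(j)}_i$ to a formal commutator in the $x_j$ by replacing every letter $x^{(j)}_i$ by $x_j$. Condition (ii) of Proposition \ref{prop:com.components} yields, for each tuple $(i_1, \ldots, i_r) \in [1, l_1] \times \cdots \times [1, l_r]$, some $\eta_l$ equal to $\alpha(x^{(1)}_{i_1}, \ldots, x^{(r)}_{i_r})$; condition (iv) makes these pairwise distinct; and each of them is sent by $\Pi$ to $\alpha(x_1, \ldots, x_r)$. Condition (v) then guarantees that every remaining $\eta_l$ has formal weight strictly greater than $r$, and hence is sent by $\Pi$ to a formal commutator in the $x_j$ of weight greater than $r$, that is, to one of the $\xi_j$. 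This already exhibits exactly $\prod_i l_i$ factors equal to $\alpha(x_1, \ldots, x_r)$ in the product, with the rest distributed amongst the $\xi_j$.

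The only remaining task, which is really the one point that needs some thought, is to bound for each $j$ the number of $\eta_l$ mapped by $\Pi$ to $\xi_j$. Writing $\chi(\xi_j) = (\chi_1, \ldots, \chi_r)$, any formal $\Pi$-preimage of $\xi_j$ must share the tree shape of $\xi_j$, each of the $\chi_a$ leaves labelled $x_a$ being lifted independently to one of the $l_a$ letters $x^{(a)}_1, \ldots, x^{(a)}_{l_a}$. Consequently $\xi_j$ has at most $\prod_{a=1}^{r} l_a^{\chi_a} = l^{\chi(\xi_j)}$ preimages under $\Pi$. Since by condition (iv) of Proposition \ref{prop:com.components} the $\eta_l$ are pairwise distinct as formal commutators in the $x^{(j)}_i$, at most $l^{\chi(\xi_j)}$ of them can be $\Pi$-preimages of $\xi_j$, which finishes the proof.
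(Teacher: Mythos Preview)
Your proof is correct and follows essentially the same approach as the paper: apply Proposition~\ref{prop:com.components} with each $x_j$ repeated $l_j$ times, then use conditions (ii) and (iv) to pin down exactly $\prod_i l_i$ copies of $\alpha(x_1,\ldots,x_r)$, condition (v) to push the remaining factors to weight greater than $r$, and condition (iv) again for the multiplicity bound on each $\xi_j$. Your explicit use of the projection $\Pi$ and the preimage-counting argument makes the last step clearer than the paper's one-line appeal to condition (iv), but the underlying idea is identical.
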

\begin{proof}
The fact that $\alpha(x_1^{l_1},\ldots,x_r^{l_r})$ may be written as a product of commutators that includes at least $\prod_il_i$ copies of $\alpha(x_1,\ldots,x_r)$ follows from conditions (i) and (ii) of Proposition~\ref{prop:com.components}, and the fact that we may take the other commutators in this product to have weight greater than $r$ follows from condition (v) of that proposition. The bound on the number of copies of each $\xi_j$ and the fact that no further copies of $\alpha(x_1,\ldots,x_r)$ are required follow from condition (iv).
\end{proof}
\begin{proof}[Proof of Proposition~\ref{prop:com.powers}]
This follows from applying Corollary~\ref{cor:basic.collected} to the product from Corollary~\ref{cor:com.powers}.
\end{proof}
\section{Rough equivalence of progressions}\label{sec:rough.equiv.prog}
The purpose of this appendix is to prove the following result.
\begin{prop}\label{prop:prim.prog.to.nilp.prog}
Let $G$ be an $s$-step nilpotent group and suppose that $x_1,\ldots,x_r\in G$ and $L_1,\ldots,L_r\in\N$. Then
\[
P_\textup{ord}(x_1,\ldots,x_r;L)\subset P^*(x_1,\ldots,x_r;L)\subset P(x_1,\ldots,x_r;L)\subset P_\textup{ord}(x_1,\ldots,x_r;L)^{r^{O_s(1)}}
\]
\end{prop}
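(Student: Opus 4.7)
The first inclusion $P_\textup{ord}\subset P^*$ is immediate from the definitions: a word $x_1^{l_1}\cdots x_r^{l_r}$ with $|l_i|\le L_i$ contains each $x_i^{\pm 1}$ exactly $|l_i|\le L_i$ times. The inclusion $P^*\subset P$ is precisely Corollary~\ref{cor:nilprog.in.nilp.prog}.

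For the substantive inclusion $P\subset P_\textup{ord}^{r^{O_s(1)}}$, the number of basic commutators of weight at most $s$ in $r$ letters is $r^{O_s(1)}$, so it suffices to show that each power $c^l$ of an individual basic commutator $c$ of weight $w$ with $|l|\le L^{\chi(c)}$ already lies in $P_\textup{ord}^{r^{O_s(1)}}$. The plan is to prove this by downward induction on $w$, running from $w=s$ down to $w=1$; the case $w=1$ is immediate, since then $c=x_j$ for some $j$ and $c^l\in P_\textup{ord}$. Fix a commutator form $\alpha$ with $c=\alpha(x_{i_1},\ldots,x_{i_w})$. The key auxiliary observation is that whenever $|a_k|\le L_{i_k}$ the fully expanded word representation of $\alpha(x_{i_1}^{a_1},\ldots,x_{i_w}^{a_w})$ obtained by repeated application of $[u,v]=u^{-1}v^{-1}uv$ has length $O_s(1)$, each letter of which is of the form $x_{i_j}^{\pm a_j}\in P_\textup{ord}$; thus $\alpha(x_{i_1}^{a_1},\ldots,x_{i_w}^{a_w})\in P_\textup{ord}^{O_s(1)}$.

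To bring $c^l$ into this shape, expand $l$ in mixed radix as $l=\sum_{k=1}^{w}b_k L_{i_1}\cdots L_{i_{k-1}}$ with $|b_k|<L_{i_k}$. Since powers of $c$ commute we have $c^l=\prod_{k=1}^{w}c^{b_k L_{i_1}\cdots L_{i_{k-1}}}$, and Proposition~\ref{prop:com.powers}, applied with $(l_1,\ldots,l_w)=(L_{i_1},\ldots,L_{i_{k-1}},b_k,1,\ldots,1)$, writes each factor as
\[
c^{b_k L_{i_1}\cdots L_{i_{k-1}}}=\alpha(x_{i_1}^{L_{i_1}},\ldots,x_{i_k}^{b_k},\ldots,x_{i_w})\cdot R_k,
\]
where $R_k$ is a product of powers $\zeta_j^{m_j}$ of basic commutators $\zeta_j$ of weight strictly greater than $w$ with $|m_j|\le L^{\chi(\zeta_j)}$. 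The first factor lies in $P_\textup{ord}^{O_s(1)}$ by the auxiliary observation, and each $\zeta_j^{m_j}$ lies in $P_\textup{ord}^{r^{O_s(1)}}$ by the inductive hypothesis. In the base case $w=s$ all correction commutators $\zeta_j$ are of weight exceeding $s$ and hence trivial, so $R_k=1$.

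The main technical obstacle I anticipate is the bookkeeping of commutators that arise when one commutes the $R_k$'s past the later $\alpha$-factors and past one another in order to produce a clean product of elements of $P_\textup{ord}$. Each such swap introduces further commutators of strictly higher weight lying in $\Gamma_{\ge w+2}(G)$, which can be absorbed into the inductive hypothesis. Negative values of $l$ reduce to the positive case by replacing $c$ with $c^{-1}$. Summing the counts over the $w\le s$ factors of the mixed-radix decomposition and then over the $r^{O_s(1)}$ basic commutators appearing in an arbitrary element of $P$ gives the claimed bound of $r^{O_s(1)}$.
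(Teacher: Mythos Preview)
Your approach is essentially the paper's: isolate a lemma of the form ``$\alpha(x_{i_1},\ldots,x_{i_w})^m\in P_\textup{ord}^{O_s(1)}$ whenever $|m|\le L^{\chi(\alpha)}$'', prove it by downward induction on the weight $w$ using Proposition~\ref{prop:com.powers}, and then multiply over the $r^{O_s(1)}$ basic commutators appearing in $P$. The paper packages this as Lemma~\ref{lem:com.powers} and, in place of your mixed-radix expansion, invokes \cite[Lemma~4.3]{tor.free.nilp} to write $m$ as a sum of $2^{w-1}$ full products $l_1^{(j)}\cdots l_w^{(j)}$ with $0\le l_i^{(j)}\le L_i$; your radix decomposition is a perfectly good and slightly more self-contained substitute.

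There is, however, one genuine slip in your induction. You state the hypothesis for \emph{basic} commutators only, but the correction factors $\zeta_j$ coming out of Proposition~\ref{prop:com.powers} are basic in the auxiliary alphabet $\{\alpha(x_{i_1},\ldots,x_{i_w}),\xi_1,\ldots,\xi_k\}$, not in the original alphabet $x_1,\ldots,x_r$; viewed as commutators in the $x_i$ they have weight $>w$ but need not be basic. So your inductive hypothesis does not literally apply to $\zeta_j^{m_j}$. The fix is exactly what the paper does in Lemma~\ref{lem:com.powers}: state and prove the claim for an \emph{arbitrary} commutator form of weight $w$, with the base case $w>s$ being trivial. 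Once phrased that way the induction closes with no further change.

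Finally, your anticipated ``bookkeeping obstacle'' of commuting the $R_k$ past later factors is not an obstacle at all: you only need $c^l\in P_\textup{ord}^{O_s(1)}$, and you already have $c^l$ written as a product of $O_s(1)$ factors each in $P_\textup{ord}^{O_s(1)}$, so you just concatenate. No rearrangement, and hence no higher-weight error terms from rearrangement, is required.
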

To facilitate the proof we introduce some notation. Given group elements $x_1,\ldots,x_k$ and a vector of positive integers $L=(L_1,\ldots,L_k)$ we define
\[
B(x_1,\ldots,x_k;L):=\bigcup_{i=1}^k\{l_ix_i:|l_i|\le L_i\}.
\]
We note the trivial inclusion
\begin{equation}\label{eq:prim.in.b^r}
B(x_1,\ldots,x_k;L)\subset P_\textup{ord}(x_1,\ldots,x_k;L),
\end{equation}
and that the role of $B(x_1,\ldots,x_k;L)$ in our argument will be comparable to that of the set $\mathfrak{b}$ appearing in the proof of \cite[Proposition 4.4]{tor.free.nilp}.
\begin{lemma}\label{lem:com.powers}
Let $x_1,\ldots,x_r$ be elements of an $s$-step nilpotent group and let $L=(L_1,\ldots,L_r)$ be a vector of positive integers. Let $\alpha$ be a commutator form of weight $r$. Then for every integer $m$ satisfying $|m|\le L_1\cdots L_r$ we have
\[
\alpha(x_1,\ldots,x_r)^m\in B(x_1,\ldots,x_r;L)^{O_s(1)}.
\]
\end{lemma}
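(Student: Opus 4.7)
My plan is to proceed by downward induction on the weight $r$ of $\alpha$. Since $\alpha(x_1,\ldots,x_r)\in\Gamma_r(G)$, the statement is trivially true for $r>s$, which furnishes the base case. For the inductive step I fix $r\le s$ and assume the lemma for all commutator forms of strictly greater weight. A standard mixed-radix argument---set $q_r$ to be the nearest integer to $m/(L_1\cdots L_{r-1})$, so that $|q_r|\le L_r$, and recurse on the smaller integer $m-q_rL_1\cdots L_{r-1}$---produces integers $q_1,\ldots,q_r$ with $|q_i|\le L_i$ such that
\[
m \;=\; q_1 + q_2L_1 + q_3L_1L_2 + \cdots + q_rL_1\cdots L_{r-1}.
\]

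Since all powers of the single element $\alpha(x_1,\ldots,x_r)$ commute with each other, this decomposition factorises $\alpha(x_1,\ldots,x_r)^m$ as $\prod_{i=1}^{r}\alpha(x_1,\ldots,x_r)^{q_iL_1\cdots L_{i-1}}$, and I would handle each of the $r\le s$ factors separately. For fixed $i$, applying Proposition~\ref{prop:com.powers} with exponent vector $(L_1,\ldots,L_{i-1},q_i,1,\ldots,1)$ (every component of which has absolute value at most the corresponding $L_j$) and rearranging gives
\[
\alpha(x_1,\ldots,x_r)^{q_iL_1\cdots L_{i-1}} \;=\; \alpha(x_1^{L_1},\ldots,x_{i-1}^{L_{i-1}},x_i^{q_i},x_{i+1},\ldots,x_r)\cdot\prod_{j}\zeta_{i,j}^{-m_{i,j}},
\]
in which each $\zeta_{i,j}$ is a commutator in the $x_k$ of weight strictly greater than $r$ and $|m_{i,j}|\le L^{\chi(\zeta_{i,j})}$. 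Taking the product over $i$ then writes $\alpha(x_1,\ldots,x_r)^m$ as an ordered product of ``main'' factors of the form $\alpha(x_1^{L_1},\ldots,x_i^{q_i},x_{i+1},\ldots,x_r)$ and ``correction'' factors of the form $\zeta_{i,j}^{-m_{i,j}}$.

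Each main factor is a weight-$r$ commutator bracket applied to arguments lying in $B(x_1,\ldots,x_r;L)$, and expanding the bracket recursively via $[a,b]=a^{-1}b^{-1}ab$ realises it as a word of length at most $O(4^r)=O_s(1)$ in $B$ and its inverses; since $B$ is symmetric, this places it in $B^{O_s(1)}$. For each correction factor I would write $\zeta_{i,j}=\gamma(x_{i_1},\ldots,x_{i_k})$ for the corresponding commutator form $\gamma$ of weight $k>r$ and indices $i_a\in[1,r]$; then $L^{\chi(\zeta_{i,j})}=\prod_{a}L_{i_a}$, so the bound on $|m_{i,j}|$ is exactly what is needed to apply the inductive hypothesis (in the greater weight $k$) to $\gamma$ with lengths $(L_{i_1},\ldots,L_{i_k})$, yielding
\[
\zeta_{i,j}^{m_{i,j}}\in B(x_{i_1},\ldots,x_{i_k};(L_{i_1},\ldots,L_{i_k}))^{O_s(1)}\subset B(x_1,\ldots,x_r;L)^{O_s(1)}.
\]

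The only subtle point is the bookkeeping of constants: I need the final bound to be $O_s(1)$, uniform in the $L_i$ and in $r$. This works because the downward induction terminates after at most $s-r\le s$ steps, and at each level both the number of factors indexed by $i$ (namely $r\le s$) and the number of correction commutators $\zeta_{i,j}$ (bounded by the total number of formal commutators of weight at most $s$ in $r\le s$ variables, a function of $s$ alone) remain controlled. Hence the total number of $B$-elements used stays $O_s(1)$, as desired.
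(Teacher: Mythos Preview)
Your argument is essentially correct and follows the same skeleton as the paper's proof: downward induction on $r$, reduction to Proposition~\ref{prop:com.powers}, then handling the ``main'' commutator factors directly and the higher-weight ``correction'' factors by the inductive hypothesis. The one genuine difference is the decomposition of $m$. The paper appeals to \cite[Lemma~4.3]{tor.free.nilp} to write a non-negative $m\le L_1\cdots L_r$ as a sum of $2^{r-1}$ products $l_1^{(j)}\cdots l_r^{(j)}$ with $0\le l_i^{(j)}\le L_i$, and then applies Proposition~\ref{prop:com.powers} with the exponent vector $(l_1^{(j)},\ldots,l_r^{(j)})$ for each $j$. Your mixed-radix decomposition is more elementary and self-contained, gives only $r\le s$ summands rather than $2^{r-1}$, and avoids the external reference; the paper's decomposition, on the other hand, produces non-negative exponents throughout, so Proposition~\ref{prop:com.powers} applies verbatim.

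That last point is the one small slip in your write-up: Proposition~\ref{prop:com.powers} is stated for a vector of \emph{positive} integers, but your $q_i$ may be zero or negative. This is harmless --- if $q_i=0$ the corresponding factor is trivial, and if $q_i<0$ you may replace the factor $\alpha(x_1,\ldots,x_r)^{q_iL_1\cdots L_{i-1}}$ by the inverse of $\alpha(x_1,\ldots,x_r)^{|q_i|L_1\cdots L_{i-1}}$ and use the symmetry of $B$ --- but you should say so explicitly rather than asserting that ``every component has absolute value at most the corresponding $L_j$'' suffices.
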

\begin{proof}
For $r>s$ the commutator $\alpha(x_1,\ldots,x_r)$ is equal to the identity and therefore the lemma holds trivially, and so we may assume that
\begin{equation}\label{eq:com.powers.1}
r\le s.
\end{equation}
We shall in fact show that
\begin{equation}\label{eq:com.powers.0}
\alpha(x_1,\ldots,x_r)^m\in B(x_1,\ldots,x_r;L)^{O_{r,s}(1)};
\end{equation}
(\ref{eq:com.powers.1}) shows that this is sufficient to imply the lemma.

We may assume by induction that (\ref{eq:com.powers.0}) holds holds for all commutators of weight greater than $r$. By symmetry of $B(x_1,\ldots,x_r;L)$ we may assume that $m\ge0$, and it then follows directly from \cite[Lemma 4.3]{tor.free.nilp} that there are non-negative integers
\begin{equation}\label{eq:com.powers.1.5}
l_i^{(j)}\le L_i
\end{equation}
for $i=1,\ldots,r$ and $j=1,\ldots,2^{r-1}$ such that
\[
m=(l_1^{(1)}\cdots l_r^{(1)})+\ldots+(l_1^{(2^{r-1})}\cdots l_r^{(2^{r-1})}).
\]
Hence
\begin{equation}\label{eq:com.powers.2}
\alpha(x_1,\ldots,x_r)^m=\alpha(x_1,\ldots,x_r)^{l_1^{(1)}\cdots l_r^{(1)}}\cdots\alpha(x_1,\ldots,x_r)^{l_1^{(2^{r-1})}\cdots l_r^{(2^{r-1})}}.
\end{equation}
Let $\zeta_1,\ldots,\zeta_t$ be as defined in the statement of Proposition~\ref{prop:com.powers}, and note that
\begin{equation}\label{eq:com.powers.3}
t\ll_{r,s}1.
\end{equation}
It then follows from Proposition~\ref{prop:com.powers} that there exist, for $i=1,\ldots,t$ and $j=1,\ldots,2^{r-1}$, integers $m_i^{(j)}$ with
\begin{equation}\label{eq:com.powers.4}
|m_i^{(j)}|\le(l^{(j)})^{\chi(\eta_i)}
\end{equation}
such that
\[
\alpha(x_1,\ldots,x_r)^{l_1^{(j)}\cdots l_r^{(j)}}=\alpha(x_1^{l_1^{(j)}},\ldots,x_r^{l_r^{(j)}})\zeta_t^{m_t^{(j)}}\cdots\zeta_1^{m_1^{(j)}}.
\]
Substituting this into (\ref{eq:com.powers.2}) we may conclude that
\begin{equation}\label{eq:com.powers.5}
\alpha(x_1,\ldots,x_r)^m=\prod_{j=1}^{2^{r-1}}\alpha(x_1^{l_1^{(j)}},\ldots,x_r^{l_r^{(j)}})\zeta_t^{m_t^{(j)}}\cdots\zeta_1^{m_1^{(j)}}.
\end{equation}
Now (\ref{eq:com.powers.1.5}) implies that
\begin{equation}\label{eq:com.powers.6}
\alpha(x_1^{l_1^{(j)}},\ldots,x_r^{l_r^{(j)}})\in B(x_1,\ldots,x_r;L)^{O_r(1)},
\end{equation}
and by (\ref{eq:com.powers.4}) and induction we have
\begin{equation}\label{eq:com.powers.7}
\zeta_i^{m_i^{(j)}}\in B(x_1,\ldots,x_r;L)^{O_{r,s}(1)}.
\end{equation}
A combination of (\ref{eq:com.powers.3}), (\ref{eq:com.powers.5}), (\ref{eq:com.powers.6}) and (\ref{eq:com.powers.7}) gives (\ref{eq:com.powers.0}), as claimed.
\end{proof}
\begin{proof}[Proof of Proposition \ref{prop:prim.prog.to.nilp.prog}]
The first inclusion is trivial and the second is given by Corollary~\ref{cor:nilprog.in.nilp.prog}, and so it remains only to prove the third inclusion. Write $c_1,\ldots,c_t$ for the list of basic commutators in the $x_i$. The number of commutator shapes of weight at most $s$ is clearly finite, and there are at most $r^s$ commutators in $x_1,\ldots,x_r$ of any such shape, and so in particular we have
\begin{equation}\label{eq:pf.prim.prog.to.nilp.prog.0}
t\le r^{O_s(1)}.
\end{equation}
It is clear from the definition of $B$ that
\begin{equation}\label{eq:pf.prim.prog.to.nilp.prog.1}
B(x_{j_1},\ldots,x_{j_k};(L_{j_1},\ldots,L_{j_k}))\subset B(x_1,\ldots,x_r;L),
\end{equation}
Moreover, for a given basic commutator $c_i=\alpha(x_{j_1},\ldots,x_{j_k})$ it follows from Lemma~\ref{lem:com.powers} that whenever $|m_i|\le L^{\chi(c_i)}$ we have
\[
c_i^{m_i}\in B(x_{j_1},\ldots,x_{j_k};(L_{j_1},\ldots,L_{j_k}))^{O_s(1)},
\]
which by (\ref{eq:prim.in.b^r}) and (\ref{eq:pf.prim.prog.to.nilp.prog.1}) implies that
\[
c_i^{m_i}\in P_\textup{ord}(x_1,\ldots,x_r;L)^{O_s(1)}.
\]
However, the elements of $P(x_1,\ldots,x_k;L)$ are precisely those of the form
\[
c_1^{m_i}\cdots c_t^{m_t}
\]
with $|m_i|\le L^{\chi(c_i)}$, and so (\ref{eq:pf.prim.prog.to.nilp.prog.0}) implies the result.
\end{proof}
\section{Generating sets of $p$-groups}\label{sec:p.groups}
In this appendix we derive the following basic properties of $p$-groups.
\begin{lemma}\label{lem:ab.subgroup.rank}
If $\Gamma$ is an abelian $p$-group and $\Gamma'$ is a subgroup of $\Gamma$ then the rank of $\Gamma'$ is at most the rank of $\Gamma$.
\end{lemma}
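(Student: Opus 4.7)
The plan is to pass through the structure theorem for finitely generated abelian groups and use the elementary $p$-torsion invariant. Concretely, I would first observe that for an abelian $p$-group $\Gamma$ the rank (minimum size of a generating set) coincides with the $\mathbb{F}_p$-dimension of the $p$-torsion subgroup $\Gamma[p]:=\{g\in\Gamma:pg=0\}$. This is immediate from the structure theorem: if $\Gamma\cong\bigoplus_{i=1}^{r}\Z/p^{a_i}\Z$ with $r$ minimal, then $\Gamma[p]\cong(\Z/p\Z)^r$ as an $\mathbb{F}_p$-vector space, so $\dim_{\mathbb{F}_p}\Gamma[p]=r=\rank\Gamma$. (Equivalently one could work with the Frattini quotient $\Gamma/p\Gamma$ and invoke a Burnside-type basis argument, as in Lemma~\ref{lem:burnside.basis}, which is presumably proved elsewhere in the appendix.)

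Next I would use that $p$-torsion is manifestly well-behaved under taking subgroups: if $\Gamma'\leq\Gamma$ then $\Gamma'[p]=\Gamma'\cap\Gamma[p]$ is an $\mathbb{F}_p$-subspace of $\Gamma[p]$, hence $\dim_{\mathbb{F}_p}\Gamma'[p]\le\dim_{\mathbb{F}_p}\Gamma[p]$. Combining this with the identification of the previous paragraph applied both to $\Gamma$ and to $\Gamma'$ (note that $\Gamma'$ is itself a finitely generated abelian $p$-group since $\Gamma$ is, so the structure theorem applies to it too), one obtains
\[
\rank\Gamma'=\dim_{\mathbb{F}_p}\Gamma'[p]\le\dim_{\mathbb{F}_p}\Gamma[p]=\rank\Gamma,
\]
which is precisely the conclusion.

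The only mildly non-trivial ingredient is the identification $\rank\Gamma=\dim_{\mathbb{F}_p}\Gamma[p]$, and that is a direct consequence of the structure theorem; there is no real obstacle. One should be slightly careful about the \emph{definition} of rank being used in the paper (minimum number of generators versus number of cyclic summands in a primary decomposition), but for finitely generated abelian $p$-groups these two notions coincide, and in the infinite-rank case the statement is vacuous or follows by the same argument applied to $\Gamma[p]$ directly as an $\mathbb{F}_p$-vector space.
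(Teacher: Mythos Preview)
Your proof is correct and takes a genuinely different route from the paper's. The paper argues by induction on $r=\rank\Gamma$: after replacing $\Gamma$ by a suitable $p^m\cdot\Gamma$ it finds an element $y\in\Gamma'\setminus p\Gamma$, shows that $y$ can be swapped into a minimal generating set of $\Gamma$ (so that $\Gamma/\langle y\rangle$ has rank at most $r-1$), and then applies the inductive hypothesis to $\Gamma'/\langle y\rangle$ inside $\Gamma/\langle y\rangle$. Your argument instead identifies $\rank\Gamma=\dim_{\F_p}\Gamma[p]$ via the structure theorem and uses that $\Gamma'[p]$ is an $\F_p$-subspace of $\Gamma[p]$.

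Your approach is cleaner and more conceptual: it isolates the single invariant $\Gamma[p]$ that both detects rank and is manifestly monotone under passage to subgroups. The paper's inductive argument is more hands-on and avoids invoking the structure theorem explicitly, making it slightly more self-contained; it also foreshadows the kind of element-swapping used elsewhere in the appendix. One small caution: your parenthetical remark that one could ``equivalently'' work with the Frattini quotient $\Gamma/p\Gamma$ is fine for the identification $\rank\Gamma=\dim_{\F_p}(\Gamma/p\Gamma)$, but note that the natural map $\Gamma'/p\Gamma'\to\Gamma/p\Gamma$ need not be injective (e.g.\ $\Gamma'=p\Z/p^2\Z$ in $\Gamma=\Z/p^2\Z$), so the subgroup comparison really does need $\Gamma[p]$ rather than $\Gamma/p\Gamma$. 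Your main argument uses $\Gamma[p]$, so this does not affect its validity.
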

\begin{lemma}\label{lem:burnside.basis}
Let $\Gamma$ be a $p$-group of rank $r$ and suppose that $S$ is a generating set for $\Gamma$. Then there is a subset $S'\subset S$ of cardinality $r$ that also generates $\Gamma$.
\end{lemma}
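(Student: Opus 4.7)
I plan to prove Lemma~\ref{lem:burnside.basis} via the \emph{Frattini subgroup} $\Phi(\Gamma)$, defined as the intersection of all maximal proper subgroups of $\Gamma$. Since $\Gamma$ has rank $r$, it is finitely generated, so Zorn's lemma guarantees that every proper subgroup is contained in a maximal one and in particular that $\Phi(\Gamma)$ is a well-defined proper characteristic subgroup. The overall strategy is the classical Burnside basis theorem: reduce to linear algebra in the elementary abelian quotient $\Gamma/\Phi(\Gamma)$, and transport a basis back to $\Gamma$.

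The first ingredient is to show that $\Gamma/\Phi(\Gamma)$ is an elementary abelian $p$-group, and hence may be viewed as an $\F_p$-vector space. The standard argument runs as follows: every maximal subgroup $M$ of a $p$-group has index $p$ and is normal, which one proves by a short induction on order using the fact that a finite $p$-group has non-trivial centre. Consequently $\Gamma/\Phi(\Gamma)$ embeds diagonally in the product of the quotients $\Gamma/M$ taken over all maximal subgroups $M$, each of which is cyclic of order $p$.

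The second ingredient is the \emph{non-generator property}: a subset $T \subseteq \Gamma$ generates $\Gamma$ if and only if the image of $T$ in $\Gamma/\Phi(\Gamma)$ spans. Necessity is immediate; for sufficiency, if $\langle T \rangle$ were proper it would lie in some maximal subgroup $M$, so the image of $T$ in $\Gamma/M$ would be trivial, and since $\Gamma/M$ is a quotient of $\Gamma/\Phi(\Gamma)$ this would contradict the spanning hypothesis. Applying this to any minimal generating set of $\Gamma$ shows that the rank of $\Gamma$ coincides with $\dim_{\F_p}(\Gamma/\Phi(\Gamma))$, which by hypothesis equals $r$.

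Given these two ingredients, the lemma falls out. The image $\bar{S}$ of $S$ in the $r$-dimensional $\F_p$-vector space $\Gamma/\Phi(\Gamma)$ is a spanning set, so one may extract $s_1,\ldots,s_r \in S$ whose images form a basis by the usual exchange argument. Then $S' := \{s_1,\ldots,s_r\}$ has cardinality $r$, and since the image of $S'$ spans $\Gamma/\Phi(\Gamma)$, the non-generator property yields $\langle S' \rangle = \Gamma$. The only genuine technical point is the proof of the non-generator property, which rests on the existence of maximal subgroups above an arbitrary proper subgroup (Zorn) and on the fact that maximal subgroups of a $p$-group have prime index; neither is deep, but together they are what makes the Frattini machinery go.
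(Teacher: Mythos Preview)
Your proposal is correct and follows essentially the same approach as the paper: both deduce the lemma from the Burnside basis theorem via the Frattini subgroup. The paper simply cites the Burnside basis theorem from the literature (defining $\Phi(\Gamma)=\Gamma^p[\Gamma,\Gamma]$ and quoting Khukhro) and observes that it forces all minimal generating sets to have the same cardinality, whereas you take the equivalent definition of $\Phi(\Gamma)$ as the intersection of maximal subgroups and actually sketch the proof of the theorem; the content is the same.
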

\begin{remark}Lemma \ref{lem:ab.subgroup.rank} still holds without the assumption that $\Gamma$ is a $p$-group. The corresponding statement for an arbitrary finite abelian group can be deduced from the $p$-group statement using arguments similar to those found in Section \ref{sec:img.of.multi.hom}. We do not need the more general statement in the present work, however, and so we omit the details. The assumption that $\Gamma$ is abelian, on the other hand, is necessary. For example, if $F$ is the free product of $n$ copies of the cyclic group with two elements and $\Gamma$ is the quotient $F/[F,[F,F]]$ then $\Gamma$ is of rank $n$ but $[\Gamma,\Gamma]$, for example, is of rank $n(n-1)/2$.
\end{remark}
\begin{remark}\label{rem:burnside.basis}Lemma \ref{lem:burnside.basis} does not necessarily hold if $\Gamma$ is not a $p$-group. For example, the set $\{2,3\}$ is a generating set for $\Z/6\Z$, but each of $\{2\}$ and $\{3\}$ generates a proper subgroup.
\end{remark}
The proof of Lemma~\ref{lem:ab.subgroup.rank} is straightforward. 
\begin{proof}[Proof of Lemma~\ref{lem:ab.subgroup.rank}]
\begin{sloppypar}Suppose that $\Gamma$ is of rank $r$, with minimal generating set $x_1,\ldots,x_r$, say. We will show by induction on $r$ that the rank of $\Gamma'$ is at most $r$, this being trivial when $r=0$.\end{sloppypar}

Let $m$ be the largest integer with the property that $\Gamma'<p^m\cdot\Gamma$. The subgroup $p^m\cdot\Gamma$ is generated by the set $p^mx_1,\ldots,p^mx_r$, and so has rank at most $r$. Therefore, upon replacing $\Gamma$ by $p^m\cdot\Gamma$ if necessary, we may assume that $\Gamma'$ is not contained in $p\cdot\Gamma$, and hence that there is some $y\in\Gamma'$ such that $px\ne y$ for every $x\in\Gamma$.

Since $x_1,\ldots,x_r$ generate $\Gamma$ we may write $y=l_1x_1+\ldots+l_rx_r$, and by definition of $y$ there must be at least one $i$ for which $l_i$ is not divisible by $p$. Without loss of generality we shall assume that $l_r$ is not divisible by $p$. Since the order of $x_r$ is a power of $p$, the integer $l_r$ has a multiplicative inverse, say $q$, modulo the order of the $x_r$. This implies that
\[
x_r=q(y-l_1x_1-\ldots-l_{r-1}x_{r-1}),
\]
and in particular that the quotient $\Gamma/\langle y\rangle$ is generated by $x_1+\langle y\rangle,\ldots,x_{r-1}+\langle y\rangle$, and hence has rank at most $r-1$. The group $\Gamma'/\langle y\rangle$ is isomorphic to a subgroup of $\Gamma/\langle y\rangle$, and hence has rank at most $r-1$ by induction. It follows that $\Gamma'$ has rank at most $r$, as claimed.
\end{proof}
The proof of Lemma~\ref{lem:burnside.basis} rests on a deeper property of $p$-groups, the statement of which requires a definition.
\begin{definition}[Frattini subgroup]Let $\Gamma$ be a finite $p$-group. Then the \emph{Frattini subgroup} $\Phi(\Gamma)$ of $\Gamma$ is defined by $\Phi(\Gamma):=\Gamma^p[\Gamma,\Gamma]$, where here (unlike elsewhere in this work) the notation $\Gamma^p$ means the group $\langle\,g^p\,|\,g\in\Gamma\,\rangle$.
\end{definition}
\begin{theorem}[Burnside basis theorem {\cite[Theorem 4.8]{khu}}]\label{thm:burnside.basis}Let $\Gamma$ be a $p$-group and write $\Phi(\Gamma)$ for its Frattini subgroup. Then a subset $S$ of $\Gamma$ generates $\Gamma$ if and only if the image of $S$ in $\Gamma/\Phi(\Gamma)$ generates $\Gamma/\Phi(\Gamma)$ as a vector space over $\F_p$.
\end{theorem}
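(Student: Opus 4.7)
The plan is to deduce Theorem~\ref{thm:burnside.basis} from the identification $\Phi(\Gamma) = \bigcap_{M\text{ maximal}} M$, which is the standard route. The forward implication of the `if and only if' is essentially trivial: writing $\pi \colon \Gamma \to \Gamma/\Phi(\Gamma)$ for the projection, any generating set of $\Gamma$ maps to a generating set of $\Gamma/\Phi(\Gamma)$, and the quotient is automatically an $\F_p$-vector space since the definition $\Phi(\Gamma) = \Gamma^p[\Gamma,\Gamma]$ forces it to be abelian and of exponent dividing $p$.

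The substance is the converse, and the key lemma for it is that every maximal subgroup $M$ of a finite $p$-group $\Gamma$ is normal of index $p$. I would prove this by invoking the normalizer-growth property of finite $p$-groups, namely that $N_\Gamma(M) \supsetneq M$ whenever $M$ is a proper subgroup; this is proved by a standard induction on $|\Gamma|$ using the non-triviality of the centre of a finite $p$-group (equivalently, using the nilpotence of $\Gamma$). Maximality of $M$ then forces $N_\Gamma(M) = \Gamma$, so $M \triangleleft \Gamma$, and $\Gamma/M$ is a non-trivial $p$-group with no proper non-trivial subgroup, hence of order $p$. In particular $\Gamma/M$ is abelian of exponent $p$, so $\Phi(\Gamma) \subseteq M$, giving $\Phi(\Gamma) \subseteq \bigcap_M M$. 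For the reverse containment, I would pass to the $\F_p$-vector space $\Gamma/\Phi(\Gamma)$: any nonzero vector lies outside some hyperplane, and the preimage of a hyperplane under $\pi$ is a maximal subgroup of $\Gamma$ avoiding the chosen element.

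Granted this identification of $\Phi(\Gamma)$, the converse direction of the theorem is a one-line contradiction. Assume $\pi(S)$ spans $\Gamma/\Phi(\Gamma)$ as an $\F_p$-vector space; this is equivalent to $H\Phi(\Gamma) = \Gamma$, where $H = \langle S\rangle$. If $H \ne \Gamma$ then $H$ is contained in some maximal subgroup $M$, whence $H\Phi(\Gamma) \subseteq M \ne \Gamma$, a contradiction. Hence $H = \Gamma$, completing the proof.

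I expect the main obstacle to be the normalizer-growth lemma: once that input is in place, the rest is linear algebra in the quotient $\Gamma/\Phi(\Gamma)$ together with a direct subgroup-containment argument, neither of which presents any genuine difficulty.
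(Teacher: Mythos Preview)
Your argument is correct and follows the standard route to the Burnside basis theorem. Note, however, that the paper does not actually prove this result: it is quoted from \cite[Theorem 4.8]{khu} and used as a black box to deduce Lemma~\ref{lem:burnside.basis}. So there is no proof in the paper to compare against.

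One minor observation on efficiency: for the converse implication you only need the inclusion $\Phi(\Gamma)\subset M$ for every maximal $M$, which you obtain immediately once you know maximal subgroups of a finite $p$-group are normal of index $p$. The reverse inclusion $\bigcap_M M\subset\Phi(\Gamma)$, which you establish via hyperplanes in $\Gamma/\Phi(\Gamma)$, is not required for the statement as formulated here (though it is of course part of the usual characterisation of the Frattini subgroup).
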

\begin{proof}[Proof of Lemma~\ref{lem:burnside.basis}]
The Burnside basis theorem (Theorem~\ref{thm:burnside.basis}) implies in particular that all minimal generating sets of $\Gamma$ have the same cardinality, from which Lemma~\ref{lem:burnside.basis} follows immediately.
\end{proof}

\end{document}